\theoremstyle{plain}
\newtheorem{theorem}{Theorem}[section]
\newtheorem*{main}{Theorem}
\newtheorem{lemma}[theorem]{Lemma}
\newtheorem{cor}[theorem]{Corollary}
\newtheorem{prop}[theorem]{Proposition}
\newtheorem{claim}[theorem]{Claim}
\theoremstyle{definition}
\newtheorem{defn}[theorem]{Definition}
\newtheorem{example}[theorem]{Example}
\newtheorem{remark}[theorem]{Remark}
\newtheorem{notation}[theorem]{Notation}
\numberwithin{equation}{section}
\newcommand{\ba}{\backslash}
\newcommand{\Q}{\mathbb{Q}}
\newcommand{\R}{\mathbb{R}}
\newcommand{\C}{\mathbb{C}}
\newcommand{\Z}{\mathbb{Z}}
\newcommand{\F}{\mathbb{F}}
\newcommand{\ov}{\overline}
\newcommand{\val}{\operatorname{val}}
\definecolor{ltgreen}{rgb}{0.0, 0.5, 0.0}
\definecolor{dkgreen}{rgb}{0.0, 0.42, 0.24}
\begin{document}
\pagestyle{headings}

\author{Elizabeth Mili\'{c}evi\'{c}}
\title[Maximal Newton points and the quantum Bruhat graph]{Maximal Newton points and the quantum Bruhat graph}
\address{Department of Mathematics \& Statistics, Haverford College, Haverford, PA, 19041, USA}
\email{emilicevic@haverford.edu}

\thanks{This work was partially supported by Simons Collaboration Grant 318716, Australian Resarch Council Grant DP130100674, National Science Foundation Grant 1600982, and the Max-Planck-Institut f\"{u}r Mathematik.}

\begin{abstract}
We discuss a surprising relationship between the partially ordered set of Newton points associated to an affine Schubert cell and the quantum cohomology of the complex flag variety.  The main theorem provides a combinatorial formula for the unique maximum element in this poset in terms of paths in the quantum Bruhat graph, whose vertices are indexed by elements in the finite Weyl group.  Key to establishing this connection is the fact that paths in the quantum Bruhat graph encode saturated chains in the strong Bruhat order on the affine Weyl group.  This correspondence is also fundamental in the work of Lam and Shimozono establishing Peterson's isomorphism between the quantum cohomology of the finite flag variety and the homology of the affine Grassmannian.  One important geometric application of the present work is an inequality which provides a necessary condition for non-emptiness of certain affine Deligne-Lusztig varieties in the affine flag variety.
\end{abstract}

\keywords{Newton polygon, affine Weyl group, affine Deligne-Lusztig variety, Mazur's inequality, flag variety, quantum cohomology, quantum Bruhat graph}
\subjclass[2010]{ Primary 20G25, 11G25; Secondary 20F55, 14N15, 06A11}

\maketitle

\section{Introduction}\label{S:intro}

This paper investigates connections between the geometry and combinatorics in two different, but surprisingly related contexts:  certain subvarieties of the affine flag variety in characteristic $p>0$ and the quantum cohomology of the complex flag variety. The main results establish explicit relationships among fundamental questions in both theories, using paths in the quantum Bruhat graph as the primary dictionary. We begin with a brief historical survey of these two geometric contexts in order to frame the informal statement of the main theorem.

\subsection{Newton polygons}

In the 1950s, Dieudonn\'{e} introduced the notion of isocrystals over perfect fields of characteristic $p>0$ (see \cite{Man}), which Grothendieck extended to families of $F$-crystals in \cite{Gro}.  Isogeny classes of $F$-crystals are indexed by combinatorial
objects called \emph{Newton polygons}, a partially ordered set of lattice polygons in the plane.  Kottwitz used the machinery of algebraic groups to explicitly study the set of Newton points associated to any connected reductive group $G$ over a discretely valued field in \cite{KotIsoI, KotIsoII}. In particular, he observed that there is a natural bijection between the set of Frobenius-twisted conjugacy
classes in $G$ and a suitably generalized notion of the set of Newton polygons.  The poset of Newton
points in the context of reductive group theory has interesting combinatorial and Lie-theoretic
interpretations, which were first described in \cite{Ch}.  For example, Chai proves that the poset of Newton points is ranked; \textit{i.e.},
any two maximal chains have the same length. In addition to the classification of $F$-crystals and Frobenius-twisted conjugacy classes, modern interest in the poset of Newton points is motivated by geometric applications to the study of two important families of varieties in arithmetic algebraic geometry: affine Deligne-Lusztig varieties and Shimura varieties; see \cite{GHKRadlvs, RapShimura}.

\subsection{Quantum cohomology}

Independently, physicists working in the field of superstring theory in the early 1990s observed that certain algebraic varieties seemed to present natural vacuum solutions to superstring equations, and thus developed a theory of \emph{quantum cohomology}; see \cite{WittenGrass}. Using the notion of mirror symmetry, they were able to employ this cohomological framework to calculate the number of rational curves of a given degree on a general quintic hypersurface in projective 4-space; see \cite{CanPhysics}. Mathematicians first rigorously worked out the structure of the quantum cohomology ring for the Grassmanian variety of $k$-planes in $\C^n$, and initial mathematical applications were also to enumerative geometry; see \cite{BertDask, Bert, SiebTian}. Modern mathematical interest focuses on concretely understanding the structure of the quantum cohomology ring for any homogeneous variety $G/P$, where $G$ is a complex reductive algebraic group and $P$ a parabolic subgroup. More precisely, the ring $QH^*(G/P)$ has a basis of Schubert classes, indexed by elements of the corresponding Weyl group.  The driving question is then to find non-recursive, positive combinatorial formulas for expressing the quantum product of two Schubert classes in terms of this basis.  Immediate applications include statistics about mapping projective curves to $G/P$ satisfying various incidence conditions, but the impact now extends beyond enumerative geometry into many other aspects of algebraic geometry, combinatorics, representation theory, number theory, and also back to physics.

\subsection{Main theorem and applications}\label{S:MainInformal}

\begin{figure}
\begin{center}
 \resizebox{4.25in}{!}
{
\begin{overpic}{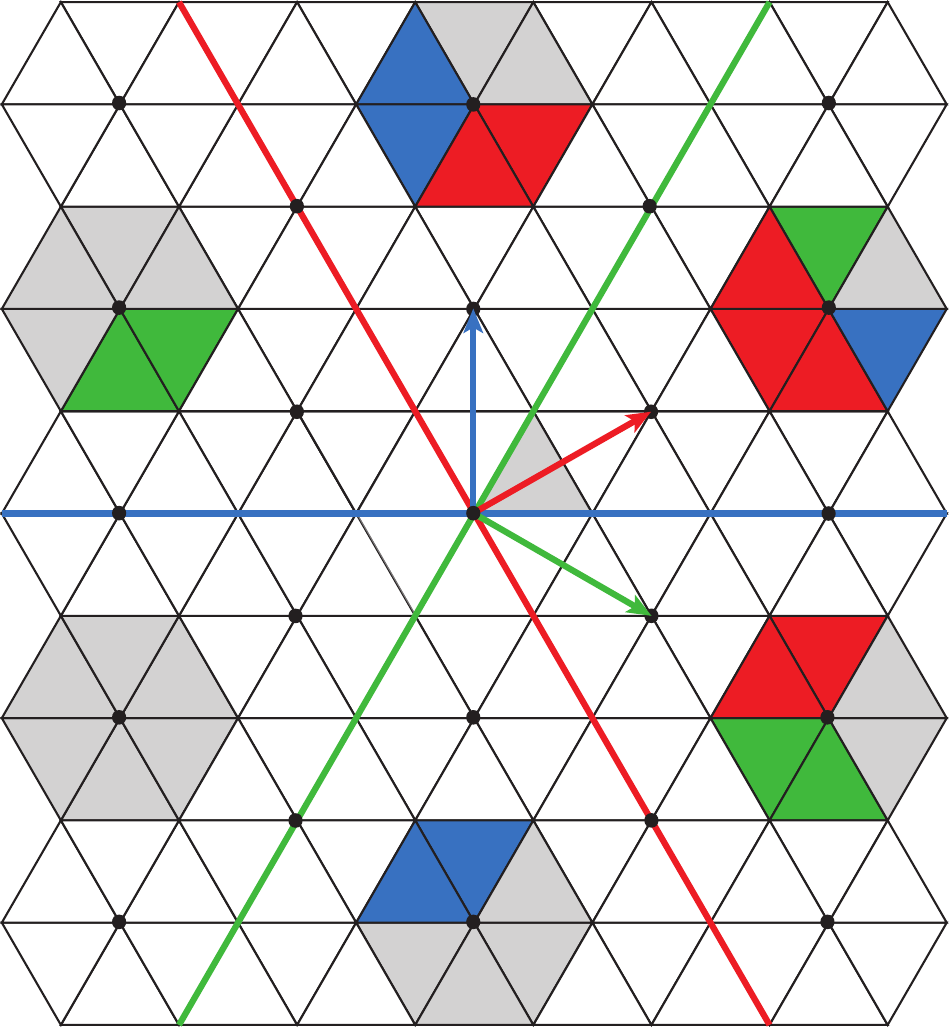}
\put(84.5,71){\bf \huge{$t^{\lambda}$}}
\put(49,91){\bf \huge{$t^{s_1\lambda}$}}
\put(83,31){\bf \huge{$t^{s_2\lambda}$}}
\put(14,71){\bf \huge{$t^{s_{12}\lambda}$}}
\put(48.5,11){\bf \huge{$t^{s_{21}\lambda}$}}
\put(14,31){\bf \huge{$t^{w_0\lambda}$}}
\put(48,71.5){\bf \huge{$\alpha_2^\vee$}}
\put(65,41.5){\bf \huge{$\alpha_1^\vee$}}
\put(65,61){\bf \huge{$\rho^\vee$}}
\end{overpic}
}
\caption{Compute the maximum Newton point for $x=t^{v\lambda}w$ by subtracting the coroot of the same color from $\lambda$. A gray alcove requires no correction factor.}\label{fig:SL3MaxNPs}
\end{center}
\end{figure}

The main result in this paper provides a closed combinatorial formula for the maximum element in the poset of Newton points associated to a fixed affine Schubert cell.  We express this formula in terms of paths in the \emph{quantum Bruhat graph}, a directed graph with vertices indexed by the finite Weyl group and weights given by the reflections used to get from one element to the other; see Figure \ref{fig:S_3QBG} and Section \ref{S:QBGdef} for an example and the formal definition. We illustrate the main theorem in the case of $G=SL_3$ in Figure \ref{fig:SL3MaxNPs}, and we informally state our main result below.  For the precise statements, see Theorem \ref{T:main} and Corollary \ref{T:MainCor}.

\begin{main}\label{T:MainWords}
Let $x = t^{v\lambda}w$ be an element of the affine Weyl group, where $w$ denotes the finite part of $x$, and $v$ the Weyl chamber in which $x$ lies. Suppose that $x$ is suitably far from the walls of any Weyl chamber.  Then the maximum Newton point in the affine Schubert cell indexed by $x$ is given by taking the coroot $\lambda$ and subtracting the weight of any path of minimal length in the quantum Bruhat graph from $w^{-1}v$ to $v$.
\end{main}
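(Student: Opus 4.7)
The plan is to exploit the correspondence, central to Lam and Shimozono's proof of Peterson's isomorphism, between saturated chains in the strong Bruhat order on the affine Weyl group and paths in the quantum Bruhat graph. First, I would reduce the computation of the maximum Newton point in the affine Schubert cell of $x = t^{v\lambda}w$ to understanding which pure translation elements $t^\mu$ appear in the closure of this cell, since the Newton point of a pure translation is simply the unique dominant representative of $\mu$ in its $W$-orbit. Combined with Chai's theorem that the poset of Newton points is ranked, this means the maximum is achieved by locating the $t^\mu$ whose dominant conjugate is largest in the dominance order.

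Next, I would translate saturated chains in the affine Bruhat order descending from $x$ to a translation $t^\mu$ into directed paths in the quantum Bruhat graph. Under this dictionary, each covering relation either preserves the coroot part (a Bruhat edge) or contributes a coroot (a quantum edge), so over the whole chain $\mu$ differs from $v\lambda$ by exactly the sum of coroots labeling the quantum edges, i.e., the weight of the corresponding path. Keeping careful track of indices and how the translation $v\lambda$ interacts with the finite part $w$, one sees that the relevant QBG path runs from $w^{-1}v$ to $v$, matching the informal statement.

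To maximize the dominant representative of $\mu$, one must minimize the correction subtracted from $\lambda$, and hence select a path of minimal length in the QBG from $w^{-1}v$ to $v$; subtracting its weight from $\lambda$ gives the formula asserted. A subsequent step verifies that the resulting element is not merely locally maximal but the unique global maximum of the poset; this should follow from the fact that any two minimal-length QBG paths between the same endpoints share the same weight, a property already used by Lam and Shimozono.

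The main obstacle will be justifying that a minimal-length path in the QBG actually lifts to a saturated chain in the affine Bruhat order ending at a genuine translation element, and that the dominant representative of the resulting $\mu$ equals $\lambda$ minus the path weight rather than something smaller. Both points depend essentially on the hypothesis that $x$ lies suitably far from the walls: when $v\lambda$ is deep inside its Weyl chamber, each reflection labeling a QBG edge can be realized by a true affine Bruhat cover without the translation part prematurely crossing into a neighboring chamber, and the computation of the dominant conjugate is transparent. Controlling this boundary behavior, both to rule out unexpected shorter paths and to guarantee the lift to the affine side exists, will be the technical heart of the argument.
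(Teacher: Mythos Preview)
Your overall architecture is right: invoke the Lam--Shimozono cocover correspondence, lift minimal QBG paths from $w^{-1}v$ to $v$ to saturated chains in affine Bruhat order terminating at translations, and use Postnikov's uniqueness of the weight of minimal paths. The lifting step you flag as the ``technical heart'' is in fact routine once superregularity is in place (this is the paper's Proposition~\ref{T:QBGpaths}).

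The genuine gap is the reduction to pure translations, which you assert but do not justify. Your appeal to Chai's rankedness of $\mathcal{N}(G)$ is a red herring: that result concerns the full Newton poset, not $\mathcal{N}(G)_x$, and in any case rankedness says nothing about where the maximum of $\{\nu(y):y\le x\}$ is attained. The actual starting point is Viehmann's formula $\nu_x=\max\{\nu(y):y\le x\}$, and the hard step is to rule out that $\nu(x)$ itself, or the Newton point of some other non-translation $y\le x$, dominates every $\mu^+$ coming from a translation $t^\mu\le x$. A priori the element $x$ has a perfectly good Newton point $\nu(x)$, and nothing in your outline excludes $\nu(x)\ge\mu^+$.

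The paper handles this by an entirely separate convexity argument (its Section~\ref{S:RootHyps}): one projects $\lambda$ onto a wall $H_{\alpha_i}$ of the dominant chamber to obtain an upper bound $\nu_i(x)\ge\nu(x)$, constructs a linear functional $F_i$ detecting the hyperplane $H_{\widehat{\alpha}_i}$ spanned by $\Delta\setminus\{\alpha_i\}$, and then shows via a direct estimate (this is where the superregularity constant $M_k$ really enters) that $F_i(\mu^+)>F_i(\nu_i(x))$. By Atiyah--Bott convexity this forces $\mu^+\notin\operatorname{Conv}(W\nu_i(x))$, hence $\nu_i(x)\not\ge\mu^+$, hence $\nu(x)\not\ge\mu^+$. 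An induction on $k$ then propagates this through all $y<x$. Without something playing the role of this estimate, your argument does not establish that the maximum is attained at a translation.
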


This result answers a question about Newton points, which are connected to affine Deligne-Lusztig varieties, but the proof makes heavy use of the tools developed to study quantum Schubert calculus.  As such, we are also able to prove corollaries in each of these two fields, albeit also under the hypothesis that $x$ lies suitably far from the walls of any Weyl chamber.  Corollary \ref{T:QSchNP} illustrates that our question about the maximum Newton point is equivalent to determining the minimum monomial occurring in any quantum product of two Schubert classes in $G/B$ over $\C$.  On the affine side, Corollary \ref{T:ADLVapp} proposes an analog of Mazur's inequality for the affine flag variety, placing a sharp upper bound on the Newton point for $b$ in the context in which the affine Deligne-Lusztig variety $X_x(b)$ is non-empty.  For readers primarily interested in applications to Shimura varieties, we point out that this analog of Mazur's inequality holds equally well in the $p$-adic context; see Remark \ref{R:padic} for details.  While the cases in which $b$ is basic and/or $x$ lies close to the walls of the Weyl chambers are typically the most important for extracting applications to Shimura varieties, the combinatorics developed in this paper nevertheless yields interesting geometric information about certain Newton strata; see Section \ref{S:Future}.

\subsection{Future directions}\label{S:Future}

The poset $\mathcal{N}(G)_x$ of Newton points associated to the affine Schubert cell indexed by $x=t^{v\lambda}w$ perfectly detects the non-emptiness pattern for any affine Deligne-Lusztig variety $X_x(b)$.  The main theorem thus says that the quantum Bruhat graph senses non-emptiness of $X_x(b)$ when the element $b \in G(F)$ has the largest possible Newton point.  Jointly with Schwer and Thomas in \cite{MST1}, the author has developed machinery involving labeled folded alcove walks and root operators which is effective for predicting non-emptiness for elements $b$ with Newton points which have integral slopes and lie below $\lambda -2\rho$.  Although each of the two techniques presents different challenges when $x$ lies outside of the ``shrunken'' Weyl chambers, one might hope that an interpolation between these two methods might result in a complete picture for the non-emptiness problem, complementing our detailed knowledge of the basic case established in \cite{GHN}. In joint work with Viehmann, the author identifies a family of elements $x$ such that the Newton poset $\mathcal{N}(G)_x$ is saturated, from which one deduces geometric information: formulas for codimensions of the Newton strata in the affine Schubert cell $IxI$, as well as their equidimensionality \cite{MilVie}.

More mystifying are the connections which arise between the main theorem and quantum Schubert calculus.  Although Peterson established an isomorphism between suitable localizations of the equivariant homology of the affine Grassmannian and the equivariant quantum cohomology of the complete flag variety \cite{Pet}, it remains to explain the precise relationship of these cohomology theories to the geometry of other subvarieties of the affine flag variety, especially in characteristic $p>0$.  For example, this paper shows that the maximal element of the poset of Newton points $\mathcal{N}(G)_x$ and the minimal monomial $q^d$ in the quantum product of two Schubert classes are governed by exactly the same combinatorial information---it would therefore be natural to explore whether the posets share other elements besides these extrema. 

Since Lam and Shimozono's proof of the Peterson isomorphism in \cite{LS}, many other applications of the ``quantum equals affine'' phenomenon have been discovered.  We highlight several results beyond Schubert calculus which have made similar critical use of the connection between alcove walks and the quantum Bruhat graph. In a series of papers, Lenart, Naito, Sagaki, Schilling, and Shimozono compute the energy function on tensor products of certain Kirillov-Reshetikhin crystals in terms of the parabolic quantum Bruhat graph; see \cite{LNSSS2}.  Feigin and Makedonskyi describe the representation theory of generalized Weyl modules in terms of a generating function on quantum alcove paths \cite{FM}, based on a similar formula of Orr and Shimozono for a specialization of nonsymmetric Macdonald polynomials \cite{OS}.  More recently, Naito and Watanabe proved a combinatorial formula for periodic $R$-polynomials in terms of paths in a doubled quantum Bruhat graph \cite{WN}.  These $R$-polynomials can be used to compute periodic Kazhdan-Lusztig polynomials, which conjecturally determine the characters of irreducible modules of a reductive group over a field of positive characteristic.  It would be interesting to understand the geometric and/or representation-theoretic phenomena which cause the answers to these seemingly different questions to be governed by the same combinatorics.

Finally, it is our hope that the combinatorics community in particular might invigorate new ideas regarding the poset of Newton points studied here, perhaps at least in the case of $G=GL_n$ in which the setup is quite combinatorial.  For this reason, we make an effort to be very concrete throughout Section \ref{S:NPs} in our discussion of the Newton map, mentioning open problems along the way.  Regarding the poset $\mathcal{N}(G)_x$ of Newton points associated to an affine Weyl group element $x$, besides the results in this paper on maximal elements, its minimal elements are known \cite{VieMinNP}, as well as some of its integral elements \cite{MST1} and non-integral elements \cite{HeCordial}.  Beyond groups of low rank, however, we have not yet established even the most basic desirable poset properties, such as whether or not it is ranked, a lattice, shellable, or for which $x$ it is a subinterval of the poset of all Newton points for $G$.  Of course, the ideal goal would then be to apply such poset combinatorics back to the geometry of the associated varieties in characteristic $p>0$.

\subsection{Overview of the paper}

This paper is written with two potentially disjoint audiences in mind: those interested in arithmetic geometry and the Newton map, and those interested in combinatorics and quantum Schubert calculus.  After establishing notation, we thus open in Section \ref{S:NPs} with an elementary review of Newton polygons, presenting explicit formulas in many special cases along with examples.  The reader familiar with reductive groups over local fields and the Newton map can safely skip to Section \ref{S:MaxNPs} for a refresher on the maximum Newton point.  Section \ref{S:QBG} begins with a review of the quantum Bruhat graph and its main combinatorial properties.  Experts in quantum Schubert calculus can move straight into Section \ref{S:MainPrecise}, which contains a precise statement of the main result as Theorem \ref{T:main}, continuing directly to Section \ref{S:qSch} for the main application to quantum cohomology.  The main application to affine Deligne-Lusztig varieties is then presented and proved in Section \ref{S:ADLVs}. 

The remainder of the paper is dedicated to the proof of Theorem \ref{T:main}.  We start by invoking an alternative combinatorial formula of Viehmann for the maximum Newton point from \cite{VieTrunc}, which is expressed in terms of both affine Bruhat order and dominance order; see Theorem \ref{T:Vform}.  The connection to quantum Schubert calculus is then made by using an observation of Lam and Shimozono in their proof of the Peterson isomorphism \cite{LS}, which places covering relations in affine Bruhat order in two-to-one correspondence with edges in the quantum Bruhat graph.  In the remainder of Section \ref{S:QBGChains}, we iterate Proposition \ref{T:cocover}, stitching the relations from this correspondence together to form saturated chains.  Section \ref{S:RootHyps} then lays the groundwork to compare the Newton points for all of the elements lying below a given $x$ in Bruhat order.  This section represents the technical heart of the paper, involving careful combinatorics on root hyperplanes in order to bound the maximum Newton point from below.  We put the resulting sequence of lemmas to work in Section \ref{S:TransPf}, making the primary reduction step to considering only pure translations less than $x$.  The proof of the main theorem then follows immediately in Section \ref{S:MainPf}.  We conclude in Section \ref{S:HypRmks} with a discussion of the role of the superregularity hypothesis in the proof of Theorem \ref{T:main}.

\subsection*{Acknowledgments.}
The author thanks John Stembridge and Thomas Lam for  instrumental conversations during the conception and development of this project.  Part of this work was conducted during a visit to the University of Melbourne, and the author thanks Arun Ram for fostering an exceptionally mathematically stimulating environment.  This paper was completed during the a stay at the Max-Planck-Institut f\"{u}r Mathematik, and the author wishes to gratefully acknowledge the institute for its excellent working conditions.  The author is graciously indebted to the anonymous referees for their careful readings which resulted in opportunities to clarify, correct, and simplify many parts of the paper.  A number of these revisions were implemented while the author served as a Director's Mathematician in Residence at the Budapest Semesters in Mathematics program.

This paper represents the full version of extended abstract \cite{BeMaxNPsFPSAC}, which was published in the proceedings of the $24^{\text{th}}$ International Conference on ``Formal Power Series and Algebraic Combinatorics''.  Since that announcement, the statements have been sharpened, and the conventions were also changed to make clearer the correspondence to alcoves in the affine hyperplane arrangement.  Spatial constraints on the extended abstract permitted only a brief outline of the proof, whereas this paper contains all of the details and a full discussion of the subtleties involved in the argument.


\section{The Poset of Newton Points}\label{S:NPs}

After we establish some basic notation for root systems and Weyl groups, the primary purpose of this section is to introduce the Newton map and discuss some of the basic properties of partially ordered sets of Newton points.  The formal definition is fairly abstract, and so we offer the reader three alternative ways to think about constructing the Newton polygon for an element of $GL_n$.  We discuss the most important special case alongside the general definition in Section \ref{S:NPsDef}.  Each method we review poses its own computational challenges, but an implicit common theme is the need to choose a suitable representative of each $\sigma$-conjugacy class in $G(F)$.  We do not intend to provide a comprehensive survey of this subject, nor do we provide an exhaustive list of constructions for the Newton point, even in the special case of $GL_n$.  Rather, our goal is to provide some general exposure to the methods, particularly for the audience more interested in the combinatorial aspects.

\subsection{Notation}\label{S:notation}

Let $G$ be a split connected semisimple algebraic group of rank $r$, and $B$ a fixed Borel subgroup with $T$ a maximal torus in $B$.  Let $R = R^+ \sqcup R^-$ be the corresponding set of roots, which can be viewed as a subset of the group of characters $X^*(T)$. Note that $R$ is reduced since $G$ is semisimple. Denote the set of positive integers $\{1, \dots, r\}$ by $[r]$. The set $\Delta = \{ \alpha_i \mid i \in [r] \}$ is an ordered basis of simple roots in $X^*(T)$, and $\{\alpha_i^{\vee} \mid i \in [r]\}$ a basis for $R^\vee$ of simple coroots in $X_*(T)$, which are dual with respect to the pairing $\langle \cdot , \cdot \rangle: X_*(T) \times X^*(T) \rightarrow \Z$. The parabolic subgroups $P$ containing $B$ are in bijection with subsets $\Delta_P$ of $\Delta$, each of which has an associated root system $R_P$ consisting of the roots of the Levi factor. The finite Weyl group $W$ is the quotient $N_G(T)/T$.  Denote by $r_{\alpha}$ the reflection in $W$ corresponding to the root $\alpha \in R$, and then write $s_i$ for the simple reflection in $W$ corresponding to the simple root $\alpha_i \in \Delta$.  There is a natural action of $W$ on $R$ which induces a permutation on the set of reflections in $W$ via $r_{v\alpha} = vr_\alpha v^{-1}$ for any $\alpha \in R$ and $v \in W$.  Define $\rho$ to be the half-sum of the positive roots.

Denote by $Q = \bigoplus \Z \alpha_i$ and $Q^\vee = \bigoplus \Z \alpha_i^\vee$ the root and coroot lattices, respectively.  We also occasionally need the coweight lattice $P^\vee = \bigoplus \Z \omega_i^\vee \supset X_*(T)$ spanned by the fundamental coweights $\omega_i^\vee$, which are dual to the simple roots $\alpha_i$.  Define $\rho^\vee$ to be the sum of the fundamental coweights. The finite Weyl group $W$ acts on $\R^r \cong Q^\vee \otimes_{\Z}\R$ as a finite reflection group. Denote by $H_{\alpha}$ the hyperplane in $\R^r$ orthogonal to the root $\alpha$, which is the reflecting hyperplane corresponding to $r_\alpha$ in this representation.  We say that $\lambda \in P^{\vee}$ is dominant if $\langle \lambda, \alpha_i \rangle \geq 0$ for all $\alpha_i \in \Delta$. Denote by $Q^+$ and $P^+$ the set of dominant elements of $Q^\vee$ and $P^\vee$, respectively.  By $\lambda^+$ we mean the unique dominant coroot in the $W$-orbit of $\lambda \in Q^{\vee}$. More generally, define the (closed) dominant Weyl chamber as
\begin{equation}
C = \{\lambda \in \R^r \mid \langle \lambda, \alpha \rangle \geq 0,\ \forall \alpha \in R^+\}.
\end{equation}
Dominance order forms a natural partial ordering on the coroot lattice.  Given $\lambda, \mu \in Q^\vee$, we say that $\lambda \geq \mu$ if and only if $\lambda - \mu$ is a nonnegative linear combination of positive coroots.  We say that $\lambda \in Q^\vee$ is regular if the stabilizer of $\lambda$ in $W$ is trivial.  Following \cite{LS}, a coroot $\lambda$ is said to be \emph{superregular} if $ \langle \lambda, \alpha \rangle \geq M$ for all $\alpha \in R^+$, for some sufficiently large $M$.  In practice, the constant $M$ is fixed; see Corollary \ref{T:MainCor} for a crude approximation, and refer to Equation \eqref{E:MFormula} for a precise formula.  Although the notion of superregularity always depends on $M$, we occasionally omit reference to this constant and simply refer to $\lambda$ as superregular.

In the context of the Newton map, we will work over the discretely valued field $F = \overline{\F_q}((t))$ for $q = p^s$, which has characteristic $p >0$. We denote the discrete valuation by $\operatorname{val}: F \rightarrow \Z$, and this map picks out the smallest power of $t$ occurring with nonzero coefficient in the Laurent series; for our purposes, we define $\val(0) = -\infty$. The ring of integers in $F$ is given by the ring of formal power series $\mathcal{O} = \overline{\F_q}[[t]]$.  For any $\mu \in X_*(T)$, let $t^{\mu}$ denote the image of $t$ under $\mu: \mathbb{G}_m \rightarrow T$. We can extend the usual Frobenius automorphism $x \mapsto x^q$ on $\ov{\F_q}$ to a map $\sigma: F \rightarrow F$ by defining $\sigma$ to act on the coefficients: $\sum a_it^i \mapsto \sum a_i^qt^i$. Two elements $g_1,g_2 \in G(F)$ are said to be \emph{$\sigma$-conjugate} if there exists an $h \in G(F)$ such that $hg_1\sigma(h)^{-1} = g_2$.  

The affine Weyl group of $G(F)$ is isomorphic to the semi-direct product $\widetilde{W} = Q^{\vee} \rtimes W$, and any $x \in \widetilde{W}$ may be written as $x = t^{\lambda}w$ for some $\lambda \in Q^\vee$ and $w \in W$. If $\lambda$ is regular, then there exists a unique $v \in W$ such that $t^{\lambda}w = t^{v\lambda^+}w$.  The affine Weyl group is generated by the following affine reflections on $\R^r$:
\begin{equation}
r_{\alpha,m}(\lambda) = \lambda - (\langle \lambda, \alpha \rangle-m)\alpha^\vee.
\end{equation}
The element $r_{\alpha,m}$ is then identified with the reflection across the affine hyperplane
\begin{equation}
H_{\alpha,m} = \{ \lambda \in \R^r \mid \langle \lambda, \alpha \rangle = m\}.
\end{equation}
Note that $r_{\alpha,0} = r_\alpha$ and $H_{\alpha,0}=H_\alpha$.  The connected components of $\R^r \ba \{H_{\alpha,m} \mid \alpha \in R^+, m \in \Z \}$ are called alcoves, and we use freely the natural bijection between elements of the affine Weyl group and alcoves.  We define the base alcove to be the unique alcove in $C$ whose closure contains the origin, and the base alcove then corresponds to the identity element in $\widetilde{W}$ under this bijection.  The Iwahori subgroup $I$ of $G(F)$ is the stabilizer of the base alcove, which coincides with the  preimage of the opposite Borel subgroup of $B$ under the evaluation map $G(\overline{\F_q}[[t]]) \rightarrow G(\overline{\F_q})$ sending $t \mapsto 0$.

In terms of the isomorphism $\widetilde{W} = Q^{\vee} \rtimes W$, we can write $r_{\alpha,m} = t^{m\alpha^\vee}r_\alpha$, which acts by left multiplication as the reflection across the hyperplane $H_{\alpha,m}$. Because each affine reflection is also associated to a unique positive root in the affine Lie algebra, we typically write $r_\beta$ rather than $r_{\alpha,m}$ for brevity.  It should always be clear from context whether $r_\beta$ represents a linear or an affine reflection, especially because we typically reserve the letters $v,w$ for finite Weyl group elements and $x,y$ for affine Weyl group elements.  Finally, denote by $\ell : \widetilde{W} \rightarrow \Z_{\geq 0}$ the length function, and by $w_0$ the element of longest length in the finite Weyl group.

\begin{subsection}{Newton polygons via isocrystals}  

The notion of an isocrystal over a perfect field of characteristic $p>0$ was introduced by Dieudonn\'{e} and generalized by Grothendieck \cite{Gro}. In the classification theorem proved by Dieudonn\'{e} \cite{Dieudonne} and Manin \cite{Man}, isomorphism classes of isocrystals are naturally indexed by Newton polygons, which then became the starting point for the development of the Newton map in the context of algebraic groups by Kottwitz \cite{KotIsoI}.

\begin{defn} An \emph{isocrystal} $(V,\Phi)$ is a finite-dimensional vector space $V$ over $F$ together with a
$\sigma$-linear bijection $\Phi: V\rightarrow V$; that is, $\Phi(v+w) = \Phi(v)+\Phi(w)$ and $\Phi(av)=\sigma(a)\Phi(v)$ for all $a \in
F$ and $v,w \in V$. 
\end{defn}

A simple example of an isocrystal is $(F^n,\Phi)$, where $\Phi = A\circ \sigma$ for some $A \in GL_n(F)$, and $\sigma$ acts on $V$ coordinate-wise.  Conversely, if we fix a basis $\{e_1, \dots, e_n\}$ for $V$, then note that we can associate a matrix $A \in GL_n(F)$ to $(V,\Phi)$ defined by $\Phi(e_i) = \sum_{i=1}^n A_{ji}e_j$, in which case we write $\Phi = A\circ \sigma$ for $A = (A_{ij})$.  More generally, for $G$ any connected reductive group over $F$, the choice of an element $g \in G(F)$ together with a finite-dimensional representation of $G$ determines an isocrystal over $F$; see \cite{RR} for a discussion of $F$-isocrystals with $G$-structure, following \cite{KotIsoI}.

The Dieudonn\'{e}-Manin classification shows that the category of isocrystals over $F$ is semisimple, and that the simple objects are naturally indexed by $\Q$.  That is, any isocrystal $(V,\Phi)$ is isomorphic to a direct sum of the form $V = \oplus V_{s_i/r_i}$ where $\gcd(r_i,s_i)=1$ and the $V_{s_i/r_i}$ are simple objects in the category; for a proof of semisimplicity and a characterization of the simple isocrystals, see \cite{Dem}.  
We present our first definition of the Newton polygon using this Dieudonn\'e-Manin classification.  Although we phrase this first definition in the language of $G=GL_n$ for the sake of clarity, the construction fully extends to the context of $F$-isocrystals with $G$-structure; see \cite{KotIsoI, KotIsoII}.

\begin{defn}\label{D:NPiso} Let $(V,\Phi)$ be a finite-dimensional isocrystal over $F$.
\begin{enumerate}
\item If $V = \oplus_{i=1}^nV_{s_i/r_i}$ by the Dieudonn\'e-Manin classification, then the \emph{Newton slope sequence} for $(V,\Phi)$ is defined to be $\lambda = (\lambda_1, \dots, \lambda_n)\in  (Q^\vee \otimes_{\Z} \Q)^+,$ where $\lambda_i = s_i/r_i$, each $\lambda_i$ is repeated $r_i$ times, and $\lambda_1 \geq \dots \geq \lambda_n$.  

\item The \emph{Newton polygon} for an isocrystal $(V,\Phi)$ with Newton slope sequence $\lambda$ is the graph of the function $\overline{\nu}: [0,n] \longrightarrow \R$ given by
\begin{equation*}
\begin{cases}
\overline{\nu}(i)=0, &\text{if}\ i=0,\\
\overline{\nu}(i) = \lambda_1 + \cdots \lambda_i, &\text{if}\ i= 1, \dots, n,
\end{cases}
\end{equation*}
and then extended linearly between successive integers.  

\item If we denote by $\mathcal{N}(G)$
the set of all possible Newton polygons for isocrystals arising from elements in $G(F)$, then the \emph{Newton map} $\nu: G \longrightarrow \mathcal{N}(G)$ sends $g\in G$ to the \emph{Newton point} $\nu(g)=\lambda$ for the isocrystal corresponding to $g$.
\end{enumerate}
\end{defn}

\begin{figure}[b]
\begin{center}
 \resizebox{2.5in}{!}
{
\begin{overpic}{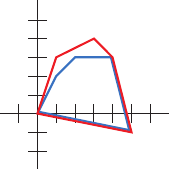}
\put(42,76){\bf \scriptsize{$\color{red} \lambda$}}
\put(42,56){\bf \scriptsize{$\color{blue} \mu$}}
\end{overpic}
}
\caption{A pair of Newton polygons for $GL_5(F)$ with slope sequences $\lambda \geq \mu$ .}\label{fig:NPfig}
\end{center}
\end{figure}

For example, the red Newton polygon in Figure \ref{fig:NPfig} corresponds to a 5-dimensional isocrystal having Newton slope sequence $\lambda = (3, \frac{1}{2}, \frac{1}{2}, -1, -4)$.  Because a Newton slope sequence uniquely determines a Newton polygon and vice versa in the case of $G=GL_n$, we occasionally refer to elements $\nu(g)$ interchangeably as both polygons in the plane and slope sequences in $(Q^\vee \otimes_{\Z}\Q)^+$.

\end{subsection}

\begin{subsection}{Newton polygons via characteristic polynomials}

We now provide a more concrete definition of the Newton polygon associated to an element $g \in GL_n(F)$ which only requires basic linear algebra; for the details, we refer the reader to \cite{Kedlaya}.  Although we do not appeal to the Dieudonn\'{e}-Manin classification here, the definition presented in this section is in fact equivalent to Definition \ref{D:NPiso}.  

Define a ring $R := F[\sigma]$ by formally adjoining the Frobenius automorphism.  Multiplication in $R$ is non-commutative, defined such that $\sigma a = \sigma(a) \sigma$ for $a \in F$. There exist both a right and left division algorithm, and so $R$ is
a principal ideal domain. Given an isocrystal $(V,\Phi)$ over $F$, identifying $\sigma^iv:=\Phi^i(v)$ makes $V$ into an $R$-module.  In fact, the Frobenius twist makes $(V,\Phi)$ into a cyclic module over the ring $R$; that is,
$Rv = V$ for some $v$ in $V$.  In this context, we call the generator $v$ a \textit{cyclic vector}.  Upon choosing a cyclic vector $v$, we may thus write $V \cong R/Rf$ for some $f =
\Phi(v)^n + a_1\Phi(v)^{n-1} \cdots + a_{n-1}\Phi(v) + a_nv \in R$ with $a_i \in F$, where $n=\mbox{dim}_F(V)$.   We call $f$ the
\emph{characteristic polynomial} associated to this isocrystal and cyclic vector $(V,\Phi,v)$.

\begin{defn}[Lemma 5.2.4 \cite{Kedlaya}]\label{T:charNP} Given an isocrystal and a choice of cyclic vector $(V,\Phi,v)$, define the associated Newton polygon as the result of the following algorithm:
\begin{enumerate}
\item Find the characteristic polynomial $f =
\Phi(v)^n + a_1\Phi(v)^{n-1} \cdots + a_{n-1}\Phi(v) + a_nv \in R$,  satisfying $V \cong R/Rf$.
\item Plot the set of points $\{(0,-\val(a_{n})),(1,-\val(a_{n-1})), \dots, (n-1,-\val(a_{1})), (n, 0) \}$ recording the negative valuations of the coefficients of this characteristic polynomial in reverse order.
\item Take the upper convex hull of this set of points; \textit{i.e.} form the tightest-fitting polygon which passes either through or above all of the plotted points.
\end{enumerate}
While the characteristic polynomial $f$ clearly depends on the choice of a cyclic vector, the Newton polygon associated to $(V,\Phi,v)$ is actually independent of $v$.  We can thus safely refer to the result of this construction as the \emph{Newton polygon} for the isocrystal $(V,\Phi)$.  By recording the slopes of each edge of the Newton polygon left to right, repeated with multiplicity, we obtain the corresponding Newton slope sequence $\lambda \in (Q^\vee \otimes_{\Z}\Q)^+$.
\end{defn}

We illustrate via example how one can find a suitable choice of a cyclic vector, calculate the characteristic polynomial, and then construct the associated Newton polygon.

\begin{example}\label{T:GL_2}
Let $(F^2, \Phi)$ be an isocrystal, where $\Phi = g\circ \sigma$ for some $g:=\begin{pmatrix} a&b\\c&d\end{pmatrix} \in GL_2(F)$. The first standard basis vector $e_1$ is a cyclic vector for $(F^2,\Phi)$ if and only if $c \neq 0$, since  \begin{equation}e_1 \wedge \Phi(e_1) = \begin{pmatrix} 1 \\0 \end{pmatrix} \wedge \begin{pmatrix} a  \\ c  \end{pmatrix} = c (e_1 \wedge e_2).\end{equation}  If $c \neq 0$, then we can compute the characteristic polynomial for $(F^2,\Phi,e_1)$ by solving for $\alpha$ and $\beta$ in the following $F$-linear system of equations:
\begin{align}
\Phi^2(e_1) + \alpha \cdot \Phi(e_1) + \beta\cdot e_1 &= 0,\\
\begin{pmatrix} a & b \\ c & d \end{pmatrix} \begin{pmatrix} \sigma(a) \\ \sigma(c) \end{pmatrix} + \alpha \begin{pmatrix} a \\ c \end{pmatrix} + \beta \begin{pmatrix} 1 \\ 0 \end{pmatrix} &= \begin{pmatrix} 0 \\ 0 \end{pmatrix}
.
\end{align}
Therefore, 
\begin{align}
\begin{pmatrix} \alpha \\ \beta \end{pmatrix} &= - \begin{pmatrix} a & 1 \\ c & 0 \end{pmatrix}^{-1}\begin{pmatrix} a & b \\ c & d \end{pmatrix} \begin{pmatrix} \sigma(a) \\ \sigma(c) \end{pmatrix}= \begin{pmatrix} -\sigma(a)-\frac{\sigma(c)}{c}d \\ \frac{\sigma(c)}{c}(ad - bc). \end{pmatrix}
\end{align}
Note that if $a, c \in F^\sigma$ so that $\sigma(a)=a$ and $\sigma(c)=c$, then this $\sigma$-twisted version of the characteristic polynomial coincides with the usual characteristic polynomial for $GL_2$.

For a concrete example, now suppose that $g = \begin{pmatrix} t^{-2} & t^{-1} \\ 1 & t^3 \end{pmatrix}$.  Using the formula derived above, the coefficients of the characteristic polynomial then equal $\alpha = -t^{-2}-t^3$ and $\beta = -t^{-1}+t$.  Taking valuations, we see that $\val(\alpha) = -2$ and $\val(\beta) = -1$.  Therefore, the Newton polygon $\nu(g)$ is the convex hull of the three points $(0,1), (1,2),$ and $(2,0)$ and has slope sequence $\lambda=(1,-2)$.  

\end{example}

Using either definition of the Newton polygon presented so far, there are some challenges to explicitly calculating $\nu(g)$, even given a specific element $g \in G(F)$. Definition \ref{D:NPiso} requires a detailed understanding of the  simple objects in the category of isocrystals.  Although Definition \ref{T:charNP} is relatively concrete, calculations such as those performed in Example \ref{T:GL_2} can become unwieldy for groups of large rank.  To get a sense for how complexity grows with the rank of $G$, see \cite{Be1} for a treatment of the case $G=SL_3$, which is the only other group for which this calculation has been fully carried out in the literature.

\end{subsection}

\subsection{Newton points via extended affine Weyl group elements}\label{S:NPsDef}

The most general definition of the Newton map was given by Kottwitz in \cite{KotIsoI, KotIsoII}, in which he characterized its image on the set $B(G)$ of $\sigma$-conjugacy classes in $G(F)$. The image of an element $g \in G(F)$ under the Newton map is a $\sigma$-conjugacy class invariant, as is the connected component of $G(F)$.  Putting the Newton map together with the \emph{Kottwitz homomorphism} $\kappa$ identifying the connected component, one obtains an injective map
\begin{equation}
(\nu, \kappa) : B(G) \hookrightarrow (P^\vee\otimes_{\Z}\Q)^+ \times \pi_1(G).
\end{equation}
Moreover, the restriction of the map $G(F) \rightarrow B(G)$ to the normalizer $(N_GT)(F)$ factors through the \emph{extended affine Weyl group} $\widetilde{W}_e := (N_GT)(F)/T(\mathcal{O}) \cong X_*(T) \rtimes W$, and the map $\widetilde{W}_e \twoheadrightarrow B(G)$ is surjective; see Corollary 7.2.2 in \cite{GHKRadlvs}.  Therefore, in order to define the Newton map on an algebraic group $G(F)$, it suffices to be able to compute its image on elements of the extended affine Weyl group. Both for this reason and for the purpose of our own argument, the following formula for the image of the Newton map on an element in $\widetilde{W}_e$ is the most important special case.  

\begin{prop}\label{T:NPforW}
Let $y = t^\lambda w \in \widetilde{W}_e$, and suppose that the order of $w$ in $W$ equals $m$.  Then the \emph{Newton point} for $y$ equals
\begin{equation}\label{E:NPforW}
\nu(y) = \left( \frac{1}{m}\sum\limits_{i=1}^m w^i(\lambda)\right)^+,
\end{equation}
where we recall that $\mu^+$ denotes the unique dominant coroot in the $W$-orbit of $\mu \in P^{\vee}$.
\end{prop}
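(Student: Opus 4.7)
The plan is to reduce to the case of a pure translation by passing to the $m$-th power of $y$, and then invoke the homogeneity property of the Newton map.

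First I would use the semidirect product structure of $\widetilde{W}_e \cong P^\vee \rtimes W$ together with the conjugation identity $w t^\lambda w^{-1} = t^{w\lambda}$ to collect all the Weyl group factors on the right. An easy induction gives
\[
y^m = (t^\lambda w)^m = t^{\lambda + w\lambda + w^2\lambda + \cdots + w^{m-1}\lambda}\, w^m = t^\mu,
\]
where $\mu := \sum_{i=1}^m w^i\lambda$, using $w^m = 1$ to reindex the sum. By construction $\mu$ is automatically fixed by $w$, which is consistent with $t^\mu$ being a pure translation.

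Next I would compute $\nu(t^\mu)$ directly, showing that the Newton point of a pure translation is just its dominant $W$-conjugate: $\nu(t^\mu) = \mu^+$. Upon choosing a faithful representation $G \hookrightarrow GL_n$, the element $t^\mu$ becomes a diagonal matrix whose associated isocrystal splits as a direct sum of one-dimensional simple components with slopes equal to the coordinates of $\mu$; ordering these slopes in nonincreasing fashion (per Definition \ref{D:NPiso}) yields $\mu^+$.

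Finally, I would invoke the scaling property $\nu(y^m) = m \cdot \nu(y)$, which is built into Kottwitz's characterization of $\nu$ as a rational cocharacter attached to the $\sigma$-conjugacy class of $y$. Since taking the dominant $W$-representative commutes with multiplication by positive rationals, combining the three steps gives
\[
\nu(y) = \tfrac{1}{m} \nu(y^m) = \tfrac{1}{m} \mu^+ = \left(\tfrac{1}{m}\sum_{i=1}^m w^i\lambda\right)^+.
\]
The main obstacle is cleanly justifying the scaling identity used in the last step. For $GL_n$ it is transparent from the computation $(y\sigma)^m = y^m\sigma^m = t^\mu \sigma^m$, since the slopes of an isocrystal multiply by $m$ upon iterating the Frobenius-twisted operator $m$ times; in the general reductive setting one needs to appeal directly to Kottwitz's functorial construction, where this homogeneity is automatic from the definition of $\nu$ as a pro-algebraic homomorphism from the torus with character group $\Q$.
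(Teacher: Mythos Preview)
The paper does not actually prove this proposition: immediately after the statement it simply remarks that the formula is ``a standard fact in the literature'' and refers the reader to Section~4.2 of G\"ortz's survey. So there is no argument in the paper to compare your proposal against.

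That said, your outline is the standard one and is essentially correct. The computation of $y^m = t^\mu$ with $\mu = \sum_{i=1}^m w^i\lambda$ is right, as is the identification $\nu(t^\mu)=\mu^+$. The only real content, as you correctly flag, is the homogeneity step. The cleanest way to justify it in this setting is not to assert $\nu(y^m)=m\,\nu(y)$ as an a priori identity of the Newton map on powers, but rather to recall that Kottwitz's Newton cocharacter is \emph{defined} (up to conjugacy) by the property that for some $n>0$ with $n\nu\in X_*(T)$, the element $(b\sigma)^n$ is conjugate to $t^{n\nu}\sigma^n$; in the split case with $y\in\widetilde{W}_e$ one may take $n=m$, and since $\sigma$ acts trivially on standard representatives in $\widetilde{W}_e$ the Frobenius-twisted $m$-fold product is just $y^m=t^\mu$. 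This directly gives $\nu(y)=(\mu/m)^+$ without needing to pass through a separate ``$\nu$ of a power'' computation. Your $GL_n$ explanation via iterating the Frobenius-linear operator is fine as intuition, but in writing it up you would want to be explicit that it is the defining property of $\nu$, rather than a consequence, that is being invoked.
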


\noindent Formula \eqref{E:NPforW} is a standard fact in the literature; for example, see \cite{KotIsoI} or Section 4.2 in \cite{GoertzSurvey}. 

For general $G(F)$, the image of the Newton map is thus simply an element of $(P^\vee\otimes_{\Z}\Q)^+$, and from now on we typically refer to $\nu(y)$ as the Newton \emph{point} for $y$, rather than the Newton \emph{polygon}.  As we have seen, when $G=GL_n$ then the Newton point does in fact correspond to the slope sequence for a Newton polygon, and so we use either term when there is no risk of confusion.

We now provide an explicit example illustrating Proposition \ref{T:NPforW}, since Equation \eqref{E:NPforW} will play such a fundamental role in the proof of Theorem \ref{T:main}.  

\begin{example}
Let $G = SL_3(F)$ so that $W = S_3$ is generated by two simple reflections $s_1$ and $s_2$.  Let $y = t^{(-2,0,2)}s_1 \in \widetilde{S_3}$.  Then the Newton point for $y$ equals 
\[\nu(y) = \left[\frac{1}{2}\left( (0,-2,2)+(-2,0,2)\right)\right]^+ = \left[\frac{1}{2}(-2,-2,4)\right]^+= (2,-1,-1).\] The corresponding Newton polygon is the convex hull of the three points $(0,0), (1,2)$, and $(3,0)$.
\end{example}

In proving our main theorem we will only ever need to compute $\nu(y)$ for elements $y \in \widetilde{W}$.  Therefore, for our purposes, Equation \eqref{E:NPforW} will serve as our definition of the Newton map for general $g \in G(F)$; in practice, the only additional required step is to start by finding an element of $(N_GT)(F)$ which is $\sigma$-conjugate to $g$.

\begin{subsection}{The partial ordering on the set of Newton points}

There is a natural partial ordering on the set $\mathcal{N}(G)$ of Newton points occurring for elements in $G(F)$.  We compare two Newton points $\lambda, \mu \in (P^\vee \otimes_{\Z}\Q)^+$ by extending the dominance order on $P^\vee$ to $\Q^r$.  Namely, we say that $\lambda \geq \mu$ if and only if $\lambda-\mu$ is a nonnegative rational combination of positive coweights.   In Section \ref{S:Convexity}, we discuss another useful interpretation of the partial ordering on $\mathcal{N}(G)$ in terms of a convexity condition on Weyl orbits of rational points in the closed Weyl chambers.

\begin{example}
If $\lambda = (3, \frac{1}{2}, \frac{1}{2}, -1, -4)$ is the slope sequence for the red Newton polygon from Figure \ref{fig:NPfig}, then $\lambda$ is greater than the blue Newton point $\mu = (2, 1,0, 0, -4)$, since $\lambda-\mu=\alpha_1^\vee+ \frac{1}{2}\alpha_2^\vee+\alpha_3^\vee$.  Equivalently, all partial sums of the form $\lambda_1 + \cdots + \lambda_i$ are greater than or equal to those for $\mu$.  
\end{example}

Given a Newton polygon $\lambda$ in the plane, we say that another Newton polygon $\mu$ satisfies $\mu \leq \lambda$ if they share a left and rightmost vertex and all edges of $\lambda$ lie either on or above those of $\mu$.  Compare the red and blue Newton polygons in Figure \ref{fig:NPfig}, which illustrates that dominance order coincides with containment of Newton polygons in the case of $G=GL_n$.

 The full poset of Newton polygons $\mathcal{N}(G) = \{ \nu(g) \mid g \in G\}$ was initially studied in \cite{RR, KotIsoII, Ch} from the perspective of arithmetic algebraic geometry.  In particular, Chai established that $\mathcal{N}(G)$ has many desirable combinatorial properties of partially ordered sets.  For example, he proves that $\mathcal{N}(G)$ is a \emph{ranked} poset; \textit{i.e.} all maximal chains have the same length, and he also shows that $\mathcal{N}(G)$ is a lattice.

\end{subsection}

\begin{subsection}{Newton points in affine Schubert cells}

In the context of a reductive group over a local field, one version of the Bruhat decomposition says that 
\begin{equation}\label{E:Bruhat}
G(F) = \bigsqcup_{x \in \widetilde{W}} IxI,
\end{equation}
where recall that the Iwahori subgroup $I$ is the inverse image of the Borel subgroup opposite to $B$ under the map $G(\overline{\F_q}[[t]]) \rightarrow G(\overline{\F_q})$ sending $t \mapsto 0$. For example, if $G=SL_n(F)$ and $B$ is the subgroup of upper triangular matrices, then \begin{equation} I = \begin{pmatrix} \mathcal{O}^{\times} & t\mathcal{O} & \dots & t\mathcal{O} \\ \mathcal{O} & \mathcal{O}^{\times} & \dots & t\mathcal{O} \\ \vdots & \vdots & \ddots & \vdots \\ \mathcal{O} & \mathcal{O} & \dots & \mathcal{O}^{\times} \\ \end{pmatrix}.\end{equation} 

Motivated by applications to Shimura varieties and affine Deligne-Lusztig varieties, it is useful to study the combinatorics of subsets of $\mathcal{N}(G)$ in which one restricts to Newton points which arise from a fixed cell in this affine Bruhat decomposition.  The question then becomes to study the set \begin{equation}\label{E:N(G)_x} \mathcal{N}(G)_x := \{ \nu(g) \mid g \in IxI \}\end{equation}  of Newton points occurring for elements in the \emph{affine Schubert cell} $IxI$.  These subsets clearly inherit the partial ordering on $\mathcal{N}(G)$.  

 The posets $\mathcal{N}(G)_x$ have only been fully characterized for groups of low rank and/or when $x$ has a special form, but many nice combinatorial properties of $\mathcal{N}(G)$ also hold for $\mathcal{N}(G)_x$ in these cases.  For example, in  \cite{Be1} the author proves that if $G=GL_2$ or $G=SL_3$, then the poset $\mathcal{N}(G)_x$ is a ranked lattice. For another special case, if $x=t^{\lambda}$, then $\mathcal{N}(G)_x = \{ \lambda^+\}$ is a single element set; see Corollary 9.2.1 in \cite{GHKRadlvs}.  The precise relationship between $\mathcal{N}(G)_x$ and $\mathcal{N}(G)$ in general remains quite opaque. For example, outside of these special cases, it is not known for which $x$ the poset $\mathcal{N}(G)_x$ is a full subinterval of $\mathcal{N}(G)$.
 
 We remark that if we use a different decomposition on $G(F)$ than the Bruhat decomposition from \eqref{E:Bruhat}, then the study of the corresponding posets of Newton points can become simpler.  For example, using the Cartan decomposition into double cosets of the maximal compact subgroup $K=G(\mathcal{O})$, the Newton poset for elements in $Kt^\lambda K$ consists of all $\mu \in (Q^\vee \otimes_{\Z}\Q)^+$ such that $\mu \leq \lambda^+$, subject to the the integrality condition that the denominator of any rational slope divides its multiplicity.  That is, in this case one always obtains a full subinterval of $\mathcal{N}(G)$, as was proved in the sequence of papers \cite{KRFcrystals, Luc, Gas, GasGLn}.

\end{subsection}

\begin{subsection}{Maximal Newton points}\label{S:MaxNPs}

While the poset $\mathcal{N}(G)_x$ remains rather mysterious in many ways, we review the well-known fact that it possesses a unique maximum element.

\begin{claim}
The poset $\mathcal{N}(G)_x$ contains a unique maximum element.
\end{claim}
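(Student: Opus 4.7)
The plan is to combine Chai's lattice-theoretic result on $\mathcal{N}(G)$ with an explicit attainment argument. First, I would verify that $\mathcal{N}(G)_x$ is finite. Since the Kottwitz invariant $\kappa$ is constant on a fixed affine Schubert cell, and $(\nu,\kappa)$ is injective on $\sigma$-conjugacy classes in $B(G)$, the image $\mathcal{N}(G)_x$ has cardinality at most the number of $\sigma$-conjugacy classes meeting $IxI$. By Corollary 7.2.2 in \cite{GHKRadlvs}, every such class is represented in the normalizer, and hence parametrized by a finite subset of $\widetilde{W}_e$ consisting of elements $\sigma$-related to cosets of bounded length, so finiteness follows.

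Next, I would invoke Chai's theorem from \cite{Ch} that $\mathcal{N}(G)$ is a ranked lattice under dominance order. The finite subset $\mathcal{N}(G)_x$ therefore has a well-defined join $\nu^\ast := \bigvee \mathcal{N}(G)_x$ in the ambient lattice $\mathcal{N}(G)$. At this point the claim reduces to certifying that $\nu^\ast \in \mathcal{N}(G)_x$, i.e., that the abstract join is actually realized by the Newton point of some $g \in IxI$. Any such realization forces $\nu^\ast$ to be the unique maximum of $\mathcal{N}(G)_x$.

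For this final step, I would invoke Viehmann's formula from \cite{VieTrunc} (stated later in this paper as Theorem \ref{T:Vform}), which produces a concrete representative in $\widetilde{W}_e$ whose Bruhat class lies below $x$ and whose Newton point, computable directly via Proposition \ref{T:NPforW}, equals the join $\nu^\ast$. The hard part of this approach is precisely this realization step: without Viehmann's explicit formula, one would have to exhibit by hand a canonical element of $IxI$ whose Newton point dominates all others, and this requires delicate control over how $\sigma$-conjugation interacts with the Iwahori double coset structure and how the Weyl-orbit average in \eqref{E:NPforW} varies across the cell. The payoff is that once this one element is produced and checked against Proposition \ref{T:NPforW}, the uniqueness of the maximum is automatic from the lattice property.
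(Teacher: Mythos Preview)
Your approach has a genuine gap at the realization step. Theorem~\ref{T:Vform} is a \emph{formula} for $\nu_x$, not an existence statement: it presupposes that $\mathcal{N}(G)_x$ already has a maximum, and Viehmann's argument in \cite{VieTrunc} rests on exactly the geometric input (semicontinuity of the Newton point, irreducibility of $IxI$) that the present claim is meant to isolate. Invoking it here is circular. There is a second problem even if one grants the formula as a black box: it asserts $\nu_x = \max\{\nu(y) : y \in \widetilde{W},\ y \leq x\}$, so the witness it hands you is some $y \leq x$ sitting in $IyI$, not an element of $IxI$. You have not explained why $\nu(y)$ should coincide with your abstract lattice join $\nu^\ast = \bigvee \mathcal{N}(G)_x$, nor why that value actually lies in $\mathcal{N}(G)_x$. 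Both of these facts are immediate \emph{once} the maximum of $\mathcal{N}(G)_x$ is known to exist, but that is precisely the statement you are trying to prove.

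The paper's argument sidesteps all of this and uses neither Chai's lattice structure nor Viehmann's formula. It works directly with the geometry: the double coset $IxI$ is irreducible, and the Newton map stratifies it into finitely many locally closed pieces $(IxI)_\lambda = G_\lambda \cap IxI$, with the pieces indexed by maximal $\lambda$ being open. Since an irreducible space cannot contain two disjoint nonempty open subsets, there is exactly one maximal $\lambda$, and it is then automatically the unique maximum. This argument is short, logically prior to Theorem~\ref{T:Vform}, and requires only the standard semicontinuity of the Newton point.
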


\begin{proof}
For a fixed $x \in \widetilde{W}$, the double coset $IxI$ is irreducible. Denote by $G_{\lambda} = \{ g \in G \mid \nu(g) =\lambda\}$, and consider the intersections $(IxI)_{\lambda} := G_{\lambda}\cap IxI$.  Then $IxI$ is the finite union of subsets of the form $(IxI)_{\lambda}$, any two of which are disjoint. If $\lambda \in \mathcal{N}(G)_x$ is maximal, then $(IxI)_{\lambda}$ is an open subset of $IxI$. But since $IxI$ is irreducible, there must exist a unique maximum element in $\mathcal{N}(G)_x$.
\end{proof}

\begin{defn}
Given $x \in \widetilde{W}$, we define the \emph{maximum Newton point} $\nu_x \in (Q^\vee \otimes_{\Z}\Q)^+$, to be the unique maximum element in $\mathcal{N}(G)_x$; \textit{i.e.} for all $\lambda \in \mathcal{N}(G)_x$, we have $ \nu_x \geq \lambda$.  
\end{defn}

The first closed formula for the unique maximum element $\nu_x$ in $\mathcal{N}(G)_x$ was proved by Viehmann, who reduced the computation to an interplay between the combinatorics of two natural partial orderings associated to elements of the affine Weyl group.
\begin{theorem}[Corollary 5.6 \cite{VieTrunc}]\label{T:Vform}
The maximum Newton point associated to $x\in \widetilde{W}$ is
\begin{equation}\label{E:Vform}\nu_x = \max \{ \nu(y) \mid y \in \widetilde{W},\  y \leq x \},
\end{equation} 
where the maximum is taken with respect to dominance order and the elements $y$ and $x$ are related by Bruhat order.
\end{theorem}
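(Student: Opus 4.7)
The plan is to establish both inequalities in the identity, exploiting the interaction between the closure order on affine Schubert cells and the Newton stratification of $G(F)$.

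First, I would prove $\nu_x \geq \nu(y)$ for every $y \in \widetilde{W}$ with $y \leq x$. The Bruhat order on $\widetilde{W}$ governs closures of affine Schubert cells in the affine flag variety, so $I\dot y I \subset \overline{IxI}$ whenever $y \leq x$. The Newton map is $\sigma$-conjugation invariant and upper semi-continuous in the sense that, for any fixed $\mu$, the locus $\{g \in G(F) \mid \nu(g) \leq \mu\}$ is stable under specialization; this is Grothendieck's classical theorem on $F$-isocrystals, extended to reductive groups by Rapoport and Richartz. Taking $\mu = \nu_x$ forces every element of $\overline{IxI}$ to have Newton point at most $\nu_x$, and in particular $\nu(y) = \nu(\dot y) \leq \nu_x$.

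For the reverse inequality, I must produce a single $y \leq x$ with $\nu(y) = \nu_x$. Pick $g \in IxI$ with $\nu(g) = \nu_x$, and let $[b]$ denote its $\sigma$-conjugacy class. The surjection $\widetilde{W}_e \twoheadrightarrow B(G)$ recalled before Proposition \ref{T:NPforW} shows that $[b]$ is represented by some $\dot z$ with $z \in \widetilde{W}_e$, and the Kottwitz homomorphism, which is $\sigma$-conjugation invariant and trivial on $I$, pins down the component of $z$ to equal that of $x$. The essential step is then to refine this to a representative actually Bruhat-below $x$, for which I would invoke He's theorem on minimal length elements: every $\sigma$-conjugacy class meeting the affine Schubert cell $IxI$ contains an element $\dot y$ with $y \in [1,x]$ in the Bruhat order. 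Combined with the upper bound from the previous paragraph, this forces $\nu(y) = \nu_x$.

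The hard part is precisely this last descent, moving from ``representative of $[b]$ lies in $\widetilde{W}_e$'' to ``a representative lies Bruhat-below $x$''; He's theorem delivers exactly this, but its proof is highly nontrivial, relying on iterated length-preserving $\sigma$-conjugation reductions within affine Weyl group cosets. A more self-contained alternative, closer to Viehmann's own truncation approach, is to pass to a finite-dimensional quotient: irreducibility of $IxI$ forces a unique generic Newton stratum carrying $\nu_x$, and compatibility of the Newton stratification with the Bruhat filtration on $\overline{IxI}$ permits an induction on $\ell(x)$ to match the generic point with $\nu(\dot y)$ for some $y \leq x$. Either route, the heart of the argument is a precise comparison of the two closure structures carried by the affine Schubert cell.
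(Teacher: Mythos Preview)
This theorem is not proved in the present paper: it is quoted as Corollary~5.6 of Viehmann's paper \cite{VieTrunc} and used as a black box throughout. There is therefore no ``paper's own proof'' to compare against; the paper simply records the statement and moves on.

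That said, your sketch is a reasonable outline of the ingredients behind Viehmann's result. The upper bound direction via Rapoport--Richartz semicontinuity and the closure relation $IyI \subset \overline{IxI}$ for $y \leq x$ is exactly right and standard. For the reverse direction you correctly locate the genuine difficulty: given $g \in IxI$ with $\nu(g) = \nu_x$, one must descend to a representative $\dot y$ of $[g]$ with $y \leq x$ in Bruhat order. Your invocation of He's minimal-length theory is plausible but would need sharpening---He's original results guarantee a minimal-length representative in $\widetilde{W}$ for each $\sigma$-conjugacy class, not a priori one that is Bruhat-below a prescribed $x$; the version you need is closer to the statement that $X_x(b) \neq \emptyset$ implies $[b]$ meets $\{\dot y : y \leq x\}$, which does follow from later work in that circle of ideas but is not the bare minimal-length theorem. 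Your alternative route through Viehmann's truncation framework is in fact the historically prior and more direct one for this particular statement, and is what the citation points to.
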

\noindent Although elegant, in practice this formula is difficult to implement without the aid of a computer except in certain special cases, involving first finding every element less than the fixed affine Weyl group element $x$ in Bruhat order, and further computing and comparing the Newton points for each of those elements. Our main theorem, which is formally stated in Section \ref{S:MainPrecise}, provides a combinatorial formula for $\nu_x$ which may be directly computed by hand, in addition to having an easy implementation by computer.

\end{subsection}


\section{The Quantum Bruhat Graph and Applications}\label{S:QBG}

The main result in this paper shows that there is a closed combinatorial formula for the maximum element in the poset of Newton points in terms of paths in the \emph{quantum Bruhat graph}.  The nomenclature comes from the fact that this graph was introduced by Brenti, Fomin, and Postnikov in \cite{BFP} to capture the multiplicative structure of the quantum cohomology ring of the complex flag variety, in particular the Chevalley-Monk rule for multiplying by a divisor class.

\begin{subsection}{The quantum Bruhat graph}\label{S:QBGdef}

We now formally define the quantum Bruhat graph, which will be our primary combinatorial tool.  The vertices are given by the elements of the finite Weyl group $w \in W$.   Two elements are connected by an edge if they are related by a reflection satisfying one of two ``quantum relations.''   More precisely, there is a directed edge $w \longrightarrow wr_{\alpha}$ for some $\alpha \in R^+$ if and only if one of two length relationships between $w$ and $wr_{\alpha}$ is satisfied:
\begin{align*}w\ \textcolor{blue}{\longrightarrow}\ wr_{\alpha}\ \ \text{if}\ \ & \ell(wr_{\alpha}) = \ell(w)+1,\ \ \text{or}  \\
w\ \textcolor{red}{\longrightarrow}\ wr_{\alpha}\ \ \text{if}\ \ & \ell(wr_{\alpha}) = \ell(w)-\langle \alpha^{\vee}, 2 \rho \rangle +1.
\end{align*}
 The first type of edges are simply those corresponding to covers in the usual Hasse diagram for the strong Bruhat order on $W$.  The second type of edges, all of which are directed downward in the graph, are ``quantum'' edges coming from the quantum Chevalley-Monk formula of \cite{Pet}. The edges are then labeled by the root corresponding to the reflection used to get from one element to the other, so that the edge from $w \longrightarrow wr_{\alpha}$ is labeled by $\alpha$.  Figure \ref{fig:S_3QBG} shows the quantum Bruhat graph for $W=S_3$, in which we abbreviate $s_is_j$ simply as $s_{ij}$.
\begin{figure}
\begin{center}
 \resizebox{1.9in}{!}
{
\begin{overpic}{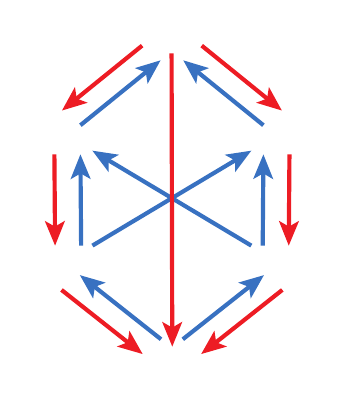}
\put(65.5,63){\bf \huge{$s_{21}$}}
\put(9,63.5){\bf \huge{$s_{12}$}}
\put(65.5,31){\bf \huge{$s_{2}$}}
\put(11.5,31){\bf \huge{$s_{1}$}}
\put(41.5,4){\huge{1}}
\put(35,90.5){\bf \huge{$s_{121}$}}
\put(20,15){\bf \large{\textcolor{dkgreen}{$\alpha_1$}}}
\put(62,15){\bf \large{\textcolor{dkgreen}{$\alpha_2$}}}
\put(75,49){\bf \large{\textcolor{dkgreen}{$\alpha_1$}}}
\put(6.5,49){\bf \large{\textcolor{dkgreen}{$\alpha_2$}}}
\put(20,83){\bf \large{\textcolor{dkgreen}{$\alpha_1$}}}
\put(60.5,83){\bf \large{\textcolor{dkgreen}{$\alpha_2$}}}
\put(35,40){\bf \large{\textcolor{dkgreen}{$\alpha_1$}}}
\put(48.5,40){\bf \large{\textcolor{dkgreen}{$\alpha_2$}}}
\put(42.5,40){\bf \large{\textcolor{dkgreen}{$+$}}}
\end{overpic}
}
\caption{The quantum Bruhat graph for $S_3$.}\label{fig:S_3QBG}
\end{center}
\end{figure}

We now define the weight of any path in the quantum Bruhat graph.   For an edge $w\ \textcolor{blue}{\longrightarrow}\ wr_{\alpha}$ resulting from the relation $\ell(wr_{\alpha}) = \ell(w)+1$, there is no contribution to the weight.  On the other hand, an edge $w\ \textcolor{red}{\longrightarrow}\ wr_{\alpha}$ arising from the relation $\ell(wr_{\alpha}) = \ell(w)-\langle \alpha^{\vee}, 2 \rho \rangle +1$ contributes a weight of $\alpha^{\vee}$. The \emph{weight of a path in the quantum Bruhat graph} is then defined to be the sum of the weights of the edges in the path.  For example, in Figure \ref{fig:S_3QBG}, the weight of any of the three paths of minimal length from $s_{12}$ to $s_2$, all of which have length 3, equals $\alpha^{\vee}_1+\alpha^{\vee}_2$. 

It will also sometimes be convenient to record the weight of a path in the quantum Bruhat graph as a vector in $\Z^r$, where recall that $r$ is the rank of $G$. If we express the weight $\mu \in Q^\vee$ of a path in terms of the basis of simple coroots, say $\mu = d_1\alpha^{\vee}_1 + \cdots + d_r\alpha^{\vee}_r$, then we define $d = (d_1, \dots, d_r) \in \Z^r$ and equivalently also refer to the vector $d$ as the weight of the path.  The purpose of this alternative is that we may then associate monomials in a certain set of commuting variables $q_1, \dots, q_r$ to each path.  In particular, denote by $q^d$ the monomial $q_1^{d_1}\cdots q_r^{d_r}$; see Section \ref{S:qSch} for the motivation.

We now record several combinatorial statements about paths in the quantum Bruhat graph from \cite{PostQBG}, whose proofs rely on the combinatorics of the tilted Bruhat order introduced in \cite{BFP}.  

\begin{prop}[Lemma 1, Theorem 2 \cite{PostQBG}]\label{T:QBGcomb}
Let $u, v \in W$ be any Weyl group elements. 
\begin{enumerate}
\item There exists a directed path from $u$ to $v$ in the quantum Bruhat graph.
\item The length of a shortest path between any two elements in the quantum Bruhat graph it at most $\ell(w_0)$. 
\item All shortest paths from $u$ to $v$ have the same weight, say $d_{\text{min}}$.
\item If $d$ is the weight of any path from $u$ to $v$, then $d_{\text{min}} \leq d$ or equivalently $q^d$ is divisible by $q^{d_{\text{min}}}$.
\end{enumerate}
\end{prop}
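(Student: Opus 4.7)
The plan is to establish all four parts through the combinatorics of the \emph{tilted Bruhat order} $\leq_u$ introduced in \cite{BFP}, whose covering relations correspond to edges in the quantum Bruhat graph emanating from $u$. The central technical tool will be a \emph{diamond} (or confluence) \emph{lemma}: for any $w \in W$ with two distinct outgoing edges $w \to w_1$ and $w \to w_2$, one can find a common downstream element $w_3$ together with quantum Bruhat graph edges $w_1 \to w_3$ and $w_2 \to w_3$, and the total weights of the two resulting two-step paths $w \to w_1 \to w_3$ and $w \to w_2 \to w_3$ agree. This diamond structure is what drives all of the weight-uniqueness phenomena below.

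For part (1), I would first observe that every ordinary Bruhat cover $u \to u s_i$ with $\ell(u s_i) = \ell(u) + 1$ is a blue edge of the quantum Bruhat graph, so from any $u$ one climbs to $w_0$ in $\ell(w_0) - \ell(u)$ steps. Dually, since $\langle \alpha_i^\vee, 2\rho \rangle = 2$ for each simple root, every cover $w \to w s_i$ with $\ell(w s_i) = \ell(w) - 1$ satisfies the quantum length relation exactly, yielding a red edge. Splicing an ascent along a reduced expression for $u^{-1} w_0$ with a descent along a reduced expression for $w_0^{-1} v$ produces the desired directed path $u \to w_0 \to v$.

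For parts (2) and (3), the diamond lemma implies that the tilted order $\leq_u$ is graded, with rank function equal to the shortest-path distance $d(u, v)$ in the quantum Bruhat graph. An independent description of this rank function in terms of a length function on $W$ then identifies $d(u, v) = \ell(z)$ for a specific $z \in W$ depending on $u$ and $v$, which gives (2). Uniqueness of the weight of any geodesic in (3) follows because any two shortest paths from $u$ to $v$ can be connected by a sequence of diamond substitutions, and each substitution preserves the total weight by construction.

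For part (4), I would induct on the length of an arbitrary path $\pi$ from $u$ to $v$ of weight $d$. If $\pi$ is already a geodesic, then (3) applies. Otherwise, locate a non-geodesic two-edge segment $w \to w' \to w''$ inside $\pi$ and replace it, via the diamond lemma, by a shorter one- or two-edge detour joining $w$ to $w''$; the weight drops by a nonnegative integer combination of positive coroots, so the new path has weight $d'$ with $q^{d'}$ dividing $q^d$. Iterating produces a geodesic of weight $d_{\text{min}}$ satisfying $q^{d_{\text{min}}} \mid q^d$. The principal obstacle throughout is proving the quantum diamond lemma itself, which requires a careful case analysis mixing quantum and ordinary edges, with the length identity $\ell(w r_\alpha) = \ell(w) - \langle \alpha^\vee, 2\rho \rangle + 1$ tracked precisely via rank-two root subsystems. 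Once confluence is established, however, the four statements of the proposition fall out uniformly.
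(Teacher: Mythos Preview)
The paper does not supply its own proof of this proposition. It is recorded purely as a citation of Lemma~1 and Theorem~2 of \cite{PostQBG}, with only the remark immediately preceding it that ``proofs rely on the combinatorics of the tilted Bruhat order introduced in \cite{BFP}.'' There is therefore nothing in the paper to compare your argument against beyond that one-line attribution.

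That said, your outline is broadly consonant with the approach in \cite{BFP} and \cite{PostQBG}: the tilted Bruhat order $\leq_u$ and a confluence/diamond lemma really are the engines behind these statements, and your proof of (1) via ascent to $w_0$ and descent by simple quantum edges is correct. One point where your sketch is loose is the inductive step for (4). The diamond lemma as you state it concerns two edges \emph{leaving} a common vertex $w$, completing them to a diamond of equal total weight; this is exactly what is needed for (3). But for (4) you invoke it to ``shorten'' a non-geodesic two-edge segment $w \to w' \to w''$, which is a different local configuration (two consecutive edges, not two divergent ones), and the diamond lemma does not directly produce a shorter detour from $w$ to $w''$. Postnikov's actual argument proceeds instead through the graded structure of $\leq_u$ and the characterization of its rank function, from which the divisibility in (4) is read off without an explicit path-rewriting induction. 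If you wish to carry out your version, you would need an additional local lemma handling the case of two composable edges whose composite is not a geodesic, which is more delicate than the diamond and requires the rank-two analysis you allude to at the end.
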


We use a special case of property (3) concerning the uniqueness of the weight of a minimal length path in the proof of Theorem \ref{T:main}, although we point out that an independent proof of the final step in Proposition \ref{T:trans} would provide a \emph{geometric} explanation for each of these combinatorial properties of the quantum Bruhat graph.  Further, using the automorphisms of the Iwahori subgroup discussed in \cite{Be1}, the author expects that it should also be possible to provide independent geometric proofs of the symmetries of the quantum Bruhat graph for $W = S_n$ appearing in \cite{PostSymm}, which in turn correspond to symmetries of Gromov-Witten invariants.

\end{subsection}

\begin{subsection}{Statement of the main theorem}\label{S:MainPrecise}

We are now prepared to formally state our main result, which provides a readily computable combinatorial formula for the maximum Newton point $\nu_x$ in $\mathcal{N}(G)_x$. The general shape of $\nu_x$ for $x=t^{v\lambda}w$ with $\lambda$ dominant is that $\nu_x = \lambda - \mu$, where $\mu$ is a correction factor obtained by looking at the weight of any minimal length path in the quantum Bruhat graph between two vertices uniquely determined by the pair of finite Weyl group elements associated to $x$. Figures \ref{fig:SL3MaxNPs} and \ref{fig:S_3QBG} together illustrate Theorem \ref{T:main} in the case of $G=SL_3$.

\begin{theorem}\label{T:main}
Let $x = t^{v\lambda}w \in \widetilde{W}$, and consider any path of minimal length $k$ from $w^{-1}v$ to $v$ in the quantum Bruhat graph for $W$. If $\langle \lambda, \alpha_i \rangle > M$ for all simple roots $\alpha_i \in \Delta$, then the maximum Newton point associated to $x$ equals 
\begin{equation}\nu_x = \lambda - \alpha^{\vee}_x,\end{equation} where $\alpha^{\vee}_x$ is the weight of the chosen path from $w^{-1}v$ to $v$.
\end{theorem}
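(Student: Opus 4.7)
The overall plan is to combine Viehmann's reformulation of $\nu_x$ (Theorem \ref{T:Vform}) with the Lam--Shimozono correspondence between covers in the affine Bruhat order and edges in the quantum Bruhat graph, so that a minimal length path from $w^{-1}v$ to $v$ translates into a saturated chain from $x$ down to an explicit pure translation whose Newton point realizes the claimed value $\lambda - \alpha_x^\vee$. Concretely, the argument splits into two halves: producing a witness $y \leq x$ with $\nu(y) = \lambda - \alpha_x^\vee$, and showing that no other $y' \leq x$ beats this in dominance order.

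For the lower bound, I would first translate the given path $w^{-1}v = u_0 \to u_1 \to \cdots \to u_k = v$ in the quantum Bruhat graph into a sequence of affine reflections applied to $x$, using the iterated version of the cover/edge correspondence (Proposition \ref{T:cocover}) alluded to in Section \ref{S:QBGChains}. Under the superregularity hypothesis, each step of this chain remains below $x$ in affine Bruhat order, so after $k$ steps we arrive at an element whose finite part is trivial, i.e.\ a pure translation $t^\mu \leq x$. Tracking the translation part along the chain -- using that a quantum edge labeled $\alpha$ subtracts $\alpha^\vee$ while a Bruhat edge contributes nothing -- gives $\mu^+ = \lambda - \alpha_x^\vee$. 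Since $\nu(t^\mu) = \mu^+$ by Proposition \ref{T:NPforW}, this produces the desired lower bound for $\nu_x$.

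For the upper bound, the plan is to reduce to pure translations: for any $y = t^\mu w' \leq x$, one compares $\nu(y) = \bigl(\tfrac{1}{m}\sum_{i=1}^m (w')^i \mu\bigr)^+$ to the Newton point of some pure translation $t^{\mu'} \leq x$ of equal or larger Newton point. This reduction is the role of Section \ref{S:TransPf}, and it should follow by an averaging/convexity argument combined with the fact that the cyclic $W$-average sits in the convex hull of the Weyl orbit. Once restricted to translations, one must show that if $t^\mu \leq x$ in affine Bruhat order, then $\mu^+ \leq \lambda - \alpha_x^\vee$ in dominance order. Here I would unwind the saturated chain from $x$ down to $t^\mu$ and bound the contribution of each covering relation using the root hyperplane combinatorics of Section \ref{S:RootHyps}: each cover either leaves the translation part unchanged (up to conjugation by a finite element) or subtracts a positive coroot, and the \emph{minimality} of the chosen QBG path forces the total subtracted coroot to be at least $\alpha_x^\vee$ in dominance order. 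This is where the uniqueness-of-weight part of Proposition \ref{T:QBGcomb}(3) enters crucially.

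The hard part will be the upper bound -- specifically, ruling out that some $y \leq x$ with a non-trivial finite part produces, through the averaging in \eqref{E:NPforW}, a Newton point strictly larger than $\lambda - \alpha_x^\vee$. The subtlety is that averaging over powers of $w'$ can mix components in ways that look dominance-incompatible with any single translation, and controlling this requires careful hyperplane-by-hyperplane comparison of how close the walk from $x$ to $y$ passes to each wall $H_{\alpha,m}$. This is precisely what makes the superregularity condition $\langle \lambda, \alpha_i \rangle > M_k$ indispensable: it guarantees that the relevant chains of reflections remain in general position and that the combinatorial upper bound from the QBG path is not contaminated by boundary effects near Weyl chamber walls, a point the authors defer to the discussion in Section \ref{S:HypRmks}.
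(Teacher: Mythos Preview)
Your lower bound is correct and matches the paper: lift the minimal QBG path to a saturated chain via Proposition \ref{T:cocover}, land at a translation $t^\mu$ with $\mu^+ = \lambda - \alpha_x^\vee$, and invoke Theorem \ref{T:Vform}.

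The upper bound, however, has a genuine gap. Your plan is (a) for each $y = t^\mu w' \leq x$, find a translation $t^{\mu'} \leq x$ with $(\mu')^+ \geq \nu(y)$, and then (b) show every translation $t^\mu \leq x$ satisfies $\mu^+ \leq \lambda - \alpha_x^\vee$. Neither step goes through as you describe. For (a), Mazur gives $\nu(y) \leq \mu^+$, but $t^\mu$ need not be $\leq x$ in Bruhat order, and there is no general averaging trick producing such a $t^{\mu'}$. For (b), your proposed method---unwind a saturated chain from $x$ to $t^\mu$ and read off a QBG path---only works for chains whose length is controlled by the superregularity hypothesis; the assumption $\langle \lambda, \alpha_i\rangle > M_k$ gives you roughly $k$ applications of Proposition \ref{T:cocover}, not arbitrarily many, so translations far below $x$ escape this analysis.

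The paper avoids both problems by an induction on $k$ (Proposition \ref{T:trans}). For any $y < x$, pass through some cocover $x_1 \lessdot x$; the inductive hypothesis applied to $x_1$ (whose associated path has length $k-1$, and whose $\lambda'$ still satisfies the $M_{k-1}$ bound) gives $\nu(y) \leq \nu_{x_1} = \gamma^+$ for an appropriate translation at the end of a minimal chain. Thus the only candidate for $\nu_x$ not already handled inductively is $\nu(x)$ \emph{itself}. Ruling out $\nu(x) \geq \mu^+$ is exactly where the hyperplane geometry of Section \ref{S:RootHyps} enters: one bounds $\nu(x)$ above by the projection $\nu_i(x)$ of $\lambda$ onto some wall $H_{\alpha_i}$ (Lemma \ref{T:Projection}), and then shows via the linear functional $F_i$ of Lemma \ref{T:FiFormula} that $\mu^+$ lies outside $\operatorname{Conv}(W\nu_i(x))$ (Lemma \ref{T:FiTranslation}). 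This single comparison, rather than a blanket reduction of all $y$ to translations, is the technical heart of the argument. Finally, Proposition \ref{T:QBGcomb}(3) guarantees all minimal-chain endpoints give the same $\mu^+$, pinning down $\nu_x$ uniquely.
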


\noindent Recall that both here and throughout the paper, $M$ is the constant defined in \eqref{E:MFormula}.

We point out that Conjecture 2 from \cite{Be1} about elements in the $v=w_0$ Weyl chamber follows as an immediate corollary of Theorem \ref{T:main}, since all minimal paths to $w_0$ in the quantum Bruhat graph consist exclusively of upward edges and thus carry no weight.  On the other hand, this observation can also already be made as a consequence of Theorem \ref{T:Vform}, since in the antidominant chamber the translations are the minimal length coset representatives for $\widetilde{W}/W$.

The superregularity hypothesis on $\lambda$ stated in Theorem \ref{T:main} is the sharpest that the current method of proof permits; see Section \ref{S:HypRmks} for a detailed discussion of the components of the proof which introduce the required superregularity conditions.  We remark, however, that property (2) of Proposition \ref{T:QBGcomb} provides a uniform bound on $k$ for any finite Weyl group $W$.  In particular, we know that $k \leq \ell(w_0)$, and so the uniform superregularity hypothesis $\langle \lambda, \alpha_i \rangle > 8\ell(w_0)$ for all $\alpha_i \in \Delta$ implies the stated hypothesis involving $M$ for all classical groups.  Of course, for any given pair $w,v \in W$, the minimum length of any path from $w^{-1}v$ to $v$ might be considerably shorter than $\ell(w_0)$, and thus Theorem \ref{T:main} places a strictly weaker superregularity hypothesis on $\lambda$.  However, for the reader interested in a uniform statement for any $x \in \widetilde{W}$ in a fixed affine Weyl group, we make this observation formal in the following immediate corollary.

\begin{cor}\label{T:MainCor}
Let $x = t^{v\lambda}w \in \widetilde{W}$, and suppose that  for all simple roots $\alpha_i \in \Delta$
\begin{equation}
\langle \lambda, \alpha_i \rangle >
\begin{cases}
8\ell(w_0) & \text{if $G$ is classical,}\\
16\ell(w_0)  & \text{if $G$ is exceptional.}
\end{cases}
\end{equation} 
Then the maximum Newton point associated to $x$ equals $\nu_x = \lambda - \alpha^{\vee}_x,$ where $\alpha^{\vee}_x$ is the weight of any path of minimal length from $w^{-1}v$ to $v$ in the quantum Bruhat graph for $W$. 
\end{cor}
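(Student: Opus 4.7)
The plan is to prove Theorem \ref{T:main} and then read off Corollary \ref{T:MainCor} from Proposition \ref{T:QBGcomb}(2). The starting point is Viehmann's formula (Theorem \ref{T:Vform}), which rewrites
\[
\nu_x = \max\{\nu(y) \mid y \in \widetilde{W},\ y \leq x\}
\]
in dominance order. Combined with Proposition \ref{T:NPforW}, this turns the theorem into two separate assertions about elements below $x$ in affine Bruhat order: (i) the value $\lambda - \alpha_x^\vee$ is \emph{attained} as $\nu(y_*)$ for some $y_* \leq x$, and (ii) it \emph{dominates} $\nu(y)$ for every $y \leq x$. The translation dictionary between the two sides is the Lam--Shimozono correspondence between covers in affine Bruhat order and edges of the quantum Bruhat graph, which I would introduce via Proposition \ref{T:cocover} and then iterate to produce saturated Bruhat chains from arbitrary quantum Bruhat paths, as in the program outlined for Section \ref{S:QBGChains}.

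For the attainment direction, fix a minimal path $w^{-1}v = u_0 \to u_1 \to \cdots \to u_k = v$ in the quantum Bruhat graph of total weight $\alpha_x^\vee$. Using the two-to-one lift from Lam--Shimozono, I would show that this path lifts to a saturated chain in affine Bruhat order starting at $x = t^{v\lambda}w$ and ending at a pure translation $y_* = t^{v(\lambda - \alpha_x^\vee)}$. Because $\lambda$ is assumed superregular, $\lambda - \alpha_x^\vee$ remains dominant and lies in the interior of the dominant Weyl chamber, so $\nu(y_*) = (v(\lambda - \alpha_x^\vee))^+ = \lambda - \alpha_x^\vee$ by Proposition \ref{T:NPforW}. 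This yields the witness and establishes the lower bound.

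For the upper bound, I would first reduce to pure translations, following the reduction of Section \ref{S:TransPf}. Given any $y = t^\mu w' \leq x$, Proposition \ref{T:NPforW} computes $\nu(y)$ as the $W$-orbit average of translations of $\mu$ by powers of $w'$; by applying covering moves that replace conjugation operations with translation steps one can produce a pure translation $t^{\mu'} \leq x$ satisfying $\nu(y) \leq \mu'$ in dominance order. It thus suffices to prove that every $\mu$ with $t^\mu \leq x$ satisfies $\mu^+ \leq \lambda - \alpha_x^\vee$. For this, I would run the Lam--Shimozono correspondence in reverse: any Bruhat chain from $x$ down to $t^\mu$ projects onto a path in the quantum Bruhat graph from $w^{-1}v$ to $v$ whose weight $d$ records precisely the coroot-lattice difference between $v\lambda$ and $\mu$. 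Then Proposition \ref{T:QBGcomb}(3)-(4) imply $d \geq \alpha_x^\vee$ in dominance order, giving $\mu^+ \leq \lambda - \alpha_x^\vee$ as required.

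The main obstacle I foresee is the upper-bound step, and specifically the hyperplane combinatorics of Section \ref{S:RootHyps}: a Bruhat chain from $x$ to $t^\mu$ may cross the affine hyperplane arrangement in many orders, and the resulting translation $\mu$ depends delicately on which affine reflections $r_{\alpha,m}$ appear. One must ensure that every downward step in the chain contributes a coroot-lattice shift that is compatible, on the nose, with the weight contribution of the corresponding (finite) quantum Bruhat edge, and that no cancellations between translation and reflection factors can push $\mu^+$ above $\lambda - \alpha_x^\vee$. This is where the superregularity hypothesis $\langle \lambda, \alpha_i \rangle > M_k$ is essential: it forces every affine hyperplane $H_{\alpha,m}$ relevant to such a chain to lie far from the walls of the finite Weyl chambers, decoupling the combinatorics of the finite $W$-part (governed by the quantum Bruhat graph) from the translation lattice part, and ruling out the boundary phenomena that would otherwise obstruct the comparison. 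With this technical heart in place, Theorem \ref{T:main} is immediate, and Corollary \ref{T:MainCor} follows because $k \leq \ell(w_0)$ by Proposition \ref{T:QBGcomb}(2), so the uniform hypothesis dominates $M_k$ in every Lie type.
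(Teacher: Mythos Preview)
Your lower-bound (attainment) step and your final deduction of the Corollary from Theorem~\ref{T:main} via $k \leq \ell(w_0)$ match the paper's approach. The upper-bound step, however, has a genuine gap and also misreads what Section~\ref{S:RootHyps} is for.

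First, your ``reduction to pure translations'' is asserted but not argued: you claim that for any $y \leq x$ one can produce $t^{\mu'} \leq x$ with $\nu(y) \leq (\mu')^+$, but this is precisely the hard content of Proposition~\ref{T:trans}, and the paper does \emph{not} prove it in the way you suggest. Second, your plan for bounding translations---project an arbitrary Bruhat chain $x \gtrdot \cdots \gtrdot t^\mu$ to a quantum Bruhat path and invoke Proposition~\ref{T:QBGcomb}(4)---breaks down because Proposition~\ref{T:cocover} only identifies cocovers with quantum Bruhat edges under a superregularity hypothesis on the translation part, and that hypothesis is lost once you descend far below $x$ (e.g.\ $t^0 = 1 \leq x$). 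So you cannot handle all $t^\mu \leq x$ this way. The paper avoids both problems by induction on $k$: every $y < x$ lies below some cocover $x_1$ of $x$, and by induction $\nu_{x_1}$ is already a translation Newton point $\gamma^+$, so $\nu(y) \leq \gamma^+ \leq \nu_x$. This reduces the entire upper bound to a single comparison, namely showing $\nu(x) \not\geq \mu^+$ for the translation $t^\mu$ at the bottom of a \emph{minimal} chain. That comparison is exactly what the convexity machinery of Section~\ref{S:RootHyps} is built for: Lemma~\ref{T:Projection} bounds $\nu(x)$ above by a point $\nu_i(x)$ on a wall $H_{\alpha_i}$, and Lemma~\ref{T:FiTranslation} uses the linear functional $F_i$ to place $\mu^+$ outside $\operatorname{Conv}(W\nu_i(x))$, whence $\nu_i(x) \not\geq \mu^+$ by Lemma~\ref{T:AtiyahBott}. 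The role you assign to Section~\ref{S:RootHyps}---``decoupling'' finite and translation combinatorics in the Lam--Shimozono lift---is not what it does.
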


We remark that one can also formally restate Corollaries \ref{T:QSchNP} and \ref{T:ADLVapp} in the sections which follow using the same uniform superregularity hypothesis as in Corollary \ref{T:MainCor}.

\end{subsection}

\begin{subsection}{Connections to quantum Schubert calculus}\label{S:qSch}

We now discuss a surprising connection between the main result about Newton points, which arises from questions in the algebraic geometry of affine flag varieties in characteristic $p>0$, to the quantum cohomology of standard complete flag varieties over $\C$. Given a complex reductive group $G$, the classical cohomology of the complete flag variety $G/B$ over $\C$ is a free $\Z$-module generated by Schubert classes, which are indexed by elements in the Weyl group $W$.  If we define $\Z[q]:=\Z[q_1, \dots, q_r]$, then the quantum cohomology ring of $G/B$ equals $QH^*(G/B) = H^*(G/B, \Z) \otimes_{\Z} \Z[q]$ as a $\Z[q]$-module, and will also have a $\Z[q]$-basis of Schubert classes $\sigma_w$ where $w \in W$.  The main problem in modern quantum Schubert calculus is to explicitly compute the products \begin{equation}\label{E:QSchProb}\sigma_u * \sigma_v = \sum_{w,d} c^{w,d}_{u,v} q^d \sigma_w,\end{equation} by finding non-recursive, positive combinatorial formulas for the \emph{Gromov-Witten invariants} $c^{w,d}_{u,v}$ and the quantum parameters $q^d$.  Roughly speaking, these Gromov-Witten invariants count the number of curves of \emph{degree} $d$ meeting a triple of Schubert varieties determined by $u,v,w \in W$.  
 
Thereom \ref{T:main} turns out to be related to the question of determining which degrees arise in the product of two Schubert classes.  In \cite{PostQBG}, Postnikov strengthens a result of \cite{FW}, which both proves the existence of and then provides a combinatorial formula for the unique minimal monomial $q^d$ which occurs with nonzero coefficient in any quantum Schubert product.

\begin{theorem}[Corollary 3 \cite{PostQBG}]\label{T:Minq^d}
Given any pair $u, v \in W$, the unique minimal monomial that occurs in the quantum product $\sigma_{u} * \sigma_{v}$ equals $q^d$, where $d$ is the weight of any path of minimal length in the quantum Bruhat graph from $u$ to $w_0v$.  
\end{theorem}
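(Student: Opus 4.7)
The strategy is to combine the Fulton--Woodward existence result~\cite{FW}, which guarantees that $\sigma_u*\sigma_v$ admits a unique minimum monomial $q^d$ in the divisibility partial order, with Peterson's quantum Chevalley--Monk formula. The latter asserts that for any $w\in W$ and simple root $\alpha_i$,
\begin{equation*}
\sigma_{s_i}*\sigma_w=\sum_{w\to wr_\alpha}\langle\omega_i,\alpha^\vee\rangle\,q^{\mathrm{wt}(w\to wr_\alpha)}\,\sigma_{wr_\alpha},
\end{equation*}
where the sum is exactly over outgoing edges of $w$ in the quantum Bruhat graph and each quantum parameter records the weight of the corresponding edge. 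Multiplication by a divisor class therefore corresponds to traversing a single QBG edge starting at $w$, which is the combinatorial bridge between the cohomological product and the graph-theoretic paths appearing in the statement.

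To harness this, I would express $\sigma_u$ as a polynomial in the divisor classes $\sigma_{s_i}$ using the quantum Schubert polynomials of Fomin--Gelfand--Postnikov, so that $\sigma_u*\sigma_v$ unfolds as an iterated application of the quantum Chevalley rule beginning at $\sigma_v$. Each contribution to a monomial $q^d\sigma_w$ in the expansion is then bookkept by a directed path in the quantum Bruhat graph whose collected edge weights sum to $d$; by commutativity of $QH^*(G/B)$, the same picture applies reading outward from $u$. Proposition~\ref{T:QBGcomb}(1) ensures that paths from $u$ to $w_0v$ exist, part~(3) asserts that all shortest such paths share a single weight $d_{\min}$, and part~(4) guarantees $q^{d_{\min}}\mid q^{d'}$ for the weight $d'$ of any other path from $u$ to $w_0v$. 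Combined with the Fulton--Woodward uniqueness, this leaves $q^{d_{\min}}$ as the only plausible candidate for the minimum monomial.

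The hard part will be to verify nonvanishing of the coefficient: the iterated Chevalley expansion is not manifestly positive, and in principle distinct paths contributing to $q^{d_{\min}}$ could cancel. I would resolve this by singling out one specific shortest path from $u$ to $w_0v$ whose terminal Schubert class is extremal in the tilted Bruhat order of~\cite{BFP} rooted at $u$, and then either invoke the positivity of three-point genus-zero Gromov--Witten invariants for $G/B$ or directly enumerate the saturated chains in this tilted interval to conclude that the $q^{d_{\min}}$-coefficient is genuinely nonzero. This positivity input---which converts the combinatorial identification of $d_{\min}$ into a statement about the actual product $\sigma_u*\sigma_v$---is precisely where Postnikov's refinement sharpens the existence theorem of~\cite{FW}.
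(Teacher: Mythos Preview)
This theorem is not proved in the paper at all; it is quoted from \cite{PostQBG} as Corollary~3, with no argument supplied. The paper uses it only as a black box input to Corollary~\ref{T:QSchNP}. There is therefore no ``paper's own proof'' against which to compare your proposal.

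For what it is worth, your sketch is broadly aligned with Postnikov's actual argument: the Fulton--Woodward existence theorem, the quantum Chevalley--Monk rule, and the tilted Bruhat order of \cite{BFP} are precisely his ingredients. The step you flag as the hard part---showing the $q^{d_{\min}}$-coefficient is genuinely nonzero---is indeed where the work lies, and your resolution is too vague as stated. Appealing to positivity of Gromov--Witten invariants gives only nonnegativity, not strict positivity of a specific coefficient, so that alone does not close the argument; the substantive route is the tilted-order enumeration you allude to, which is what Postnikov carries out. If you wish to reconstruct the proof rather than cite it, that is the part to flesh out.
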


  The following corollary relates this result in quantum Schubert calculus to our problem of finding maximal Newton points.

\begin{cor}\label{T:QSchNP}
Fix any $u,v \in W$, and define $k$ to be the length of any minimal path from $u$ to $w_0v$ in the quantum Bruhat graph.  Let $\lambda \in Q^\vee$ be any coroot such that $\langle \lambda, \alpha_i \rangle > M$ for all $\alpha_i \in \Delta$.  Then the following are equivalent:
\begin{enumerate}
\item $q^d=q_1^{d_1}\cdots q_r^{d_r}$ is the minimal monomial in the quantum product $\sigma_u * \sigma_v$ 
\item $\lambda- d_1\alpha_1^\vee - \cdots - d_r\alpha_r^\vee$ is the maximum Newton point in $\mathcal{N}(G)_x,$ where $x = t^{w_0v(\lambda)}w_0vu^{-1}$. 
\end{enumerate}
\end{cor}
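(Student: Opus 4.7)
The plan is to observe that this corollary is a direct combination of the main Theorem \ref{T:main} with Postnikov's Theorem \ref{T:Minq^d}, since both results are governed by the weight of a minimal path in the quantum Bruhat graph between the \emph{same} pair of vertices. Once that identification is made, essentially no further work is required.

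First, I would unpack statement (1) via Theorem \ref{T:Minq^d}. The vector $d$ appearing there is the weight of any minimal-length path from $u$ to $w_0 v$ in the quantum Bruhat graph, so that the coroot $d_1 \alpha_1^\vee + \cdots + d_r \alpha_r^\vee$ is precisely this weight. By Proposition \ref{T:QBGcomb}(3) the weight is independent of the chosen minimal path, and by definition of $k$ every such path has length $k$.

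Next, I would unpack statement (2) by applying Theorem \ref{T:main} to $x = t^{v' \lambda} w'$ with $v' = w_0 v$ and $w' = w_0 v u^{-1}$. A routine calculation gives
\[(w')^{-1} v' \;=\; (w_0 v u^{-1})^{-1} (w_0 v) \;=\; u v^{-1} w_0^{-1} w_0 v \;=\; u,\]
so the minimal path prescribed by Theorem \ref{T:main} runs from $u$ to $w_0 v$, exactly the pair of vertices appearing in statement (1). Consequently the common minimal length is $k$, and the superregularity hypothesis $\langle \lambda, \alpha_i \rangle > M_k$ in the corollary is exactly the hypothesis required by Theorem \ref{T:main}.

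With this alignment in hand, Theorem \ref{T:main} produces $\nu_x = \lambda - \alpha_x^\vee$, where $\alpha_x^\vee$ is the weight of a minimal path from $u$ to $w_0 v$ in the quantum Bruhat graph; by Theorem \ref{T:Minq^d} this weight equals $d_1 \alpha_1^\vee + \cdots + d_r \alpha_r^\vee$. Hence both (1) and (2) assert that the weight vector of a minimal $u \to w_0 v$ path has coordinates $(d_1, \ldots, d_r)$, and so they are equivalent. The only verification needed is the identity $(w_0 v u^{-1})^{-1}(w_0 v) = u$, chosen precisely to match source and target — there is no substantive obstacle, since the genuine difficulty has already been absorbed into Theorem \ref{T:main}.
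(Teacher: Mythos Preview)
Your proof is correct and follows essentially the same route as the paper: both arguments set $v' = w_0 v$, $w' = w_0 v u^{-1}$, verify $(w')^{-1}v' = u$, and then combine Theorem \ref{T:Minq^d} with Theorem \ref{T:main} to see that both statements characterize the same weight vector. If anything, your write-up is slightly more careful in computing the inverse and in explicitly invoking Proposition \ref{T:QBGcomb}(3) for the well-definedness of the weight.
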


\begin{proof}
By Theorem \ref{T:Minq^d}, $q^d$ is the minimal monomial in the quantum product $\sigma_u * \sigma_v$ if and only if $d$ is the weight of any path of minimal length in the quantum Bruhat graph from $u$ to $w_0v$.  Set $v' = w_0v$ and $w' = w_0vu^{-1}$, and compute that $(w')^{-1}v' = (w_0vu^{-1})(w_0v)=u$.  Therefore, Theorem \ref{T:main} says that the weight of any such path also gives the correction factor required to calculate $\nu_x$ for $x = t^{v'\lambda}w'$.
\end{proof}

\end{subsection}

\begin{subsection}{Affine Deligne-Lusztig varieties and Mazur's inequality}\label{S:ADLVs}

In \cite{DL}, Deligne and Lusztig constructed a family of varieties $X_w$ in $G(\overline{\F_q})/B$ indexed by elements $w\in W$ to study the representation theory of finite Chevalley groups. Rapoport introduced \emph{affine Deligne-Lusztig varieties} in \cite{RapSatake}, defined as generalizations of these classical Deligne-Lusztig varieties. Although Deligne and Lusztig's original construction was motivated by applications to representation theory \cite{LuszChev}, interest in affine Deligne-Lusztig varieties is rooted in their intimate relationship to reductions modulo $p$ of Shimura varieties, among other arithmetic applications, many of which lie at the heart of the Langlands program; see \cite{RapShimura}.

For $x \in \widetilde{W}$ and $b \in G(F)$, the associated affine Deligne-Lusztig variety is defined as \begin{equation} X_x(b) := \{ g \in G(F)/I \mid g^{-1}b\sigma(g) \in IxI \}. \end{equation}
Unlike in the classical case in which Lang's Theorem automatically says that $X_w$ is non-empty for every $w \in W$, affine Deligne-Lusztig varieties frequently tend to be empty.  Providing a complete characterization for the pairs $(x,b)$ for which the associated affine Deligne-Lusztig variety is non-empty has proven to be a surprisingly challenging problem.  In the context of affine Deligne-Lusztig varieties inside the affine Grassmannian, the non-emptiness question is phrased in terms of \emph{Mazur's inequality}, which relates the coroot $\lambda$ from the translation part of $x$ and the Newton point of $b$; see \cite{Maz, Kat}.  If $\nu(b)$ denotes the Newton point associated to $b$, Mazur's inequality says that $\nu(b) \leq \lambda^+$.

Although no simple analog of Mazur's inequality can perfectly predict whether or not $X_x(b)$ is non-empty, Theorem \ref{T:main} yields an Iwahori analog of Mazur's inequality, providing a necessary condition for non-emptiness under a superregularity hypothesis on the coroot. The following corollary can be viewed as a refinement of Mazur's inequality on the affine Grassmannian for the context of the  affine flag variety.

\begin{cor}\label{T:ADLVapp}
Let $x = t^{v\lambda}w \in \widetilde{W}$, and consider any path of minimal length $k$ from $w^{-1}v$ to $v$ in the quantum Bruhat graph for $W$.  Suppose that $\langle \lambda, \alpha_i \rangle > M$ for all simple roots $\alpha_i \in \Delta$.  Fix $b \in G(F)$, and denote by $\nu(b)$ the Newton point for $b$. If $X_x(b)$ is non-empty, then \begin{equation}\label{E:IwMaz} \nu(b) \leq \lambda - \alpha^{\vee}_x,\end{equation} where $\alpha^{\vee}_x$ is the weight of the chosen path from $w^{-1}v$ to $v$.
\end{cor}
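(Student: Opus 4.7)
The plan is to observe that this corollary is essentially immediate from Theorem \ref{T:main} combined with the definition of $X_x(b)$ and of the poset $\mathcal{N}(G)_x$. The superregularity hypothesis on $\lambda$ enters only through the invocation of Theorem \ref{T:main}; no new combinatorial or geometric input is required beyond unraveling what non-emptiness of $X_x(b)$ says about $\sigma$-conjugacy classes.

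First, I would fix $b \in G(F)$ and suppose that $X_x(b) \neq \emptyset$. Then by the very definition
\begin{equation*}
X_x(b) = \{ g \in G(F)/I \mid g^{-1} b \sigma(g) \in IxI \},
\end{equation*}
there exists some $g \in G(F)$ such that $g^{-1} b \sigma(g) \in IxI$. In particular, $b$ is $\sigma$-conjugate to an element of the affine Schubert cell $IxI$. Since the Newton map is an invariant of the $\sigma$-conjugacy class (as recalled in Section \ref{S:NPsDef}, where the Newton map factors through $B(G)$), it follows that $\nu(b) = \nu(g^{-1} b \sigma(g))$ belongs to the set $\mathcal{N}(G)_x$ defined in equation \eqref{E:N(G)_x}.

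Next, I would invoke the definition of $\nu_x$ as the unique maximum element of the poset $\mathcal{N}(G)_x$ under dominance order, established in Section \ref{S:MaxNPs}. This gives $\nu(b) \leq \nu_x$. Under the superregularity hypothesis $\langle \lambda, \alpha_i \rangle > M_k$ for all $\alpha_i \in \Delta$, Theorem \ref{T:main} identifies $\nu_x = \lambda - \alpha^\vee_x$, where $\alpha^\vee_x$ is the weight of any path of minimal length $k$ from $w^{-1}v$ to $v$ in the quantum Bruhat graph. Chaining these two comparisons yields precisely the inequality \eqref{E:IwMaz}, completing the argument.

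There is no substantial obstacle here: the entire content of the corollary lies in Theorem \ref{T:main}, together with the standard fact that the Newton point is a $\sigma$-conjugacy class invariant. The only mildly subtle point worth highlighting explicitly in the write-up is that one should remind the reader why $\nu(b) \in \mathcal{N}(G)_x$, since the set $\mathcal{N}(G)_x$ was defined via elements of $IxI$ rather than via $\sigma$-conjugacy classes meeting $IxI$; this is immediate from $\sigma$-conjugation invariance of $\nu$ but is the one conceptual step worth stating.
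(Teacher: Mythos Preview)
Your proposal is correct and follows essentially the same approach as the paper's own proof: both argue that non-emptiness of $X_x(b)$ means the $\sigma$-conjugacy class of $b$ meets $IxI$, invoke $\sigma$-conjugacy invariance of the Newton point to conclude $\nu(b) \in \mathcal{N}(G)_x$, and then apply Theorem~\ref{T:main} together with maximality of $\nu_x$.
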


\begin{proof}
Denote by $[b]$ the $\sigma$-conjugacy class of $b$.  By definition, if $X_x(b) \neq \emptyset$, then $[b] \cap IxI \neq \emptyset$ as well.  For any $g \in [b]\cap IxI$, since the Newton point is a $\sigma$-conjugacy class invariant, we know that $\nu(g) = \nu(b)$.  By maximality of $\nu_x$ and the fact that $g \in IxI$, we thus also have $\nu(b) \leq \nu_x.$  Finally, recall that $\nu_x = \lambda - \alpha^\vee_x$ by Theorem \ref{T:main}.
\end{proof}

\begin{remark}\label{R:padic}
For readers primarily interested in applications to Shimura varieties, we remark that the non-emptiness statements obtained by translating this analog of Mazur's inequality to the corresponding affine Deligne-Lusztig varieties also holds when $F$ is the maximal unramified extension of the field of $p$-adic numbers, since the non-emptiness questions for $X_x(b)$ and $X_x(b)_{\Q_p}$ were shown to be equivalent in \cite{GHKRadlvs}.  We also refer such readers to the remarks in Section \ref{S:HypRmks}, where we make explicit the extent to which the superregularity hypothesis can be relaxed beyond those bounds recorded in the results formally stated in Section \ref{S:QBG}.
\end{remark}

\end{subsection}


\section{The Quantum Bruhat Graph and Affine Bruhat Order}\label{S:QBGChains}

This section generalizes a result of Lam and Shimozono from \cite{LS} which we reformulate as Proposition \ref{T:cocover}, proving that covering relations in affine Bruhat order correspond to edges in the quantum Bruhat graph.  In particular, if one is interested in singling out translations below a given $x \in \widetilde{W}$, then iterated application of this observation yields a correspondence stated in Proposition \ref{T:QBGpaths} between saturated chains in affine Bruhat order and paths in the quantum Bruhat graph.  Proposition \ref{T:QBGpaths} is the first of two key propositions required for the proof of Theorem \ref{T:main}.

\subsection{Edges and covering relations}

For affine Weyl group elements whose translation part is superregular, we now discuss a correspondence between edges in the quantum Bruhat graph and covering relations in affine Bruhat order.  This correspondence plays a central role in verifying the equivariant quantum Chevalley-Monk rule in the proof of the Peterson isomorphism in \cite{LS}.  

We start by recalling a standard length formula for affine Weyl group elements written in terms of the translation part and the pair of naturally associated finite Weyl group elements.

\begin{lemma}[Lemma 3.4 \cite{LS}]\label{T:xlength}
Let $\lambda \in Q^+$ be regular dominant, and let $x = t^{v\lambda}w \in \widetilde{W}$.  Then
\begin{equation}\label{E:xlength}
\ell(x) = \ell(t^{\lambda}) - \ell(v^{-1}w) + \ell(v) = \ell(t^{\lambda}) - \ell(w^{-1}v) + \ell(v) = \langle \lambda, 2\rho \rangle - \ell(w^{-1}v)+\ell(v).
\end{equation}
\end{lemma}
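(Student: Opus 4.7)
The second equality, $\ell(v^{-1}w) = \ell(w^{-1}v)$, is immediate from the inversion-invariance of the length function, since $(v^{-1}w)^{-1} = w^{-1}v$.  The third equality, $\ell(t^{\lambda}) = \langle \lambda, 2\rho \rangle$, is the standard length formula for a dominant translation, and will fall out of the general formula used below by specializing to the identity element of $W$.  Thus the substantive content is the first equality.

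The plan is to apply the Iwahori--Matsumoto length formula in the form
\[
  \ell(t^{\mu} u) \;=\; \sum_{\alpha \in R^+}\bigl|\langle \mu, \alpha\rangle + \chi_\alpha(u)\bigr|,
\]
where $\chi_\alpha(u)$ equals $0$ if $u^{-1}\alpha \in R^+$ and equals $-1$ if $u^{-1}\alpha \in R^-$.  Setting $\mu = v\lambda$ and rewriting $\langle v\lambda, \alpha\rangle = \langle \lambda, v^{-1}\alpha\rangle$, I would then split the sum according to the joint sign pattern of $v^{-1}\alpha$ and $w^{-1}\alpha$ over $\alpha \in R^+$.  Since $\lambda \in Q^\vee$ is regular dominant, $\langle \lambda, v^{-1}\alpha\rangle$ is an integer of absolute value at least $1$ for every $\alpha \in R^+$, so the outer absolute value opens cleanly in each of the four sign cases.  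Summing the four contributions and re-indexing the leading sum by $v$ produces
\[
  \ell(x) \;=\; \langle \lambda, 2\rho\rangle + |D| - |B|,
\]
where $B = \{\alpha \in R^+ : v^{-1}\alpha \in R^+,\ w^{-1}\alpha \in R^-\}$ and $D = \{\alpha \in R^+ : v^{-1}\alpha \in R^-,\ w^{-1}\alpha \in R^-\}$.

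The main obstacle, and really the only remaining step, is the combinatorial identity $|B| - |D| = \ell(v^{-1}w) - \ell(v)$.  To establish it, I would write $\ell(v^{-1}w) = |\{\alpha \in R^+ : w^{-1}v\alpha \in R^-\}|$ and substitute $\beta = v\alpha$, obtaining $\ell(v^{-1}w) = |\{\beta \in R : v^{-1}\beta \in R^+,\ w^{-1}\beta \in R^-\}|$.  Splitting this by the sign of $\beta$ and replacing $\beta$ with $-\beta$ in the negative piece identifies the total with $|B| + |E|$, where $E = \{\gamma \in R^+ : v^{-1}\gamma \in R^-,\ w^{-1}\gamma \in R^+\}$.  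An analogous decomposition of $\ell(v) = |\{\alpha \in R^+ : v^{-1}\alpha \in R^-\}|$ by the sign of $w^{-1}\alpha$ yields $\ell(v) = |D| + |E|$.  Subtracting the two expressions cancels $|E|$ and produces the desired identity, which combined with the display above completes the proof.
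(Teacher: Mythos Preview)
Your argument is correct.  The Iwahori--Matsumoto formula in the form you use is standard, the regularity hypothesis on $\lambda$ indeed guarantees $|\langle \lambda, v^{-1}\alpha\rangle| \geq 1$ so the absolute values open as claimed, and your bijective bookkeeping for $|B| - |D| = \ell(v^{-1}w) - \ell(v)$ checks out (your set $E$ is simply the set $C$ in the four-part partition, and the cancellation goes through).

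The paper, however, does not actually prove this lemma.  It introduces the statement as ``a standard length formula for affine Weyl group elements'' and moves on immediately to the next definition, so there is no argument to compare against.  Your write-up therefore supplies a complete proof where the paper simply cites the result as well known.  If you wish to align with the paper's treatment you could replace the full computation by a pointer to a standard reference (e.g.\ Iwahori--Matsumoto or the formulation in Lam--Shimozono), but as a self-contained verification your approach is fine.
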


When we write $x \gtrdot y$, we mean that $x$ \emph{covers} $y$ in Bruhat order; \textit{i.e.} both $x \geq y$ and $\ell(x) = \ell(y) + 1$, or equivalently we say that $y$ is a \emph{cocover} of $x$.

\begin{prop}[Reformulation of Proposition 4.4 \cite{LS}]\label{T:cocover}
Let $x = t^{v\lambda}w \in \widetilde{W}$, and let $r_{\beta} = t^{nv\alpha^{\vee}}r_{v\alpha}$ be the affine reflection that reflects $x$ across the $H_{v\alpha,n}$ hyperplane, where $\alpha \in R^+$.  Further suppose that for all $\alpha_i \in \Delta$,
\begin{equation}\label{E:cocoverhyp}
\langle \lambda, \alpha_i \rangle > 
\begin{cases}
2\ell(w_0)+2 & \text{if $G \neq G_2$},\\
3\ell(w_0)+3 & \text{if $G = G_2$}.
\end{cases}
\end{equation}
  Then $x \gtrdot r_{\beta}x$ is a covering relation if and only if one of the following four conditions holds:
\begin{enumerate}
\item  $\ell(vr_\alpha)=\ell(v)-1$ and $n=0$, in which case $r_\beta x = t^{vr_\alpha(\lambda)}r_{v\alpha}w$.
\item $\ell(vr_\alpha) = \ell(v)+\langle \alpha^{\vee}, 2\rho\rangle -1$ and $n=1$, in which case $r_\beta x= t^{vr_{\alpha}(\lambda- \alpha^{\vee})}r_{v\alpha}w$.
\item $\ell(w^{-1}vr_\alpha) = \ell(w^{-1}v)+1$ and $n=\langle \lambda, \alpha \rangle$, in which case $r_\beta x = t^{v(\lambda)}r_{v\alpha}w$.
\item $\ell(w^{-1}vr_\alpha) = \ell(w^{-1}v)-\langle \alpha^{\vee}, 2\rho\rangle+1$ and $n=\langle \lambda, \alpha \rangle-1$, in which case $r_\beta x = \linebreak t^{v(\lambda - \alpha^{\vee})}r_{v\alpha}w$.
\end{enumerate}
\end{prop}

\begin{figure}
\centering
\begin{overpic}[width=0.26\linewidth]{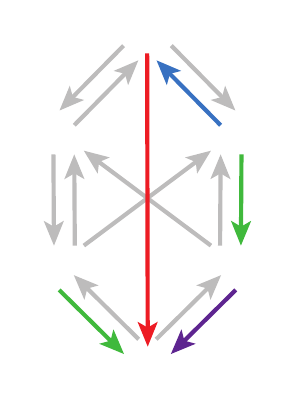}
\put(56,64){\bf \huge{$s_{21}$}}
\put(8,63.5){\bf \huge{$s_{12}$}}
\put(56.5,30.5){\bf \huge{$s_{2}$}}
\put(9,31){\bf \huge{$s_{1}$}}
\put(35,3){\huge{1}}
\put(29,91.5){\bf \huge{$s_{121}$}}
\put(15,16){\bf $\alpha_1$}
\put(54,16){\bf $\alpha_2$}
\put(63,49){\bf $\alpha_1$}
\put(53,81){\bf $\alpha_2$}
\put(29,38){\bf $\alpha_1$}
\put(36.5,38){\bf $+$}
\put(42,38){\bf $\alpha_2$}
\end{overpic}
\hspace{20pt}
 \begin{overpic}[width=0.45\linewidth]{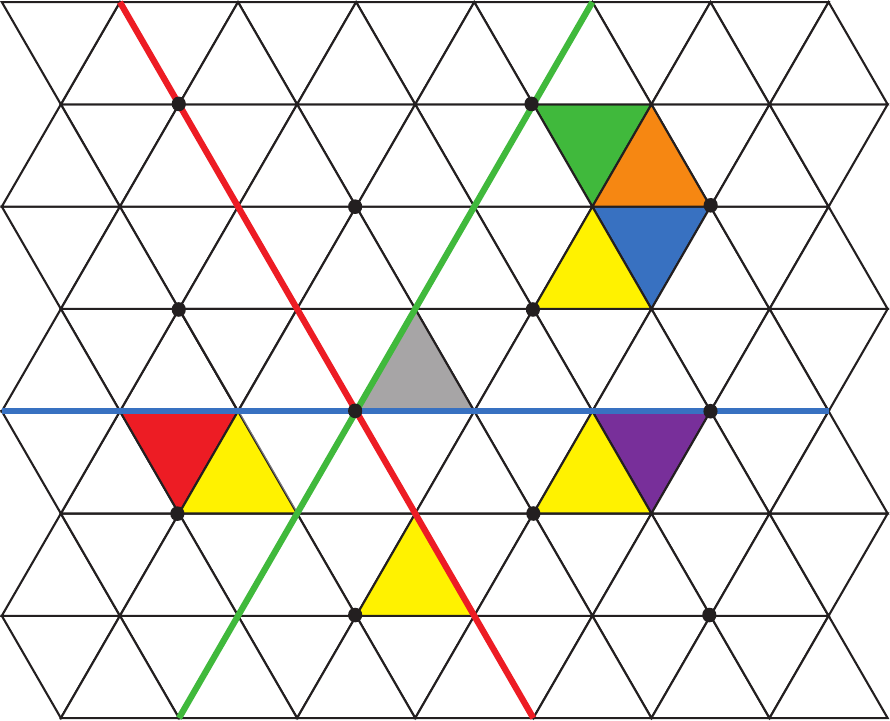}
\put(70.5,60){\bf \huge{$x$}}
\end{overpic}
\caption{Cocovers of $x=t^{2\rho^\vee}s_{12}$ with corresponding edges in the quantum Bruhat graph drawn in the same color; maximal translations less than $x$ are yellow.}
\label{fig:QBGchain}
 \end{figure}

\begin{remark}\label{T:cocoverQBG}
Observe that each of the length conditions in the four cases of this proposition corresponds to an edge in the quantum Bruhat graph.  More precisely, writing the affine reflection $r_\beta = t^{nv\alpha^{\vee}}r_{v\alpha}$, we have the following association between the four cases in Proposition \ref{T:cocover} and edges in the quantum Bruhat graph:
\begin{enumerate}
\item $x \gtrdot r_\beta x$ corresponds to an upward edge into $v$ of the form $vr_{\alpha} \longrightarrow v$
\item $x \gtrdot r_\beta x$ corresponds to a downward edge into $v$ of the form $vr_{\alpha} \longrightarrow v$
\item $x \gtrdot r_\beta x$ corresponds to an upward edge out of $w^{-1}v$ of the form $w^{-1}v \longrightarrow w^{-1}vr_{\alpha}$
\item $x \gtrdot r_\beta x$ corresponds to a downward edge out of $w^{-1}v$ of the form $w^{-1}v \longrightarrow w^{-1}vr_{\alpha}$
\end{enumerate}
\end{remark}

\begin{example}
Before proceeding with the proof, we provide an example which illustrates the correspondence established by Proposition \ref{T:cocover} and Remark \ref{T:cocoverQBG}.  Consider $x=t^{2\rho^\vee}s_{12}$, which is the orange alcove in Figure \ref{fig:QBGchain}.  In order to find all cocovers $r_{\beta}x \lessdot x$, we look at edges going \emph{into} $v=1$ and \emph{out of} $w^{-1}v=s_{21}$ in the quantum Bruhat graph; these are the five colored edges in Figure \ref{fig:QBGchain}.  The three downward edges into $1$ correspond to three cocovers of type (2), each coming from a different type of reflection, and the corresponding alcoves are colored green, purple, and red, respectively.  There is one upward edge and one downward edge directed out of $s_{21}$, corresponding to cocovers of type (3) and (4), respectively.  Note that two different edges give the same green cocover which shares a face with $x$---this situation can arise when $x$ is close to the wall of a Weyl chamber.  Proposition \ref{T:QBGpaths} below explains how each of the two minimal length \emph{paths} from $w^{-1}v=s_{21}$ to $v=1$ give rise to 8 different chains of length 2 from $x$ to one of the yellow translation alcoves.  Importantly, observe that these translations are based at coroots which all lie in the same $W$-orbit.
\end{example}

Our proof of Proposition \ref{T:cocover} closely follows the strategy in the proof of Proposition 4.4 in \cite{LS}, but we include the details both in order to extract a precise superregularity hypothesis on $\lambda$, and also because we adopt several different conventions in this paper.

\begin{proof}[Proof of Proposition \ref{T:cocover}] 
Let $x = t^{v\lambda}w \in \widetilde{W}$, and let $r_{\beta} = t^{nv\alpha^{\vee}}r_{v\alpha}$ be the affine reflection that reflects $x$ across the $H_{v\alpha,n}$ hyperplane.
First compute directly that
\begin{equation}
r_{\beta}x  = t^{nv\alpha^{\vee}}r_{v\alpha}t^{v\lambda}w  = t^{(nv\alpha^{\vee} + r_{v\alpha}v\lambda)}r_{v\alpha}w = t^{v(n\alpha^{\vee}+r_{\alpha}\lambda)}r_{v\alpha}w.
\end{equation}
We can then rewrite this expression in two equivalent ways by either factoring out $r_\alpha$ or using the action of $r_\alpha$ on $\lambda$:
\begin{equation}\label{E:cocoverform}
r_{\beta}x = t^{vr_{\alpha}(\lambda - n\alpha^{\vee})}r_{v\alpha}w = t^{v(\lambda - (\langle \lambda, \alpha\rangle-n)\alpha^{\vee})}r_{v\alpha}w.
\end{equation}
Further suppose that $r_\beta x$ is a cocover of $x$ so that $\ell(x)-\ell(r_\beta x) = 1$. 

Following \cite{LS}, define the function $f: \Z \to \Z_{\geq 0}$ by
\begin{equation}\label{E:fvalues}
f(n) = \ell(t^{v(\lambda-n\alpha^\vee)}) = \ell(t^{vr_\alpha(\lambda-n\alpha^\vee)}) = \ell( t^{v(\lambda - (\langle \lambda, \alpha\rangle-n)\alpha^{\vee})}),
\end{equation}
and note that $f(0) =\ell(t^{v\lambda})=\langle \lambda, 2\rho \rangle = f(\langle \lambda, \alpha \rangle)$ since $\lambda$ is dominant.  As shown in the proof of \cite[Proposition 4.4]{LS}, the function $f$ is a convex function of $n$.
Informally, the idea of this proof is that the function value $f(0)=f(\langle \lambda, \alpha \rangle)$ gives a reasonable approximation for $\ell(x)$, whereas $f(n)$ gives a reasonable approximation for $\ell(r_\beta x)$.  Therefore,  since $r_\beta x$ is a cocover of $x$ and their lengths differ by exactly 1, we will see that $f(n)$ cannot be too far away from either $f(0)$ or $f(\langle \lambda, \alpha \rangle)$, and consequently deduce that $n$ is also quite close to either 0 or $\langle \lambda, \alpha \rangle$.  We shall make each of these claims precise in the argument that follows.

Note that since $x = t^{v\lambda}w$ and $f(0) = \ell(t^{v\lambda})$, then 
\begin{equation}\label{E:xball}
f(0) - \ell(w_0) \leq \ell(x) \leq f(0)+\ell(w_0).
\end{equation}
Similarly, since $r_\beta x =  t^{vr_{\alpha}(\lambda - n\alpha^{\vee})}r_{v\alpha}w$ and $f(n) =  \ell(t^{vr_\alpha(\lambda-n\alpha^\vee)}),$ then 
\begin{equation}\label{E:cocoverball}
f(n) - \ell(w_0) \leq \ell(r_\beta x) \leq f(n)+\ell(w_0).
\end{equation}
Subtracting \eqref{E:cocoverball} from \eqref{E:xball} and using the fact that $\ell(x)-\ell(r_\beta x) = 1$, we see that $-2\ell(w_0)+1 \leq f(0)-f(n) \leq 2\ell(w_0)+1$, and so the maximum distance between $f(0)$ and $f(n)$ is $2\ell(w_0)+1$, or 
\begin{equation}\label{E:ballineq}
|f(0)-f(n)| \leq 2 \ell(w_0)+1.
\end{equation}
 Similarly, since $f(\langle \lambda, \alpha \rangle) = f(0)$, by the same argument, we have
\begin{equation}\label{E:ballineq2}
|f(\langle \lambda, \alpha\rangle) - f(n) | \leq 2 \ell(w_0)+1.
\end{equation}
We proceed next to analyze the function $f$ locally in the neighborhoods around 0 and $\langle \lambda, \alpha \rangle$.

First consider the case where $|n| \leq \ell(w_0)+1$, which means that we are momentarily focusing on $f$ locally in a neighborhood around 0.  We claim that whenever $|n| \leq \ell(w_0)+1$, then the coroot $\lambda - n\alpha^\vee$ is regular and dominant.  To prove this, we must show that $\langle \lambda - n\alpha^\vee, \alpha_i \rangle \geq 1$ for all $\alpha_i \in \Delta$.  Directly compute that by our hypothesis on $\langle \lambda, \alpha_i \rangle$, we have 
\begin{equation}\label{E:dom}
\langle \lambda - n\alpha^\vee, \alpha_i \rangle = \langle \lambda, \alpha_i \rangle - n \langle\alpha^\vee, \alpha_i \rangle \geq 
\begin{cases}
(2 \ell(w_0)+3) - 2n,  & \text{if $G \neq G_2$},\\
(3 \ell(w_0)+4)-3n, & \text{if $G = G_2$,}
\end{cases} 
\end{equation}
where we have also used the fact that the maximum value of $\langle \beta^{\vee}, \alpha_i \rangle$  in any reduced root system equals 2 in every Lie type, except for $G_2$ in which it equals 3; see \cite[Ch.\ VI \textsection 1, no.\ 3]{Bour46}. But for $|n| \leq \ell(w_0)+1$, we further have that 
\begin{equation}
(2\ell(w_0)+3)-2n \geq (2\ell(w_0)+3)-2(\ell(w_0)+1) = 1,
\end{equation}
 and similarly for $G=G_2$.  Altogether, this shows that $\langle \lambda - n\alpha^\vee, \alpha_i \rangle \geq 1$ for all $\alpha_i \in \Delta$ whenever $|n| \leq \ell(w_0)+1$, and so $\lambda - n\alpha^\vee$ is both regular and dominant for these values of $n$.  Further, when $\lambda - n\alpha^\vee$ is dominant, we can write
\begin{equation}\label{E:domfn}
f(n) =  \ell(t^{vr_\alpha(\lambda-n\alpha^\vee)}) = \ell(t^{\lambda - n\alpha^\vee}) = \langle \lambda- n \alpha^\vee, 2\rho \rangle,
\end{equation}
and so 
\begin{equation}\label{E:diff1}
f(0) - f(n) = \langle \lambda, 2 \rho\rangle -  \langle \lambda- n \alpha^\vee, 2\rho \rangle = n \langle \alpha^\vee, 2\rho \rangle.
\end{equation}
In particular, for $-\ell(w_0)-1 \leq n \leq \ell(w_0)+1$, the function $f$ is linear with slope $-\langle \alpha^\vee, 2 \rho \rangle$.  
Since $\alpha \in R^+$, this slope is negative so that $f$ is decreasing on this interval.

We now apply the same argument to values of $n$ in the neighborhood of $\langle \lambda, \alpha \rangle$ to perform a similar local analysis on $f$ there.  We remark that this case follows by the symmetry of the function $f$, but we include the details for the sake of completeness. 
Consider the case where $| \langle \lambda, \alpha \rangle -n| \leq \ell(w_0)+1$.  As in Equation \eqref{E:dom}, for any $\alpha_i \in \Delta$ we have 
\begin{equation}\label{E:dom2}
\langle \lambda - (\langle \lambda, \alpha\rangle -n)\alpha^{\vee}, \alpha_i \rangle  \geq 
\begin{cases}
(2 \ell(w_0)+3) - 2(\langle \lambda, \alpha \rangle - n),  & \text{if $G \neq G_2$},\\
(3 \ell(w_0)+4)-3(\langle \lambda, \alpha \rangle - n), & \text{if $G = G_2$,}
\end{cases} 
\end{equation}
and by our momentary hypothesis $| \langle \lambda, \alpha \rangle -n| \leq \ell(w_0)+1$, we know that $(2(\ell(w_0))+3) - 2(\langle \lambda, \alpha \rangle - n) \geq 1$, and similarly for $G= G_2$.  Therefore, for these values of $n$, the coroot $\lambda - (\langle \lambda, \alpha\rangle -n)\alpha^{\vee}$ is also regular and dominant.  When $\lambda - (\langle \lambda, \alpha\rangle -n)\alpha^{\vee}$ is dominant, we can use the last version of Equation \ref{E:fvalues} to write
\begin{equation}\label{E:domfn2}f(n) = \ell(t^{v(\lambda-(\langle \lambda, \alpha \rangle -n)\alpha^\vee)}) = \langle \lambda-(\langle \lambda, \alpha \rangle -n)\alpha^\vee, 2\rho \rangle,
\end{equation}
and so 
\begin{equation}
f(\langle \lambda, \alpha \rangle) - f(n) = \langle \lambda, 2 \rho\rangle -  \langle \lambda-(\langle \lambda, \alpha \rangle -n)\alpha^\vee, 2\rho \rangle= (\langle \lambda, \alpha \rangle - n) \langle \alpha^\vee, 2\rho \rangle.
\end{equation}
In particular, for $\langle \lambda, \alpha \rangle - \ell(w_0)-1 \leq n \leq \langle \lambda, \alpha \rangle + \ell(w_0)+1$, the function $f$ is linear with slope $\langle \alpha^\vee, 2 \rho \rangle$.  Since $\alpha \in R^+$, this slope is positive so that $f$ is increasing on this interval.

Now consider the function $f$ globally.  First note that the interval $[0,\langle \lambda, \alpha \rangle]$ has length at least $2\ell(w_0)+3$ (respectively $3\ell(w_0) +4$ for $G=G_2$) by our hypothesis on $\langle \lambda, \alpha_i \rangle$.  Note therefore that the intervals $[-\ell(w_0)-1, \ell(w_0)+1]$ and $[\langle \lambda, \alpha \rangle - \ell(w_0)-1, \langle \lambda, \alpha \rangle + \ell(w_0)+1]$ are disjoint.  By the convexity of $f,$ we can only have one scenario for the global shape of $f$.   For values of $n$ between these two intervals $\ell(w_0)+1 < n < \langle \lambda, \alpha \rangle - \ell(w_0)-1$, we must have that  $f(n)\leq f(\ell(w_0)+1)$ by convexity.  Therefore, if $n \in (\ell(w_0)+1, \langle \lambda, \alpha \rangle - \ell(w_0)-1)$, then by Equation \eqref{E:diff1}
\begin{equation}
f(0)-f(n) \geq f(0)-f(\ell(w_0)+1) = (\ell(w_0)+1)\langle \alpha^\vee, 2\rho \rangle \geq 2 \ell(w_0)+2.
\end{equation}
In particular, for $n$ in this gap, we see that $f(0)-f(n) > 2\ell(w_0)+1$, and so comparing Equation \eqref{E:ballineq} we see that $r_\beta x$ cannot possibly be a cocover of $x$ for these values of $n$.  By the same convexity argument, if $n \in (-\infty, -\ell(w_0)-1)$, resp.~ $n \in (\langle \lambda, \alpha \rangle + \ell(w_0)+1, \infty)$, then the difference $|f(0)-f(n)|$, resp.~$|f(\langle \lambda, \alpha \rangle)-f(n)|$, is again too large for $r_\beta x$ to be a cocover of $x$.  

To summarize, if $x \gtrdot r_\beta x$, we may conclude thus far that either $|n| \leq \ell(w_0)+1$ or  $| \langle \lambda, \alpha \rangle -n| \leq \ell(w_0)+1$, or informally that $n$ is reasonably close to either 0 or $\langle \lambda, \alpha \rangle$.  In addition, we may then safely assume that either $\lambda - n\alpha^{\vee}$ is regular dominant or $\lambda - (\langle \lambda, \alpha\rangle -n)\alpha^{\vee}$ is regular dominant.  The remainder of the proof thus naturally breaks up into these two cases.

First suppose that $\lambda - n\alpha^{\vee}$ is regular dominant, or equivalently that $-\ell(w_0)-1 \leq n \leq \ell(w_0)+1$.  Applying the length formula in \eqref{E:xlength}, we obtain 
\begin{equation}\label{E:n0cocoverform}
\ell(r_{\beta}x) = \ell(t^{(\lambda - n\alpha^{\vee})}) - \ell((vr_{\alpha})^{-1}r_{v\alpha}w)+\ell(vr_\alpha) =  \langle \lambda - n\alpha^{\vee}, 2\rho\rangle - \ell(v^{-1}w)+\ell(vr_\alpha),
\end{equation}
where we have used that $r_{v\alpha} = vr_\alpha v^{-1}$ for any $v \in W$. Now  use Equation \eqref{E:xlength} to compare $\ell(x) = \langle \lambda, 2\rho \rangle - \ell(v^{-1}w)+\ell(v)$, and compute the difference
\begin{equation}
\ell(x)-\ell(r_{\beta}x) =  n\langle\alpha^{\vee}, 2\rho \rangle + \ell(v) - \ell(vr_\alpha).
\end{equation}
The element $r_\beta x$ is a cocover of $x$ if and only if $\ell(x)-\ell(r_\beta x) = 1$, which we see by the previous equation occurs if and only if
\begin{equation}
\ell(v) - \ell(vr_\alpha) = 1- n\langle\alpha^{\vee}, 2\rho \rangle. 
\end{equation}
The difference between $\ell(vr_\alpha)$ and $\ell(v)$ for any $v \in W$ is bounded by $\ell(r_\alpha) \leq \langle \alpha^{\vee}, 2\rho \rangle -1$.  (We remark that if $G$ is simply laced, then  in fact $\ell(r_\alpha) = \langle \alpha^{\vee}, 2\rho\rangle -1$.)  Therefore, we have
\begin{equation}\label{E:negcont}
|1-n\langle \alpha^\vee, 2\rho \rangle| \leq \langle \alpha^{\vee}, 2\rho \rangle -1.
\end{equation}

Now further suppose by contradiction that $n<0$.  Since $\langle \alpha^\vee, 2\rho\rangle \geq 2$ because $\alpha$ is a positive root, the expression $1-n\langle \alpha^\vee, 2\rho \rangle$ is positive when $n<0$, and so Equation \eqref{E:negcont} says that 
\begin{equation}
1-n\langle \alpha^\vee, 2\rho \rangle \leq  \langle \alpha^{\vee}, 2\rho \rangle -1 \quad \iff \quad 2 \leq (n+1)\langle \alpha^\vee, 2\rho\rangle.
\end{equation}
Note, however, that for any $n<0$, we have $(n+1)\langle \alpha^\vee, 2\rho \rangle \leq 0$, which is a contradiction.  Therefore, in the case in which $\lambda - n\alpha^\vee$ is dominant, in fact we have shown that $n \in [0,\ell(w_0)+1]$.
We can further conclude from Equation \eqref{E:negcont} that either $n=0$ or the expression $1-n\langle \alpha^\vee, 2\rho \rangle$ is negative, in which case
\begin{equation}
-1+n\langle \alpha^\vee, 2\rho \rangle \leq \langle \alpha^\vee, 2 \rho \rangle -1 \quad \iff \quad 0 \leq (1-n) \langle \alpha^\vee, 2 \rho \rangle,
\end{equation}
which only holds for $n=1$ since $n>0$ and $\langle \alpha^\vee, 2\rho \rangle \geq 2$.
 This means that there are really only two choices for $n$ in the situation when $x \gtrdot r_\beta x$ and $\lambda - n\alpha^\vee \in Q^+$; namely
\begin{equation}
n = \begin{cases}
0 & \text{and}\ \ \ell(v) - \ell(vr_\alpha) = 1 \\
1 & \text{and} \ \ \ell(v) - \ell(vr_\alpha)  = 1 -\langle \alpha^{\vee}, 2\rho \rangle.
\end{cases}
\end{equation}
Recall from Equation \eqref{E:cocoverform} that when $\lambda - n\alpha^\vee$ is dominant, we write $r_{\beta}x = t^{vr_{\alpha}(\lambda - n\alpha^{\vee})}r_{v\alpha}w$.  Substituting these two possible values for $n$ into this formula for $r_\beta x$, we obtain cases (1) and (2) of the proposition.

In the second case, we suppose that $\lambda - (\langle \lambda, \alpha \rangle-n)\alpha^{\vee} $ is regular dominant.  Applying Equation \eqref{E:xlength}, we see that 
\begin{equation}\label{E:cocoverform2}
\ell(r_{\beta}x) = \ell(t^{(\lambda - (\langle \lambda, \alpha \rangle-n)\alpha^{\vee})}) - \ell(v^{-1}r_{v\alpha}w)+\ell(v) =  \langle \lambda- (\langle \lambda, \alpha\rangle-n)\alpha^{\vee}, 2\rho\rangle - \ell(r_{\alpha}v^{-1}w)+\ell(v).
\end{equation}
We now compute the difference between $\ell(x)$ and the cocover $\ell(r_\beta x)$ to be
\begin{equation}
 \ell(x)-\ell(r_{\beta}x) = \ell(r_\alpha v^{-1}w) - \ell(v^{-1}w) +  \langle (\langle \lambda, \alpha \rangle -n)\alpha^{\vee}, 2\rho \rangle=1,
\end{equation}
which we rearrange to obtain
\begin{equation}
\ell(r_\alpha v^{-1}w) - \ell(v^{-1}w) = 1 +  (n-\langle \lambda, \alpha \rangle )\langle \alpha^{\vee}, 2\rho \rangle.
\end{equation}
Again use that $| \ell(r_\alpha u) - \ell(u)| \leq \langle \alpha^{\vee}, 2\rho \rangle -1$ for any $u \in W$ to see that 
\begin{equation}\label{E:absbound2}
|1 +  (n-\langle \lambda, \alpha \rangle )\langle \alpha^{\vee}, 2\rho \rangle| \leq  \langle \alpha^{\vee}, 2\rho \rangle -1.
\end{equation}

Now further suppose by contradiction that $n>\langle \lambda, \alpha \rangle$, which means that $(n-\langle \lambda, \alpha \rangle) \langle \alpha^\vee, 2\rho \rangle \geq 0$.  If $n> \langle \lambda, \alpha \rangle$, then Equation \eqref{E:absbound2} says that 
\begin{equation}
1 +  (n-\langle \lambda, \alpha \rangle )\langle \alpha^{\vee}, 2\rho \rangle \leq  \langle \alpha^{\vee}, 2\rho \rangle -1 \quad \iff \quad (n-\langle \lambda, \alpha \rangle - 1)\langle \alpha^\vee, 2\rho \rangle \leq -2.
\end{equation}
Note, however, that when $n > \langle \lambda, \alpha \rangle \geq 2$, then both $n-\langle \lambda, \alpha \rangle - 1$ and $\langle \alpha^\vee, 2\rho \rangle$ are nonnegative, and their product is thus also nonnegative, which is a contradiction.  Therefore, in the case in which $\lambda - (\langle \lambda, \alpha \rangle - n)\alpha^\vee$ is dominant, then in fact $n \in [\langle \lambda, \alpha \rangle - \ell(w_0)-1, \langle \lambda, \alpha \rangle]$.  We can further conclude from Equation \eqref{E:absbound2} that either $n = \langle \lambda, \alpha \rangle$ or the expression $1 +  (n-\langle \lambda, \alpha \rangle )\langle \alpha^{\vee}, 2\rho \rangle$ is negative, in which case 
\begin{equation}
(n-\langle \lambda, \alpha \rangle )\langle \alpha^{\vee}, 2\rho \rangle \leq  \langle \alpha^{\vee}, 2\rho \rangle -1 \quad \iff \quad 0 \leq (n-\langle \lambda, \alpha \rangle + 1 )\langle \alpha^{\vee}, 2\rho \rangle,
\end{equation}
which only holds for $n = \langle \lambda, \alpha \rangle -1$ since the expression $n-\langle \lambda, \alpha \rangle+1$ is negative for those other $n< \langle \lambda, \alpha \rangle$.  This means that there are really only two viable choices for $n$ in this case; namely,
\begin{equation}
n = \begin{cases}
\langle \lambda, \alpha \rangle & \text{and}\ \ \ell(r_\alpha v^{-1}w) - \ell(v^{-1}w) = 1 \\
\langle \lambda, \alpha \rangle -1 & \text{and} \ \ \ell(r_\alpha v^{-1}w) - \ell(v^{-1}w) = 1 -\langle \alpha^{\vee}, 2\rho \rangle.
\end{cases}
\end{equation}
Observe that $\ell(r_\alpha v^{-1}w) - \ell(v^{-1}w) = \ell(w^{-1}vr_\alpha) - \ell(w^{-1}v)$, recall from Equation \eqref{E:cocoverform} that when $\lambda - (\langle \lambda, \alpha \rangle - n)\alpha^\vee$ is dominant, we write $r_{\beta}x = t^{v(\lambda + (n-\langle \lambda, \alpha\rangle)\alpha^{\vee})}r_{v\alpha}w$.  Substituting these two possible values for $n$ into this formula for $r_\beta x$, we obtain cases (3) and (4) of the proposition.  

Conversely, if we are in either case (1) or (2) of the proposition, since $n\in \{0,1\}$ then $\lambda - n\alpha^\vee $ is regular dominant, so we may use Equation \eqref{E:n0cocoverform} to directly show that $\ell(r_\beta x) = \ell(x)-1$ in either of these two cases. Finally, if we are in either case (3) or (4) of the proposition, since $n\in \{\langle \lambda, \alpha \rangle, \langle \lambda, \alpha \rangle - 1\}$ then $\lambda - (\langle \lambda, \alpha \rangle - n)\alpha^\vee $ is regular dominant, so we may use Equation \eqref{E:cocoverform2} to directly show that $\ell(r_\beta x) = \ell(x)-1$ in either of these two cases, concluding the proof.
\end{proof}

\subsection{Paths and saturated chains}

As we will see in Proposition \ref{T:trans}, in order to find the maximum Newton point $\nu_x$, we will need to look for saturated chains in Bruhat order from $x$ which terminate at a pure translation element.  Repeated application of Proposition \ref{T:cocover} provides an interpretation of such chains in terms of paths in the quantum Bruhat graph, and vice versa.  Although we shall not use the full strength of the correspondence as stated in Proposition \ref{T:QBGpaths} in our proof of Theorem \ref{T:main}, we prove the most precise statement possible in case this proposition might be of independent combinatorial interest.

\begin{prop}\label{T:QBGpaths}
Let $w,v \in W$, and let $k$ be the minimum length of any path in the quantum Bruhat graph from $w^{-1}v$ to $v$.  Define $x = t^{v\lambda}w \in \widetilde{W}$, and suppose that 
\begin{equation}\label{E:PathPropRegHyp}
\langle \lambda, \alpha_i \rangle > 
\begin{cases}
2\ell(w_0)+2k & \text{if $G \neq G_2$},\\
3\ell(w_0)+3k & \text{if $G = G_2$},
\end{cases}
\end{equation}
 for all $\alpha_i \in \Delta$. Then,
\begin{enumerate} 
\item[($i$)] any minimal length saturated chain from $x$ to a translation $t^\mu$ has length $k$, and can be associated to a unique path in the quantum Bruhat graph from $w^{-1}v$ to $v$ of length $k$.  Moreover,  $\mu^+ = \lambda - \sum \beta^{\vee}$, where the sum records the weights of the downward edges in the associated path from $w^{-1}v$ to $v$.
\item[($ii$)] any path of length $k$ in the quantum Bruhat graph from $w^{-1}v$ to $v$ corresponds to $2^k$ saturated chains in Bruhat order of length $k$ from $x$ to a pure translation. Moreover, each such translation $t^\mu$ satisfies $\mu^+ = \lambda - \sum \beta^{\vee}$, where the sum records the weights of the  downward edges in the path from $w^{-1}v$ to $v$.
\end{enumerate}
\end{prop}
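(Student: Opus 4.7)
The plan is to prove part $(ii)$ first by explicit construction and then deduce part $(i)$ by showing that a canonical path extraction from any saturated chain is well-defined. The central observation is that Proposition~\ref{T:cocover}, together with Remark~\ref{T:cocoverQBG}, sorts each cocover of $x=t^{v\lambda}w$ into two families: those coming from an edge of the form $vr_\alpha \longrightarrow v$ incident at the target endpoint $v$ (cases~(1) and~(2)), and those coming from an edge of the form $w^{-1}v \longrightarrow w^{-1}vr_\alpha$ emanating from the source endpoint $w^{-1}v$ (cases~(3) and~(4)). A direct computation shows that in cases~(1) and~(2) the cocover takes the form $t^{v'\lambda'}w'$ with pair $((w')^{-1}v',\,v') = (w^{-1}v,\,vr_\alpha)$, while in cases~(3) and~(4) the new pair becomes $(w^{-1}vr_\alpha,\,v)$. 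Either way, exactly one endpoint of the pair advances by one edge of the quantum Bruhat graph, and the chain terminates at a pure translation precisely when the two endpoints collide.

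For part $(ii)$, fix a QBG path $w^{-1}v = u_0 \to u_1 \to \cdots \to u_k = v$. I would build a saturated chain from $x$ to a pure translation by, at each step, choosing to peel off either the leftmost remaining edge $u_a \to u_{a+1}$ from the source side (via case~(3) or~(4)) or the rightmost remaining edge $u_{b-1} \to u_b$ from the target side (via case~(1) or~(2)); whether the peeled edge is Bruhat or quantum dictates which of the two cases is invoked and whether $\lambda$ shrinks by the corresponding coroot. After $k$ peelings the pair has shrunk to $(u,u)$ for some $u \in W$, so the finite part is trivial and we have reached a pure translation. The binary choice at each of the $k$ steps yields $2^k$ chains, which are genuinely distinct because the intermediate pairs, and therefore the intermediate affine Weyl group elements, are determined by the peeling sequence. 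The key verification is that Proposition~\ref{T:cocover} remains applicable at every intermediate step: at most $k-1$ quantum edges can have been peeled off prior to any step, each subtracting a single positive coroot from the translation component, so $\langle \lambda_j, \alpha_i\rangle \geq \langle \lambda, \alpha_i\rangle - 2(k-1)$ (or $-3(k-1)$ in type~$G_2$) throughout; the hypothesis~\eqref{E:PathPropRegHyp} is calibrated so that this lower bound still exceeds the threshold $2\ell(w_0)$ (respectively $3\ell(w_0)$) required by Proposition~\ref{T:cocover}.

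For part $(i)$, consider any saturated chain $x = x_0 \gtrdot x_1 \gtrdot \cdots \gtrdot x_m$ ending in a pure translation. The same propagation argument shows that the superregularity hypothesis persists inductively, so Proposition~\ref{T:cocover} classifies each cocover $x_{j-1} \gtrdot x_j$ into one of the four cases and records either an edge out of the current source endpoint or an edge into the current target endpoint of the pair attached to $x_{j-1}$. Concatenating the source-side edges in chronological order of occurrence with the target-side edges in reverse chronological order produces a directed walk in the quantum Bruhat graph from $w^{-1}v$ to $v$; because the chain terminates at a pure translation the source and target endpoints have met, so this walk is a genuine path of length $m$. Property~$(2)$ of Proposition~\ref{T:QBGcomb} then forces $m \geq k$, and combined with part~$(ii)$ this yields $m = k$ for any minimal chain. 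The associated path is unique since each of its edges is read off canonically from the cocover data at each step.

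The main obstacle is the careful bookkeeping of the superregularity hypothesis: the constants $2k-2$ and $3k-3$ appearing in~\eqref{E:PathPropRegHyp} must be shown to be precisely what is required so that every intermediate translation component $\lambda_j$ with $j < k$ still satisfies the threshold demanded by Proposition~\ref{T:cocover}, using the bound $\langle\alpha^\vee, \alpha_i\rangle \leq 2$ (respectively $3$ in type~$G_2$) valid for any positive coroot $\alpha^\vee$. A secondary subtlety, which explains the hypothesis $\ell(x) > k$ invoked in part~$(ii)$, is ensuring that the peeling process of length $k$ leaves enough room in Bruhat order to actually produce a nontrivial sequence of cocovers rather than running into the identity prematurely.
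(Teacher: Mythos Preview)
Your approach is essentially the paper's induction unrolled into a direct ``peeling'' construction, and the overall architecture is sound.  Two points deserve sharpening, however.

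First, your distinctness argument for the $2^k$ chains in part~($ii$) relies on the claim that the intermediate pair $((w_j)^{-1}v_j,\,v_j)$ is determined by the peeling sequence and in turn determines $x_j$.  This is fine at every step where at least two edges remain: a left peel leaves the $v$-component unchanged while a right peel replaces it by $v_{j-1}r_\alpha$, and superregularity keeps $\lambda_j$ regular, so the two resulting elements lie in distinct Weyl chambers.  But at the very last step only one edge $u_c\to u_{c+1}$ remains, and both choices produce a pure translation.  The two translations are $t^{u_{c+1}(\lambda''-\epsilon\alpha^\vee)}$ and $t^{u_c(\lambda''-\epsilon\alpha^\vee)}$, which coincide precisely when $r_\alpha$ fixes $\lambda''-\epsilon\alpha^\vee$, i.e.\ when $\langle\lambda'',\alpha\rangle=2\epsilon\le 2$.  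Under the hypothesis~\eqref{E:PathPropRegHyp} this forces $G=A_1$, and the paper singles out exactly the element $x=s_0$ (excluded by $\ell(x)>k$) where the two cocovers genuinely collapse.  So the hypothesis $\ell(x)>k$ is not about ``running into the identity prematurely'' in a length sense; it is about ensuring that at the final peel the two choices really do give distinct translations.  You should address this last-step degeneracy directly rather than via the pair heuristic.

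Second, in part~($i$) you apply the cocover classification to every step of an arbitrary saturated chain of length $m$, but the propagation estimate $\langle\lambda_j,\alpha_i\rangle\ge\langle\lambda,\alpha_i\rangle-2j$ only guarantees the threshold of Proposition~\ref{T:cocover} for $j\le k-1$.  The fix is to invoke the existence half of your part~($ii$) construction first (which does not require $\ell(x)>k$): since a chain of length $k$ exists, any \emph{minimal} chain has length $m\le k$, and now the propagation covers all $m$ steps, yielding the walk of length $m\ge k$ and hence $m=k$.  The paper's inductive organization sidesteps this by working one cocover at a time and checking that the minimal QBG distance for the pair attached to $x_1$ is at least $k-1$, so that the hypothesis with $k$ replaced by $k-1$ applies to $x_1$.
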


\begin{proof} We first prove ($ii$) and then proceed to ($i$).   

($ii$) The proof of this part proceeds by induction on $k$.  Note that there is nothing to prove in the $k=0$ case, which corresponds to the situation in which $x$ itself is already a translation, since $w^{-1}v=v$ if and only if $w=1$.

Now consider any path of length $k\geq 1$ from $w^{-1}v$ to $v$ in the quantum Bruhat graph, say 
\begin{equation}\label{E:path}
w^{-1}v \longrightarrow w^{-1}vr_{\beta_{1}} \longrightarrow \cdots \longrightarrow w^{-1}vr_{\beta_{1}}\cdots r_{\beta_{k-1}} \longrightarrow w^{-1}vr_{\beta_{1}}\cdots r_{\beta_{k}} = v.
\end{equation}
First apply cases (3) and (4) of Proposition \ref{T:cocover} and Remark \ref{T:cocoverQBG} to the initial edge of this path $w^{-1}v \longrightarrow w^{-1}vr_{\beta_1}$.  In either case we obtain a unique cocover $x_1 = t^{v\lambda '}r_{v\beta_1}w$ of $x$, where $\lambda' = \lambda$ in case (3) and $\lambda ' = \lambda - \beta_1^{\vee}$ in case (4), corresponding to whether the edge $w^{-1}v \longrightarrow w^{-1}vr_{\beta_1}$ is directed upward or downward in the graph, respectively.  Now define $w' = r_{v\beta_1}w$, which is the finite part of $x_1$, and compute that 
\begin{equation}\label{E:QBGe1}
(w')^{-1}v = (r_{v\beta_1}w)^{-1}v = w^{-1}(vr_{\beta_1}v^{-1})v = w^{-1}vr_{\beta_1}.
\end{equation}
Note that since $k$ is the minimum length of any path from $w^{-1}v$ to $v$, then the truncated path
\begin{equation}
(w')^{-1}v = w^{-1}vr_{\beta_{1}} \longrightarrow \cdots \longrightarrow  v
\end{equation} 
of length $k-1$ is a path of minimal length from $(w')^{-1}v$ to $v$ in the quantum Bruhat graph.  Finally, by our hypothesis on $\langle \lambda, \alpha_i \rangle$, for any $\alpha_i \in \Delta$,
\begin{equation}\label{E:lambdaindhyp}
\langle \lambda', \alpha_i \rangle \geq \langle \lambda - \alpha^{\vee}, \alpha_i \rangle  > 
\begin{cases}
(2\ell(w_0)+2k)-2 = 2\ell(w_0)+2(k-1) & \text{if $G \neq G_2$,}\\
(3\ell(w_0)+3k)-3 = 3\ell(w_0)+3(k-1) & \text{if $G = G_2$.}
\end{cases}
\end{equation}
Therefore, the induction hypothesis is satisfied on the pair $w', v \in W$ and the element $x_1 = t^{v\lambda '}w' \in \widetilde{W}$.  By induction, we obtain $2^{k-1}$ distinct saturated chains of length $k-1$ from $x_1$ to a pure translation.  To each of these chains, we can pre-append the covering relation $x \gtrdot x_1$ to obtain $2^{k-1}$ saturated chains from $x$ to a pure translation, all of which are length $k$.  

We now consider the path in \eqref{E:path} from a different perspective.  Looking at the final edge in this path which terminates at $v$, we see that $w^{-1}vr_{\beta_1}\cdots r_{\beta_{k-1}} = vr_{\beta_k}$.  It will thus be more convenient to write this final edge in \eqref{E:path} as $vr_{\beta_k} \longrightarrow v$.  Now apply cases (1) and (2) of Proposition \ref{T:cocover} and Remark \ref{T:cocoverQBG} to this final edge.  In either case, we obtain a unique cocover $y_1 = t^{vr_{\beta_k}(\mu)}r_{v\beta_k}w$ of $x$, where $\mu = \lambda$ in case (1) and $\mu = \lambda - \beta_k^{\vee}$ in case (2), corresponding to whether the edge $vr_{\beta_k} \longrightarrow v$ is directed upward or downward in the graph, respectively.  Now define $v'=vr_{\beta_k}$, which indexes the Weyl chamber in which the cocover $y_1$ lies, and denote by $w'' =  r_{v\beta_k}w$, which is the finite part of $y_1$.  Compute that
\begin{equation}\label{E:1vertexchanges}
(w'')^{-1}v' = (r_{v\beta_k}w)^{-1}vr_{\beta_k} = w^{-1}(vr_{\beta_k}v^{-1})vr_{\beta_k} = w^{-1}v.
\end{equation}
By the same calculation as in \eqref{E:lambdaindhyp}, we know that 
\begin{equation}\label{E:muindhyp}
\langle \mu, \alpha_i \rangle > 
\begin{cases}
2\ell(w_0)+2(k-1) & \text{if $G \neq G_2$,}\\
3\ell(w_0)+3(k-1) & \text{if $G = G_2$.}
\end{cases}
\end{equation}
Therefore, the induction hypothesis also applies to the pair $w'', v' \in W$, the element $y_1 \in \widetilde{W}$, and the truncated path
\begin{equation}
(w'')^{-1}v' = w^{-1}v \longrightarrow \cdots \longrightarrow  vr_{\beta_k} = v'
\end{equation}
of length $k-1$ in the quantum Bruhat graph.  We thus obtain $2^{k-1}$ distinct saturated chains of length $k-1$ from $y_1$ to a pure translation, which can be concatenated with the covering $x \gtrdot y_1$ to give $2^{k-1}$ saturated chains from $x$ to a pure translation of length $k$.  The form of the coroot in the translation at the end of these saturated chains follows by the induction hypothesis and the construction of the cocovers $x_1$ and $y_1$.

Finally, we argue that under the superregularity hypothesis on $\lambda$ we have $x_1 \neq y_1$.  Suppose for a contradiction that $x_1 = y_1$.  We then have that 
\begin{equation}
x_1=t^{v(\lambda - m\beta_1^{\vee})}r_{v\beta_1}w = t^{vr_{\beta_k}(\lambda - n\beta_k^{\vee})}r_{v\beta_k}w = y_1,
\end{equation}
where $m,n \in \{0,1\}$.  Comparing the finite parts, we first see that $\beta_1=\beta_k$, and so in the remainder of the argument we omit subscripts for convenience.  Setting the translation parts equal to each other then says that 
\begin{align*}
v(\lambda - m\beta^\vee) &= vr_\beta(\lambda - n\beta^\vee) \quad \quad\quad\quad\quad \iff \\
v\lambda - mv\beta^\vee & = v\lambda - \langle \lambda, \beta \rangle v \beta^\vee + nv\beta^\vee \quad \iff \\
\vec{0} & = (m+n-\langle \lambda, \beta \rangle)v\beta^\vee \quad\quad\hskip 1pt \iff \\
\langle \lambda, \beta \rangle & = m+n.  
\end{align*}
On the other hand, since $m,n \in \{0,1\}$, then $m+n \leq 2$. Comparing this inequality to our superregularity hypothesis \eqref{E:PathPropRegHyp}, we deduce that $G = A_1$ and $\beta = \alpha_1$ so that we can attain the minimum possible value of $\langle \lambda, \beta\rangle = 2$.  A direct calculation shows that there are only two possible elements $x \in \widetilde{S_2}$ which satisfy these criteria.  Namely, either $x = s_0 = t^{\alpha_1}s_1$ (for which the statement of the proposition fails, since in this case there is a single cocover of $x$ rather than two as the proposition predicts), or $x=t^{s_1\alpha_1}s_1$, and both of these elements are excluded by the superregularity hypothesis.  Therefore, in any case permitted by our hypotheses, the cocovers $x_1$ and $y_1$ are distinct.

Since $x_1 \neq y_1$, then each of the chains of the form $x \gtrdot x_1 \gtrdot \cdots \gtrdot t^\nu$ is distinct from each of the chains of the form $x \gtrdot y_1 \gtrdot \cdots \gtrdot t^\gamma$.  Altogether we thus have $2^{k-1} + 2^{k-1} = 2^k$ saturated chains of length $k$ from $x$ to a pure translation, and so the result in ($ii$) follows by induction.

($i$) First suppose that $k=0$.  In this case, $w^{-1}v=v$, which means that $w=1$ and so $x=t^{v\lambda}$ is already a translation.  We can thus associate to $x$ the path of length 0 in the quantum Bruhat graph from $v$ to itself.

Now suppose that $k \geq 1$, and consider a saturated chain of minimal length from $x=t^{v\lambda}w$ to a translation, say $x \gtrdot x_1 \gtrdot \cdots \gtrdot x_m = t^\mu$.  Recall by hypothesis that the minimum length of any path in the quantum Bruhat graph from $w^{-1}v$ to $v$ equals $k$.  Consider any such minimal length path:
\begin{equation}\label{E:pt1path}
w^{-1}v \longrightarrow w^{-1}vr_{\beta_{1}} \longrightarrow \cdots \longrightarrow w^{-1}vr_{\beta_{1}}\cdots r_{\beta_{k-1}} \longrightarrow w^{-1}vr_{\beta_{1}}\cdots r_{\beta_{k}} = v.
\end{equation}
By part ($ii$) proved independently above, this path corresponds to $2^k$ saturated chains of length $k$ from $x$ to a pure translation, so in particular, there exists a saturated chain  of length $k$ from $x$ to a translation.  Therefore, if $m$ is the minimum length from $x$ to a translation, we must have $m \leq k$.

Now using Proposition \ref{T:cocover}, we can write $x_1 = t^{v_1\lambda_1}w_1,$ where the finite part of $x_1$ equals $w_1=r_{\beta_1} w$ with $\beta_1 = r_{v\gamma_1}$ for some $\gamma_1 \in R^+$.  Recall that either $v_1=vr_{\gamma_1}$ in cases (1) and (2) of Proposition \ref{T:cocover}, or $v_1=v$ in cases (3) or (4).  Compute using $r_{v\gamma_1} = vr_{\gamma_1} v^{-1}$ that 
\begin{equation}\label{E:newwv}
(w_1)^{-1}v_1 = 
\begin{cases}
w^{-1}v, & \text{if}\ v_1=vr_{\gamma_1}\\
w^{-1}vr_{\gamma_1}, & \text{if}\ v_1=v.
\end{cases}
\end{equation} In particular, observe that either $w_1^{-1}v_1 = w^{-1}v$ or $v_1 = v$, while the other gets right multiplied by $r_{\gamma_1}$.  Note in addition that either $\lambda_1= \lambda$ or $\lambda_1 = \lambda - \gamma_1^{\vee} \in Q^+$ by Proposition \ref{T:cocover} according to whether the corresponding edge in the quantum Bruhat graph is upward or downward, so that 
\begin{equation}\label{E:pt1superreg}
\langle \lambda_1, \alpha_i \rangle \geq \langle \lambda - \gamma_1^{\vee}, \alpha_i \rangle  > 
\begin{cases}
 2\ell(w_0)+2(k-1) & \text{if $G \neq G_2$,}\\
 3\ell(w_0)+3(k-1) & \text{if $G = G_2$.}
\end{cases}
\end{equation}
If $k=1$ we stop here.  Otherwise, $k>1$ and \eqref{E:pt1superreg} says that we can apply Proposition \ref{T:cocover} now to $x_1$ to write the cocover $x_2 \lessdot x_1$ as $x_2 = t^{v_2\lambda_2}w_2$, where $w_2 = r_{\beta_2}w_1$ with $\beta_2 = r_{v_1\gamma_2}$ for some $\gamma_2 \in R^+$, and either $v_2 = v_1r_{\gamma_2}$ or $v_2=v_1$, depending on which case of Proposition \ref{T:cocover} applies.  

Continuing in this manner, for all $1 \leq j \leq m$, we have
\begin{equation}
\langle \lambda_j, \alpha_i \rangle \geq \langle \lambda_{j-1} - \gamma_j^{\vee}, \alpha_i \rangle  > 
\begin{cases}
 2\ell(w_0)+2(k-j) & \text{if $G \neq G_2$,}\\
 3\ell(w_0)+3(k-j) & \text{if $G = G_2$.}
\end{cases}
\end{equation}
The hypothesis \eqref{E:cocoverhyp} required to iteratively apply Proposition \ref{T:cocover} to the sequence $x, x_1, x_2, \dots, x_{m-1}$ thus permits precisely $k$ applications.  Since $m \leq k$, we can repeatedly use Proposition \ref{T:cocover} to express all $m$ covering relations $x \gtrdot x_1 \gtrdot \cdots \gtrdot x_m=t^\mu$  as $x_i = t^{v_i\lambda_i}w_i$, where each cocover is associated to a unique edge in the quantum Bruhat graph either of the form $w_{i}^{-1}v_{i} \longrightarrow w_{i}^{-1}v_{i}r_{\gamma_{i+1}} =w_{i+1}^{-1}v_{i+1}$ or $v_{i+1} = v_{i}r_{\gamma_{i+1}}\longrightarrow v_{i}$, depending on the type of the cocover as in \eqref{E:newwv}.  

Putting these $m$ edges in the quantum Bruhat graph together according to whether they are outward from $w_i^{-1}v_i$ or inward toward $v_i$, we then have two paths in the quantum Bruhat graph:
\begin{equation}
w^{-1}v \longrightarrow w^{-1}vr_{\gamma_{j_1}} \longrightarrow w^{-1}vr_{\gamma_{j_1}}r_{\gamma_{j_2}} \longrightarrow \cdots \longrightarrow w^{-1}vr_{\gamma_{j_1}}r_{\gamma_{j_2}} \cdots r_{\gamma_{j_p}} = w_m^{-1}v_m,
\end{equation}
\begin{equation}
v \longleftarrow vr_{\gamma_{\ell_1}}  \longleftarrow vr_{\gamma_{\ell_1}}r_{\gamma_{\ell_2}} \longleftarrow \cdots \longleftarrow vr_{\gamma_{\ell_1}}r_{\gamma_{\ell_2}} \cdots r_{\gamma_{\ell_q}} = v_m,
\end{equation}
where the indices $j_1<j_2 < \cdots < j_p$ and $\ell_1< \ell_2 < \cdots < \ell_q$, and the total count is $p+q=m$.  Now recall that $x_m =t^{v_m\lambda_m}w_m= t^\mu$ is a translation, which means that $w_m = 1$, and so in fact $w_m^{-1}v_m = v_m$.  We have thus actually constructed a path in the quantum Bruhat graph of length $m$, starting at vertex $w^{-1}v$ and ending at vertex $v$, which is uniquely determined by the sequence of covering relations $x \gtrdot x_1 \gtrdot \cdots \gtrdot x_m = t^\mu$.  Note by our iterated application of Proposition \ref{T:cocover} that $\mu^+ = \lambda - \sum\gamma^\vee$, where the coroot $\gamma^\vee$ is subtracted if and only if the corresponding edge in the path from $w^{-1}v$ to $v$ is directed downward.  Finally, if $m<k$, the existence of this path from $w^{-1}v$ to $v$ would contradict the minimality of $k$, and so in fact $m=k$.
\end{proof}


\section{Convexity, Dominance Order, and Root Hyperplanes}\label{S:RootHyps}

This section lays the necessary technical groundwork for the proof of our second key proposition in Section \ref{S:Reduction}. The idea is to construct upper and lower bounds on the maximum Newton point $\nu_x$, using the geometry of the affine hyperplane arrangement to compare Newton points in dominance order. For example, Lemma \ref{T:Projection} constructs an element which bounds the Newton point $\nu(x)$ for $x$ from above.  We also mention Lemma \ref{T:FiFormula} defining a linear functional which we then use to bound $\nu_x$ from below in Lemma \ref{T:FiTranslation}. In Section \ref{sec:DomBruhat}, we prove several lemmas relating the dominance and affine Bruhat orders, which may be of independent interest.

\subsection{Convexity and dominance order}\label{S:Convexity}

One key ingredient in the proof of the main theorem uses a geometric interpretation of the dominance order on the poset $(Q^\vee \otimes_{\Q} \R)^+$ in terms of convex subsets of Euclidean space.  We thus state the following lemma due to Atiyah and Bott, which they attribute to earlier work of Horn \cite{Horn} and Kostant \cite{Kostant}.   

\begin{lemma}[Lemma 12.14 \cite{AtiyahBott}]\label{T:AtiyahBott}
Let $x, y \in C$ be any points in the closed dominant Weyl chamber.  Then
\begin{equation}\label{E:ConvexOrbit}
y \leq x \iff y \in \operatorname{Conv}(W x),
\end{equation}
where $\operatorname{Conv}(W x)$ denotes the convex hull of the $W$-orbit of the point $x$.
\end{lemma}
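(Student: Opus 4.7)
The plan is to prove the two implications separately. The reverse implication $(\Leftarrow)$ is a direct calculation, while the forward implication $(\Rightarrow)$ calls for a hyperplane separation argument whose only subtlety lies in correctly handling the duality between coroots and roots.

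For $(\Leftarrow)$, I would invoke the standard fact that for every dominant $x$ and every $w \in W$, the element $x - wx$ is a nonnegative integer combination of simple coroots. This is proved by induction on $\ell(w)$ along a reduced expression, using the base case $x - s_i x = \langle x, \alpha_i \rangle \alpha_i^\vee$ which is a nonnegative multiple of $\alpha_i^\vee$ by dominance of $x$. Then, if $y = \sum_{w \in W} t_w (wx)$ is a convex combination exhibiting $y \in \operatorname{Conv}(Wx)$, one computes
\[
x - y = \sum_{w \in W} t_w (x - wx),
\]
which is manifestly a nonnegative rational combination of simple coroots, proving $y \leq x$.

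For $(\Rightarrow)$, I would argue the contrapositive: if $y \notin \operatorname{Conv}(Wx)$, then $y \not\leq x$. Since the orbit $Wx$ is finite, $\operatorname{Conv}(Wx)$ is a compact convex polytope, and hyperplane separation produces a linear functional $\phi = \langle v, \cdot \rangle$ with $\phi(y) > \phi(wx)$ for every $w \in W$. Let $v^+$ denote the dominant representative of the $W$-orbit of $v$. The difference $v^+ - v$ is a nonnegative rational combination of positive roots, and both $x$ and $y$ pair nonnegatively with all positive roots, so $\langle v, y \rangle \leq \langle v^+, y \rangle$ and $\max_w \langle v, wx \rangle = \max_{u \in Wv} \langle u, x \rangle = \langle v^+, x \rangle$. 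Combining these with the separation inequality yields
\[
\langle v^+, y \rangle \,\geq\, \langle v, y \rangle \,=\, \phi(y) \,>\, \max_{w \in W} \phi(wx) \,=\, \langle v^+, x \rangle,
\]
so $\langle v^+, y - x \rangle > 0$.

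To finish, suppose toward a contradiction that $y \leq x$, so $x - y = \sum_i c_i \alpha_i^\vee$ with each $c_i \geq 0$. Then
\[
\langle v^+, y - x \rangle = -\sum_i c_i \langle v^+, \alpha_i^\vee \rangle \leq 0,
\]
because the dominance of $v^+$ gives $\langle v^+, \alpha_i^\vee \rangle \geq 0$ for every simple coroot. This contradicts the strict positivity above, completing the proof. The main obstacle in the argument is purely notational bookkeeping: one must consistently track which vectors live in the coroot space and which in the root/weight space, and then combine $W$-invariance of $\operatorname{Conv}(Wx)$ with the nonnegativity of the pairing between the two dominant chambers. Once these conventions are fixed, both directions reduce to short computations.
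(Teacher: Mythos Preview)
The paper does not supply its own proof of this lemma: it is quoted verbatim as Lemma~12.14 of \cite{AtiyahBott} and simply cited, with a remark that Stembridge's argument \cite{StPO} handles the integral-lattice version. There is therefore no in-paper proof to compare your proposal against.

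Your argument is the standard one and is correct. Two minor terminological slips that do not affect the logic: for an arbitrary $x \in C \subset \R^r$ the difference $x - wx$ is a nonnegative \emph{real} (not integer) combination of simple coroots, and consequently $x - y$ in your $(\Leftarrow)$ direction is a nonnegative real (not rational) combination; the partial order on $C$ is extended to real coefficients, so this is harmless. Likewise, in $(\Rightarrow)$ the separating functional $v$ is a priori an arbitrary real vector, so $v^+ - v$ is a nonnegative real combination of simple roots. With those words adjusted, both directions go through exactly as you wrote; the duality bookkeeping (replacing the separating $v$ by its dominant representative $v^+$ and using that dominant cocharacters pair nonnegatively with positive roots, and dominant characters with positive coroots) is handled correctly.
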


\noindent We remark that this statement also holds when $x$ and $y$ are elements of the integral weight lattice; see Theorem 1.9 in \cite{StPO}.  In fact, it is possible to adapt Stembridge's proofs to the case of points in $(Q^\vee \otimes_{\Z} \Q)^+$ with bounded denominator, which is sufficient for our purposes given that all Newton points with rational slopes satisfy an integrality condition.  We nevertheless appeal to the result from \cite{AtiyahBott}, since it holds for any vectors in $\R^r$ in the dominant Weyl chamber.

\begin{subsection}{Variations on Mazur's inequality}

Given an element $x = t^{\lambda}w$ in the affine Weyl group, Proposition \ref{T:NPforW} and Lemma \ref{T:AtiyahBott} show that 
\begin{equation}\label{E:Mazur}
\nu(x) \leq \nu(t^\lambda) = \lambda^+.
\end{equation}
Alternatively, this observation can be thought of as an application of Mazur's inequality.  Generically, however, the Newton point of an affine Weyl group element whose finite part is non-trivial will lie \emph{strictly} below the Newton point of the corresponding pure translation element.  We state this slight strengthening of Mazur's inequality in the following simple lemma.

\begin{lemma}\label{T:StrongMazur}
If $x=t^{\lambda}w \in \widetilde{W}$ is such that $\lambda$ is regular and $w \neq 1$, then $\nu(x) < \nu(t^{\lambda})$.
\end{lemma}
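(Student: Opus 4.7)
The plan is to combine the explicit Newton point formula from Proposition \ref{T:NPforW} with the convexity characterization of dominance order in Lemma \ref{T:AtiyahBott}, and then extract strict inequality from the regularity of $\lambda$.

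First I would unpack the two sides. By Proposition \ref{T:NPforW}, if $m$ denotes the order of $w$, then
\begin{equation*}
\nu(x) = \mu^+ \quad \text{where} \quad \mu = \frac{1}{m}\sum_{i=1}^{m} w^{i}(\lambda),
\end{equation*}
while $\nu(t^{\lambda}) = \lambda^{+}$. Since each $w^{i}(\lambda)$ lies in the $W$-orbit $W\lambda = W\lambda^{+}$, the point $\mu$ is a convex combination of elements of $W\lambda^{+}$, hence $\mu \in \operatorname{Conv}(W\lambda^{+})$. The convex hull is $W$-stable, so $\mu^{+} \in \operatorname{Conv}(W\lambda^{+})$ as well; applying Lemma \ref{T:AtiyahBott} to the pair $\mu^{+},\lambda^{+}\in C$ yields $\mu^{+} \leq \lambda^{+}$, which is the weak Mazur-type inequality \eqref{E:Mazur}.

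The crux is promoting this to a strict inequality under the two hypotheses. I would argue by contradiction: assume $\mu^{+} = \lambda^{+}$, so $\mu \in W\lambda^{+}$. Then $\mu$ has the same Euclidean norm as $\lambda$, since $W$ acts by isometries. But $\mu$ is the arithmetic mean of the $m$ vectors $w(\lambda),w^{2}(\lambda),\dots,w^{m}(\lambda)$, each of which has norm $\|\lambda\|$. By strict convexity of the Euclidean norm, the only way the mean of vectors of a fixed norm can again have that norm is if all the vectors coincide. Hence $w^{i}(\lambda) = \lambda$ for all $i$, and in particular $w(\lambda)=\lambda$. Since $\lambda$ is regular its stabilizer in $W$ is trivial, forcing $w = 1$, contrary to hypothesis. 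Therefore $\mu^{+} < \lambda^{+}$, i.e.\ $\nu(x) < \nu(t^{\lambda})$.

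I do not anticipate any serious obstacle; the only subtle point is the strict convexity step, and it is genuinely elementary once the Euclidean norm is invoked. The regularity of $\lambda$ is used exactly once, to convert $w(\lambda)=\lambda$ into $w=1$, and the assumption $w\neq 1$ is exactly what forbids the averaged orbit from collapsing to a single point. Both hypotheses are therefore essential and used in the final paragraph of the argument.
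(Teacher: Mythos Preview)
Your proof is correct and follows the same overall arc as the paper's: compute $\nu(x)$ via the averaging formula, reduce equality $\nu(x)=\lambda^+$ to the condition $w(\lambda)=\lambda$, and finish using regularity of $\lambda$. The one place you genuinely add something is the middle step: the paper simply asserts that $\nu(x)=\lambda^+$ if and only if $w(\lambda)=\lambda$, whereas you supply a clean justification via strict convexity of the Euclidean norm (an average of vectors on a sphere lies on the sphere only if they all coincide). Your route through Lemma~\ref{T:AtiyahBott} for the weak inequality is also a bit more explicit than the paper's appeal to \eqref{E:Mazur}. So the approaches are essentially the same, but your version is more self-contained at the key step.
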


\begin{proof}
Observe by Proposition \ref{T:NPforW} that $\nu(t^{\lambda}) = \lambda^+$.  Proposition \ref{T:NPforW} also says that in general the Newton point $\nu(x)$ for the element $x$ is given by averaging the orbit of $\lambda$ under the powers of $w$.  Therefore, $\nu(x) = \lambda^+$ if and only if $w(\lambda) = \lambda$; \textit{i.e.} this occurs when $w$ is in the stabilizer in $W$ of $\lambda$.  However, \cite[Ch.~V, \textsection 3.3 Prop.~2]{Bour46} says that $\text{Stab}_W(\lambda)$ is generated by the reflections in $W$ that fix $\lambda$.  Therefore, $\text{Stab}_W(\lambda)$ is trivial as long as $\lambda$ is regular and does not lie on any root hyperplane.
\end{proof}

Generally speaking, if $x = t^\lambda w$ and the order of $w \in W$ is large, the Newton point $\nu(x)$ is actually considerably smaller than $\lambda^+$.   However, when the finite part of $x$ is a reflection which is simple with respect to the Weyl chamber containing $x$, then the Newton point of $x$ is as close as possible to $\lambda^+$.  The next lemma shows that this extreme case in which $w=r_{\beta_i}$ interpolates between the prediction of Mazur's inequality and the actual Newton point $\nu(x)$.

\begin{lemma}\label{T:Projection}
Let $x=t^{v\lambda}w \in \widetilde{W}$ with $\lambda \in Q^+$ and $w\neq 1$.  Then there exists a reflection $r_{\beta_i} \in W,$ where $\beta_i = v\alpha_i$ for some $\alpha_i \in \Delta$, such that
$\lambda \geq \nu(t^{v\lambda}r_{\beta_i})\geq \nu(x).$
\end{lemma}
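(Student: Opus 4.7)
The proof splits into a closed-form computation, the trivial left inequality, and the substantive right inequality. Since $s_i$ has order $2$, Proposition~\ref{T:NPforW} yields
\[
\nu(t^\lambda s_i) \;=\; \left(\tfrac{1}{2}(\lambda + s_i\lambda)\right)^+ \;=\; \left(\lambda - \tfrac{1}{2}\langle \lambda, \alpha_i\rangle\,\alpha_i^\vee\right)^+.
\]
The argument inside is already dominant: pairing against a simple root $\alpha_j$ gives $\langle \lambda, \alpha_j\rangle - \tfrac{1}{2}\langle \lambda, \alpha_i\rangle \langle \alpha_i^\vee, \alpha_j\rangle$, which vanishes when $j = i$ and is at least $\langle \lambda, \alpha_j\rangle \geq 0$ when $j\neq i$ (since $\langle \alpha_i^\vee, \alpha_j\rangle \leq 0$ off the Cartan diagonal). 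Hence $\nu(t^\lambda s_i) = \lambda - \tfrac{1}{2}\langle \lambda, \alpha_i\rangle \alpha_i^\vee$, and the left inequality $\lambda \geq \nu(t^\lambda s_i)$ is immediate because the difference is a nonnegative scalar multiple of the simple coroot $\alpha_i^\vee$.

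For the right inequality, if $\langle \lambda, \alpha_i\rangle = 0$ for some simple root $\alpha_i$ then $s_i\lambda = \lambda$ and $\nu(t^\lambda s_i) = \lambda \geq \nu(x)$ by Mazur's inequality~\eqref{E:Mazur}, so I may assume $\lambda$ is regular. By Lemma~\ref{T:StrongMazur} the Newton point satisfies $\nu(x) < \lambda$ strictly, so I can write $\lambda - \nu(x) = \sum_j c_j \alpha_j^\vee$ with $c_j \geq 0$ not all zero; the desired inequality then reduces to producing an index $i$ with $c_i \geq \tfrac{1}{2}\langle \lambda, \alpha_i\rangle$. My plan is to take $\alpha_i$ from the support of $w$ and verify this bound in two stages. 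The first stage is a single-term estimate: for any $v\in W$ whose support contains $s_i$, the coefficient of $\alpha_i^\vee$ in $\lambda - v\lambda$ is at least $\langle \lambda, \alpha_i\rangle$. This should follow from the correspondence between dominance order on the orbit $W\omega_i$ and Bruhat order on minimal-length representatives of $W/W_{\hat{i}}$ (where $W_{\hat{i}}$ is the maximal parabolic fixing $\omega_i$): whenever $v\notin W_{\hat{i}}$ one has $s_i \leq v^{-1}$ on the Grassmannian quotient, hence $v^{-1}\omega_i \leq s_i\omega_i$ in dominance, and pairing with $\lambda$ gives the claimed estimate.

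The second stage averages these single-term bounds across the cyclic orbit $\{w^k\lambda\}_{k=0}^{m-1}$, where $m$ is the order of $w$, and then tracks the dominantization $\nu(x) = v_0 \cdot \nu_{\mathrm{pre}}(x)$ with $\nu_{\mathrm{pre}}(x) = \tfrac{1}{m}\sum_k w^k\lambda$. The main obstacle is precisely this dominantization: the simple-coroot coefficients of $\lambda - \nu(x)$ can be strictly smaller than those of $\lambda - \nu_{\mathrm{pre}}(x)$, so the averaged bound does not transfer mechanically. Navigating this gap will require a careful analysis of the inversion set of $v_0$ and of which simple coroots can be absorbed when sending $\nu_{\mathrm{pre}}(x)$ to its dominant representative; I expect that the fact that $\nu_{\mathrm{pre}}(x)$ arises as a $\langle w\rangle$-centroid (and not an arbitrary point in $\mathrm{Conv}(W\lambda)$) is what allows this reduction to go through for some $i$ in the support of $w$.
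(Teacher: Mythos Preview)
Your proposal has a genuine gap, and you have identified it yourself: you do not know how to push the single-term bound through the dominantization $o(x) \mapsto \nu(x)$, and your suggested route via the support of $w$ and Grassmannian Bruhat order is speculative. The difficulty is real, since as you note the coefficients $c_j$ can drop when passing from $\lambda - o(x)$ to $\lambda - \nu(x)$, and there is no obvious mechanism tied to $\operatorname{supp}(w)$ that controls this.

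The paper sidesteps this issue entirely by choosing $i$ differently. The key observation you are missing is that the centroid $o(x) = \tfrac{1}{m}\sum_{k} w^k\lambda$ is fixed by $w$, hence has nontrivial stabilizer in $W$, and therefore lies on some root hyperplane. Consequently its dominant representative $\nu(x)$ lies on a \emph{wall} $H_{\alpha_i}$ of the dominant chamber. The paper takes \emph{this} $i$. Now both $\nu(x)$ and $\nu(t^\lambda s_i) = \lambda - \tfrac{1}{2}\langle\lambda,\alpha_i\rangle\alpha_i^\vee$ lie in $H_{\alpha_i}$, so their difference does too. Writing $\lambda - \nu(x) = \sum_j r_j \alpha_j^\vee$ with $r_j \geq 0$, one gets
\[
\nu(t^\lambda s_i) - \nu(x) \;=\; \operatorname{proj}_i\!\Bigl(\sum_{j\neq i} r_j \alpha_j^\vee\Bigr) \;=\; \sum_{j\neq i} r_j\Bigl(\alpha_j^\vee - \tfrac{1}{2}\langle\alpha_j^\vee,\alpha_i\rangle\,\alpha_i^\vee\Bigr),
\]
and since off-diagonal Cartan entries satisfy $\langle\alpha_j^\vee,\alpha_i\rangle \leq 0$, each summand is a nonnegative combination of simple coroots. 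This finishes the proof in one line, with no need to track what happens to individual coefficients under dominantization. Your reduction to the inequality $c_i \geq \tfrac{1}{2}\langle\lambda,\alpha_i\rangle$ is correct, but the paper's choice of $i$ makes that inequality an automatic consequence of the projection identity rather than something to be fought for.
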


We remark that Lemma \ref{T:Projection} follows from two applications of \cite[Lemma 3.6]{Ch}, although the proof of Lemma 3.6 is omitted in \cite{Ch}, so we provide a direct proof for the sake of completeness.

\begin{proof} 
By Mazur's inequality \eqref{E:Mazur} and dominance of $\lambda$, we automatically have that $\lambda \geq \nu(x)$.  In addition, for any reflection $r_{\beta} \in W$, we also have that $\lambda \geq \nu(t^{v\lambda}r_\beta)$ again by Mazur's inequality \eqref{E:Mazur}. It therefore remains to choose a suitable reflection $r_{\beta_i} \in W$ so that $\nu(t^{v\lambda}r_{\beta_i}) \geq \nu(x)$. Clearly, if we already have $x=t^{v\lambda} r_{\beta_i}$ for $\beta_i = v\alpha_i$, then there is nothing more to do.  

More generally, since $w \neq 1$, then the order of $w$ equals $m \geq 2$. Recall from Theorem \ref{T:NPforW} that $\nu(x) = \left(\frac{1}{m}\sum\limits_{i=1}^m w^i(v\lambda)\right)^+$.   Denote by $o(x) = \sum\limits_{i=1}^m w^i(v\lambda)$ the sum of all of the elements in the orbit of $w$ on $v\lambda$, and note that $w\cdot o(x) = o(x)$.  By \cite[Ch.~V, \textsection 3.3 Rem.~3]{Bour46}, the element $o(x)$ lies on the wall of some Weyl chamber, and hence so does $\frac{1}{m}o(x)$.  More specifically, both $o(x)$ and $\frac{1}{m}o(x)$ are contained in any hyperplane $H_\alpha$ fixed by a reflection $r_\alpha$ occurring in a reduced decomposition for $w$ with respect to those reflections which are simple with respect to the Weyl chamber containing $o(x)$.  The Newton point $\nu(x)$ equals the unique dominant element in the $W$-orbit of $\frac{1}{m}o(x)$.  Therefore, $\nu(x)$ lies on a wall of the dominant Weyl chamber (perhaps even the intersection of several walls).  Choose any $1 \leq i \leq r$ such that $\nu(x) \in H_{\alpha_i}$, and define $y = t^{v\lambda}r_{\beta_i}$ where $\beta_i = v\alpha_i$.  By Theorem \ref{T:NPforW}, we have $\nu(y) = \frac{1}{2}\left( v\lambda + r_{\beta_i}(v\lambda)\right)^+  = v^{-1}\left( v\lambda - \frac{\langle v\lambda, \beta_i \rangle}{2}\beta_i^\vee\right) =  \lambda - \frac{\langle \lambda, \alpha_i \rangle}{2}\alpha_i^\vee$.

We claim that $\nu(y) \geq \nu(x)$. Recall that $\lambda \geq \nu(x)$ by Mazur's inequality, which means by definition that $\lambda - \nu(x) = \sum\limits_{i=1}^n r_i\alpha_i^{\vee}$ for some nonnegative rational numbers $r_i \in \Q_{\geq 0}$.  Of course, since $\lambda \in Q^+$ and $\lambda - \nu(y) = \frac{\langle \lambda, \alpha_i \rangle}{2}\alpha_i^\vee$, then it is also clear that $\lambda \geq \nu(y)$. Taking differences, we see that $\nu(y) - \nu(x) = \sum\limits_{j\neq i} r_j \alpha^\vee_j + \left(r_i - \frac{\langle \lambda, \alpha_i\rangle}{2}\right) \alpha_i^\vee$.  Note, however, that both $\nu(x)$ and $\nu(y)$ are in the hyperplane $H_{\alpha_i}$. Denote by $\operatorname{pr}_i:\R^r \longrightarrow H_{\alpha_i}$ the orthogonal projection onto this hyperplane.   We can then equivalently express the difference $\nu(y) - \nu(x) = \operatorname{pr}_i \left( \sum\limits_{j\neq i} r_j\alpha_j^{\vee}\right) = \sum\limits_{j\neq i} r_j\operatorname{pr}_i \left(\alpha_j^{\vee}\right) .$ We directly compute that $\operatorname{pr}_i(\alpha_j^\vee) = \alpha_j^\vee - \frac{\langle \alpha_j^\vee, \alpha_i \rangle}{2}\alpha_i^\vee$.  Since each column of the Cartan matrix contains a unique positive entry on the diagonal, whenever $i \neq j$ we know that $\langle \alpha_j^\vee, \alpha_i \rangle \leq 0$; see \cite[Ch.\ VI \textsection 1, no.\ 5]{Bour46}.  Therefore, each vector $\operatorname{pr}_i(\alpha_j^\vee)$ is a nonnegative rational sum of coroots.  Since each $r_j \in \Q_{\geq 0}$, we have thus shown that $\nu(y) \geq \nu(x)$ in dominance order.  
\end{proof}

\begin{notation}\label{nu_i}
Given $x = t^{v\lambda}w \in \widetilde{W}$ with $\lambda \in Q^+$ and $w\neq 1$, denote by $\nu_i(x)$ the Newton point of the element $t^{v\lambda} r_{\beta_i}$ whose existence is guaranteed by Lemma \ref{T:Projection}. The element $\nu_i(x)$ can be thought of geometrically as the maximal element on the hyperplane $H_{\alpha_i}$ such that $\lambda^+ \geq \nu_i(x) \geq \nu(x)$, although we shall not use this property in its full strength.
\end{notation}

For geometric arguments involving Lemmas \ref{T:AtiyahBott} or \ref{T:Projection}, it can sometimes be useful to replace a given alcove by another which lies in the dominant Weyl chamber, but which has the same Newton point as the original alcove.  The next lemma makes such trades possible. 

\begin{lemma}\label{T:Dominant} 
Let $x = t^{v \lambda} w \in \widetilde{W}$, where $\lambda \in Q^+$ is dominant and $v,w \in W$.  Then,
\begin{equation}\label{E:DomReduction}
\nu(x) = \nu(t^{\lambda}v^{-1}wv).
\end{equation}
\end{lemma}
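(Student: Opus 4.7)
The plan is to proceed by direct computation using the explicit formula for the Newton point in Proposition \ref{T:NPforW}, exploiting the fact that the two elements have conjugate finite parts and translation parts that differ by precisely the conjugating element.

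First, I would observe that the finite part of $y := t^{\lambda} v^{-1}wv$ is $v^{-1}wv$, which is conjugate to $w$ in $W$ and therefore has the same order $m$ as $w$. Applying Proposition \ref{T:NPforW} to both $x$ and $y$, I get
\begin{equation*}
\nu(x) = \left(\frac{1}{m}\sum_{i=1}^m w^i(v\lambda)\right)^{+} \qquad \text{and} \qquad \nu(y) = \left(\frac{1}{m}\sum_{i=1}^m (v^{-1}wv)^i(\lambda)\right)^{+}.
\end{equation*}

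Next, I would use the identity $(v^{-1}wv)^i = v^{-1} w^i v$ to rewrite the sum inside $\nu(y)$ as
\begin{equation*}
\sum_{i=1}^m (v^{-1}wv)^i(\lambda) = v^{-1}\!\left(\sum_{i=1}^m w^i(v\lambda)\right).
\end{equation*}
Thus the vector being ``dominantized'' in $\nu(y)$ is obtained from the one in $\nu(x)$ by left multiplication by $v^{-1} \in W$, so the two vectors lie in the same $W$-orbit.

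Finally, I would invoke the fact that $\mu \mapsto \mu^+$ selects the unique dominant representative of the $W$-orbit of $\mu$ in $Q^\vee \otimes_\Z \Q$, which implies $(v^{-1}\mu)^+ = \mu^+$ for any $v \in W$ and any $\mu$. Applying this with $\mu = \frac{1}{m}\sum_{i=1}^m w^i(v\lambda)$ yields $\nu(y) = \nu(x)$, as required. The ``main obstacle'' here is essentially trivial: one just has to make sure that Proposition \ref{T:NPforW} applies to both sides without having to pass through the general definition of the Newton map on $G(F)$, which is justified in Section \ref{S:NPsDef} where the authors note that for elements of $\widetilde{W}$ the formula $\eqref{E:NPforW}$ serves as the working definition.
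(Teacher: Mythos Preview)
Your proof is correct and follows essentially the same route as the paper: both arguments reduce to the identity $(v^{-1}wv)^i = v^{-1}w^iv$ together with the $W$-invariance of the map $\mu \mapsto \mu^+$. The only cosmetic difference is that the paper starts from the sum defining $\nu(x)$ and factors out $v$, whereas you start from $\nu(y)$ and factor out $v^{-1}$; you also make explicit the (implicit in the paper) observation that conjugate elements have the same order.
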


\begin{proof}
Suppose that the order of $w \in W$ equals $m$, and consider the following expression:
\begin{equation}\label{E:DomReductionCalc}
\frac{1}{m}\sum\limits_{i=1}^m w^i(v\lambda) = \frac{1}{m}\sum\limits_{i=1}^m vv^{-1}w^i(v\lambda)  = v\left( \frac{1}{m}\sum\limits_{i=1}^m \left(v^{-1}wv \right)^i (\lambda) \right).
\end{equation}
Since we know from Proposition \ref{T:NPforW} that $\nu(x) = \left(\frac{1}{m}\sum\limits_{i=1}^m w^i(v\lambda)\right)^+$, then the expression on the right in \eqref{E:DomReductionCalc} has the same Newton point as $x$.  That is, since we take the unique dominant element in the $W$-orbit of this expression, applying $v$ has no effect on the calculation of the Newton point, proving that $\nu(x) = \nu(t^{\lambda}v^{-1}wv)$. 
\end{proof}

\end{subsection}

\begin{subsection}{Root hyperplanes and convexity}

In order to apply Lemma \ref{T:AtiyahBott} to compare pairs of Newton points, we need a criterion which easily determines whether or not a point in $\R^r$ lies in the convex hull of the $W$-orbit of another.  In light of Lemma \ref{T:Projection}, we are particularly interested in the $W$-orbit of points of the form $\nu_i(x)$ which lie on some wall $H_{\alpha_i}$ of the dominant Weyl chamber.  Lemma \ref{T:FiFormula} below defines a linear functional which determines the hyperplane containing a codimension one face of $\operatorname{Conv}(W \nu_i(x))$. 

\begin{defn}
Define a hyperplane $H_{\widehat{\alpha}_i}$ in $\R^r$ which has basis $\Delta \backslash \{\alpha_i\}$ for some $i \in \{ 1, \dots, r\}$. 
\end{defn}

\noindent The hyperplanes $H_{\widehat{\alpha}_i}$ also appear in \cite{HitzForum} where they are called \emph{dual hyperplanes}; the terminology comes from the fact that the hyperplane $H_{\widehat{\alpha}_i}$ is orthogonal to the fundamental weight $\omega_i$, which we formalize in the following lemma.  In order to determine whether a point in $(Q^\vee \otimes_{\Z}\Q)^+$ lies on one side or the other of the hyperplane $H_{\widehat{\alpha}_i}$, we also require a positivity statement.

\begin{lemma}\label{T:FiFormula}
The hyperplane $H_{\widehat{\alpha}_i}$ is determined by the linear functional $p_i : \R^r \longrightarrow \R$ given by
\begin{equation}
p_i(\vec{v}) :=  \left\langle \vec{v}, \omega_i \right\rangle = 0.
\end{equation}
Moreover, for any $\alpha \in R^+$ and any $1 \leq i \leq r$, we have $p_i(\alpha^{\vee}) \geq 0.$
\end{lemma}

\begin{proof}
Write $\vec{v} = c_1\alpha_1^\vee + \cdots + c_r \alpha_r^\vee$, and recall that $\langle \alpha_j^\vee, \omega_i \rangle = \delta_{ji}$.  Therefore, $p_i(\vec{v})=0$ if and only if $c_i = 0$, or equivalently, if and only if $\vec{v} \in H_{\widehat{\alpha}_i}$.  Similarly, if $\alpha \in R^+$ is a positive root, then $\alpha^\vee = d_1\alpha_1^\vee + \cdots + d_r \alpha_r^\vee$ where all $d_i \geq 0$, and so $p_i(\alpha^\vee) = d_i \geq 0$.
\end{proof}

\begin{remark}
The functional $p_i$ can be described geometrically as the orthogonal projection of $\R^r$ onto the subspace $\R\omega_i$ and thus coincides with the map $\operatorname{pr}_{(i)}$ in \cite[\textsection 6]{Ch}, which also motivates our choice of notation.
\end{remark}

\end{subsection}

\begin{subsection}{Compatibility of superregularity hypotheses}

Our next lemma requires a slight variation on our previous superregularity hypothesis on $\lambda$, and the new ingredient arises from decomposing the highest coroot in $R^\vee$ in terms of the simple coroots.  Denote the highest \emph{root} by $\theta$ (also often denoted in the literature by $\tilde{\alpha}$), which has the property that $\beta \leq \theta$ for any $\beta \in R$.  Since $R^\vee$ is also a root system, there is similarly a coroot which is the maximum element in $R^\vee$ with respect to dominance order.  We call this coroot the \emph{highest coroot}, and denote it by $\tilde{\mu}$.  

We provide a table of values for $\tilde{\mu}$ for the reader's convenience in Table \ref{table:HighestCoroot}, although this table can easily be obtained from the corresponding information on the highest root by duality; compare Table 4.1 in \cite{Hum}.  (Note that $\tilde{\mu} \neq \theta^\vee$ unless $G$ is simply laced; \textit{i.e.} $\tilde{\mu} = \theta^\vee$ if and only if $G$ is of type $A, D$ or $E$.)  We follow the conventions in \cite{Bour46} for labeling the simple (co)roots, which are consistent with the conventions in \cite{Hum}.  

We now extract the essential information from Table \ref{table:HighestCoroot} for the purpose of our next lemma.  

\begin{table}[t]
\begin{center}
    \begin{tabular}{| c | c |}
    \hline
    $G$ & expression for the highest coroot \\ \hline
    $A_n$ & $\alpha_1^\vee + \cdots + \alpha_n^\vee$ \\ \hline
    $B_n$ & $2\alpha_1^\vee + \cdots +2\alpha^\vee_{n-1}+\alpha_n^\vee$ \\ \hline
    $C_n$ & $\alpha_1^\vee + 2\alpha_2^\vee + \cdots +2\alpha^\vee_n$ \\ \hline
    $D_n$ & $\alpha_1^\vee + 2\alpha_2^\vee + \cdots +2\alpha^\vee_{n-2}+\alpha_{n-1}^\vee + \alpha_n^\vee$ \\ \hline
    $E_6$ & $\alpha_1^\vee + 2\alpha_2^\vee + 2\alpha_3^\vee + 3\alpha_4^\vee + 2\alpha_5^\vee + \alpha_6^\vee$ \\ \hline
    $E_7$ & $2\alpha_1^\vee + 2\alpha_2^\vee + 3\alpha_3^\vee + 4\alpha_4^\vee + 3\alpha_5^\vee + 2\alpha_6^\vee + \alpha_7^\vee$ \\ \hline
    $E_8$ & $2\alpha_1^\vee + 3\alpha_2^\vee + 4\alpha_3^\vee + 6\alpha_4^\vee + 5\alpha_5^\vee + 4\alpha_6^\vee + 3\alpha_7^\vee + 2\alpha_8^\vee$ \\ \hline
    $F_4$ & $2\alpha_1^\vee + 4\alpha_2^\vee + 3\alpha_3^\vee + 2\alpha_4^\vee$ \\ \hline
    $G_2$ & $2\alpha_1^\vee + 3\alpha_2^\vee$ \\ \hline
    \end{tabular}
\end{center}
\vskip 5pt
\caption{The highest coroot $\tilde{\mu}$ satisfies $\tilde{\mu} \geq \beta^\vee$ for all $\beta^\vee \in R^\vee$.}
\label{table:HighestCoroot}
\end{table}

\begin{defn}\label{D:cG}
Define a constant $1 \leq c_G \leq 6$ to be the maximum integer required to express the highest coroot in the basis of simple coroots.  More specifically, define
\begin{equation}\label{E:cG}
c_G = 
\begin{cases}
1 & \text{if $G = A_n$,}\\
2 & \text{if $G = B_n, C_n,$ or $D_n$,}\\
3 & \text{if $G = E_6$ or $G_2$,}\\
4 & \text{if $G = E_7$ or $F_4$,}\\
6 & \text{if $G = E_8$}.
\end{cases}
\end{equation}
\end{defn}

Before proceeding with the proof of our next lemma, we define a single constant which unifies the various required bounds on $\langle \lambda, \alpha_j \rangle$ from previous sections with the superregularity hypothesis necessary for Lemma \ref{T:FiTranslation} below.

\begin{defn}\label{D:Mk}
Given a pair $w,v\in W$ of finite Weyl group elements, denote by $k$ the minimum length of any path from $w^{-1}v$ to $v$ in the quantum Bruhat graph for $W$. Define
\begin{equation}\label{E:Mkformula}
M_k = 
\begin{cases}
\max \{2\ell(w_0)+2k , 2kc_G\}, & \text{if $G\neq G_2$};\\
\max \{3\ell(w_0)+3k , 2kc_G\}, & \text{if $G= G_2$}.
\end{cases}
\end{equation}
For the interested reader, a straightforward type-by-type calculation using the values of $c_G$ from Definition \ref{D:cG} as well as Property (2) from Proposition \ref{T:QBGcomb} which says that $k \leq \ell(w_0)$ leads to the following values for the constant $M_k$:
\begin{equation}\label{E:Mk}
M_k = 
\begin{cases}
2\ell(w_0)+2k & \text{if $G = A_n, B_n, C_n,$ or $D_n$,}\\
3\ell(w_0)+3k & \text{if $G = G_2$,}\\
2\ell(w_0)+2k & \text{if $G = E_6$ and $k \leq \frac{\ell(w_0)}{2}$,}\\
6k & \text{if $G = E_6$ and $k> \frac{\ell(w_0)}{2}$,}\\
2\ell(w_0)+2k & \text{if $G = E_7$ or $F_4$ and $k \leq \frac{\ell(w_0)}{3}$,}\\
8k & \text{if $G = E_7$ or $F_4$ and $k > \frac{\ell(w_0)}{3}$,}\\
2\ell(w_0)+2k & \text{if $G = E_8$ and $k \leq \frac{\ell(w_0)}{5}$,}\\
12k & \text{if $G = E_8$ and $k > \frac{\ell(w_0)}{5}$.}
\end{cases}
\end{equation}
\end{defn}

\end{subsection}

\subsection{Root hyperplanes for maximal translations}

We now present a lemma which represents both a culmination of the collection of technical results in this section thus far, as well as the crux of the main argument in the second of two key propositions.  In words, Lemma \ref{T:FiTranslation} guarantees that the Newton point for any translation immediately below $x$ in Bruhat order lies outside of the convex hull of the Newton point of $x$ itself, an observation that is critical in the proof of Proposition \ref{T:trans} in the next section.

\begin{lemma}\label{T:FiTranslation}
Let $x = t^{v\lambda}w \in \widetilde{W}$, and assume that $x \gtrdot x_1 \gtrdot  \cdots \gtrdot x_k = t^{\mu}$ is any minimal length saturated chain from $x$ to a pure translation, where $k\geq 1$.  Suppose that $\langle \lambda, \alpha_j \rangle > M_k$ for all simple roots  $\alpha_j \in \Delta$.  Then for any $i \in [r]$ such that $\nu_i(x) \geq \nu(x)$, we have 
\begin{equation}
p_i(\mu^+) > p_i(\nu_i(x)).
\end{equation}
\end{lemma}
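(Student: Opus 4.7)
My plan is to translate the hypothesis of a length-$k$ minimal saturated chain into explicit data via the quantum Bruhat graph, to identify $\mu^+$ with a concrete expression $\lambda - d$ where $d$ is the QBG path weight, and then to evaluate $F_i$ at both $\mu^+$ and $\nu_i(x)$ by direct calculation.

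First I would invoke Proposition~\ref{T:QBGpaths}($i$): the superregularity hypothesis $\langle \lambda,\alpha_j\rangle > M_k$ immediately implies the weaker hypothesis in Proposition~\ref{T:QBGpaths} by the definition of $M_k$, and Sublemma~\ref{T:Length0} gives $\ell(x) > k$, so the chain corresponds to a unique path of length $k$ in the quantum Bruhat graph from $w^{-1}v = w^{-1}$ to $v = 1$ (here $v=1$ since $\lambda$ is dominant by superregularity). Let $d = d_1\alpha_1^\vee + \cdots + d_r\alpha_r^\vee$ denote the weight of this path.

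Next I would track the dominant representative of the translation part through the chain by applying Proposition~\ref{T:cocover} together with Remark~\ref{T:cocoverQBG} at each step. Cases (1) and (3), which correspond to upward QBG edges, alter the translation only by the $W$-action and therefore leave its dominant representative unchanged; cases (2) and (4), which correspond to downward edges labeled $\alpha^\vee$, decrease the dominant representative by exactly $\alpha^\vee$. Summing over the $k$ edges yields $\mu^+ = \lambda - d$. To justify this at each intermediate stage, I would bound every partial sum $d^{(j)}$ via Table~\ref{table:HighestCoroot}: each coroot contribution has simple-coroot coefficients at most $c_G$, so $(d^{(j)})_m \leq k c_G$ for every $m$, and then
\begin{equation*}
\langle \lambda - d^{(j)},\alpha_m\rangle \;\geq\; \langle\lambda,\alpha_m\rangle - 2(d^{(j)})_m \;>\; M_k - 2k c_G \;\geq\; 0,
\end{equation*}
so each $\lambda - d^{(j)}$ is dominant and the dominant orbit representative evolves by simple coroot subtraction throughout the chain.

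With $\mu^+ = \lambda - d$ in hand and $\nu_i(x) = \lambda - \tfrac12\langle\lambda,\alpha_i\rangle\,\alpha_i^\vee$, Lemma~\ref{T:FiFormula} (which gives $F_i(\alpha_j^\vee) = 0$ for $j \neq i$) reduces the difference to
\begin{equation*}
F_i(\mu^+) - F_i(\nu_i(x)) \;=\; \Bigl(\tfrac{\langle\lambda,\alpha_i\rangle}{2} - d_i\Bigr)\,F_i(\alpha_i^\vee).
\end{equation*}
A short argument gives $F_i(\alpha_i^\vee) > 0$: by Lemma~\ref{T:FiPos} this quantity is nonnegative, and since $\alpha_i^\vee$ does not lie in the span of $\{\alpha_j^\vee : j \neq i\}$, which equals $H_{\widehat{\alpha}_i}$, the nonzero linear functional $F_i$ cannot vanish on $\alpha_i^\vee$. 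The inequality $d_i \leq kc_G$ combined with $\langle\lambda,\alpha_i\rangle > M_k \geq 2kc_G$ then forces $\tfrac12\langle\lambda,\alpha_i\rangle > d_i$, completing the argument.

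The main obstacle is the tracking step: because a case~(2) cocover genuinely moves $x_j$ out of the dominant chamber, an arbitrary interleaving of the four cases in Proposition~\ref{T:cocover} must be handled, and the role of the superregularity hypothesis beyond the assumptions of Proposition~\ref{T:QBGpaths} is precisely to guarantee that each intermediate partial-sum $\lambda - d^{(j)}$ remains dominant so that the successive dominant-representative computations telescope to $\mu^+ = \lambda - d$.
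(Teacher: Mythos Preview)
Your proof is correct and follows essentially the same approach as the paper's. The only notable differences are stylistic: the paper introduces an auxiliary element $\mu' = \lambda - k\tilde{\mu}$ and bounds $F_i(\mu^+) \geq F_i(\mu')$ via Lemma~\ref{T:FiPos} before comparing to $F_i(\nu_i(x))$, whereas you compute $F_i(\mu^+) = F_i(\lambda) - d_i\,F_i(\alpha_i^\vee)$ directly using $F_i(\alpha_j^\vee)=0$ for $j\neq i$; both routes reduce to the same inequality $d_i \leq kc_G < \tfrac12\langle\lambda,\alpha_i\rangle$. Your explicit justification that $F_i(\alpha_i^\vee)>0$ is a small improvement over the paper, which uses this strict positivity without comment, and your careful dominance tracking at each intermediate step is more detail than strictly necessary, since the paper dispatches the identity $\mu^+ = \lambda - \sum\beta^\vee$ by a direct appeal to Proposition~\ref{T:QBGpaths}.
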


\begin{proof} 
Since $k \geq 1$, then we know that the finite part $w \neq 1$.  In addition, the superregularity hypothesis on $\lambda$ implies that $\lambda$ is dominant.  Therefore, Lemma \ref{T:Projection} says there exists $1 \leq i \leq r$ such that $ \nu_i(x) \geq \nu(x)$.

Note that all of the hypotheses of Proposition \ref{T:QBGpaths} apply under our current superregularity assumption on $\lambda$.  Since $x$ and $t^\mu$ differ by a sequence of covering relations, Proposition \ref{T:QBGpaths} says that $\mu^+ = \lambda - \sum \beta^{\vee}$, where this sum is a nonnegative combination of at most $k$ positive coroots $\beta^\vee \in R^\vee$, corresponding to the downward edges in any minimal length path from $w^{-1}v$ to $v$ in the quantum Bruhat graph, which also has length $k$.

We now define an element $\mu' = \lambda - k\tilde{\mu} \in Q^\vee$, where $\tilde{\mu}$ is the highest coroot in $R^\vee$.  The element $\mu'$ is designed to be a lower bound for $\mu^+$ in the following sense:
\begin{equation}
\mu^+ - \mu ' = \left( \lambda - \sum \beta^\vee \right) - \left( \lambda - k\tilde{\mu}\right) = k\tilde{\mu} - \sum\beta^\vee.
\end{equation}
Since there are at most $k$ summands in the rightmost sum, and for each of them we know that $\tilde{\mu}\geq \beta^\vee$, then this difference is a nonnegative linear combination of coroots.  Therefore, $\mu^+ - \mu ' \geq 0,$ and so by Lemma \ref{T:FiFormula} and linearity we have $p_i(\mu^+) \geq p_i(\mu')$.

To prove the current lemma, it therefore suffices to verify that $ p_i(\mu') > p_i(\nu_i(x))$.   Compute that
\begin{equation}
p_i(\mu')  = p_i(\lambda - k \tilde{\mu}) = p_i(\lambda) - kp_i(\tilde{\mu}).
\end{equation}
Now recall that $\nu_i(x)$ is the projection of $\lambda$ onto the hyperplane $H_{\alpha_i}$, which means that $\nu_i(x) = \lambda - \frac{1}{2}\langle \lambda, \alpha_i \rangle \alpha_i^{\vee}$.  Therefore,
\begin{equation}
p_i(\nu_i(x))  = p_i(\lambda) - \frac{\langle \lambda, \alpha_i \rangle}{2}.
\end{equation}
Taking the difference of the two previous equations and using the positivity in Lemma \ref{T:FiFormula}, we see that $ p_i(\mu') > p_i(\nu_i(x))$ if and only if 
\begin{equation}\label{E:FiDiff}
kp_i(\tilde{\mu}) <  \frac{\langle \lambda, \alpha_i \rangle}{2}.
\end{equation}
Expanding $\tilde{\mu}$ in terms of the basis of simple coroots as in Table \ref{table:HighestCoroot}, we have $\tilde{\mu} = c_1\alpha_1^{\vee} + \cdots + c_r\alpha_r^{\vee}$ for some $c_i \in \Z_{\geq 0}$.  Therefore,
$p_i(\tilde{\mu}) =  c_i,$
since by construction $p_i(\alpha_j^{\vee}) = \delta_{ji}$.  For each root system we have $c_i \leq c_G$ by the definition of $c_G$.   Therefore, we have
\begin{equation}
p_i(\tilde{\mu}) \leq c_G.
\end{equation}  
By our superregularity hypothesis on $\lambda$, we also know that $\langle \lambda, \alpha_i \rangle > 2kc_G$.  Putting these observations together, we see that 
\begin{equation}
kp_i(\tilde{\mu}) \leq  kc_G  =  \frac{2kc_G}{2} < \frac{\langle \lambda, \alpha_i \rangle}{2},
\end{equation}
and so $p_i(\mu^+) \geq p_i(\mu') > p_i(\nu_i(x))$, as desired.
\end{proof}

\subsection{Relating dominance and affine Bruhat orders}\label{sec:DomBruhat}

In order to treat one very special case in the proof of Proposition \ref{T:trans}, we require several final lemmas relating the affine Bruhat order to the dominance order.  
The next lemma shows that if we fix an element $x \in \widetilde{W}$ sufficiently far from the walls of any Weyl chamber, then there exists a unique maximum element below $x$ which lies in a particular Weyl chamber and has a prescribed finite part.  The proof of Lemma \ref{lem:MaxFinPtw} proceeds by repeated application of Proposition \ref{T:cocover}, and is thus similar to the proof of Proposition \ref{T:QBGpaths}.

\begin{lemma}\label{lem:MaxFinPtw}
Let $x=t^{v\lambda}w \in \widetilde{W}$, and suppose that $\langle \lambda, \alpha_i\rangle > 6\ell(w_0)$ for all simple roots $\alpha_i \in \Delta$. Fix a pair of elements $v', w' \in W$. Then there exists $\lambda' \in Q^+$ such that 
\begin{enumerate}
\item the element defined by $x' = t^{v'\lambda'}w'$ satisfies $x' \leq x$, and 
\item if $y=t^{v'\mu}w' \leq x$ is any element which both lies in the same Weyl chamber and has the same finite part as $x'$, then $\mu \leq \lambda'$.
\end{enumerate}
\end{lemma}

\begin{proof}
First, recall by Proposition \ref{T:cocover} that every edge of the form $vr_\alpha \longrightarrow v$ in the quantum Bruhat graph corresponds to a covering relation of the form $x\gtrdot t^{vr_\alpha(\mu)}r_{v\alpha}w$, where $\mu = \lambda$ or $\mu=\lambda-\alpha^\vee$, depending on whether the edge $vr_\alpha \longrightarrow v$ is directed upward or downward, respectively.  Now consider any path of minimal length in the quantum Bruhat graph from the first given element $v'$ to $v$.  Note by Proposition \ref{T:QBGcomb} that this path has at most $\ell(w_0)$ edges, each of which contributes at most $\tilde{\mu}$ to the weight of the path, where recall that $\tilde{\mu}$ is the highest coroot.  By the superregularity hypothesis on $\lambda$ in the case where $G \neq G_2$, we see that 
\begin{equation}
\langle \lambda, \alpha_i \rangle >  4\ell(w_0) = (2\ell(w_0)+2)+(2\ell(w_0)-2) \geq (2\ell(w_0)+2) + (\ell(w_0)-1)\langle \tilde{\mu}, \alpha_i \rangle,
\end{equation}
where we have used the values of $\tilde{\mu}$ from Table \ref{table:HighestCoroot} to compute that $\langle \tilde{\mu}, \alpha_i \rangle \leq 2$ for all $\alpha_i \in \Delta$.  Rearranging, we obtain 
\begin{equation}
\langle \lambda - (\ell(w_0)-1)\tilde{\mu}, \alpha_i \rangle > 2\ell(w_0)+2,
\end{equation}
which means that the superregularity hypothesis required to apply Proposition \ref{T:cocover} to each edge in this path from $v'$ to $v$ is satisfied.  Therefore, by repeated application of Proposition \ref{T:cocover}, any path of minimal length from $v'$ to $v$ in the quantum Bruhat graph corresponds to a sequence of covering relations from $x$ to some element in the $v'$-chamber. (Using the same argument, if $G = G_2$, then the superregularity hypothesis $\langle \lambda-(\ell(w_0)-1)\tilde{\mu}, \alpha_i \rangle > 3\ell(w_0)+3$ is required to apply Proposition \ref{T:cocover} to every edge in the path from $v'$ to $v$.   Since $\langle \tilde{\mu}, \alpha_i \rangle \leq 1$ in type $G_2$, we instead require $\langle \lambda, \alpha_i \rangle > 4\ell(w_0)+2$.)

Next, recall by Proposition \ref{T:cocover} that every edge in the quantum Bruhat graph of the form $w^{-1}v \longrightarrow w^{-1}vr_\alpha$ corresponds to a covering relation of the form $x \gtrdot t^{v(\mu)}r_{v\alpha}w$, where again $\mu = \lambda$ or $\mu=\lambda -\alpha^\vee$ depending on the direction of this edge.  Compute as in \eqref{E:QBGe1} that $(r_{v\alpha}w)^{-1}v = w^{-1}vr_\alpha$.  Therefore, by the same argument as above, under the same superregularity hypothesis $\langle \lambda, \alpha_i \rangle >4\ell(w_0)$ used in the previous argument (or $\langle \lambda,\alpha_i \rangle > 4\ell(w_0)+2$ in type $G_2$), any path of minimal length in the quantum Bruhat graph from $w^{-1}v$ to the product $(w')^{-1}v$, determined by the second in the pair of given elements, corresponds to a sequence of covering relations from $x$ to an element in the same Weyl chamber, but having finite part $w'$.

Putting these observations together, by first applying the affine reflections corresponding to any path of minimal length in the quantum Bruhat graph from $v'$ to $v$, we obtain an element $y' = t^{v'(\mu)}w'' \leq x$ in the $v'$-chamber, where $\mu \geq \lambda - \ell(w_0)\tilde{\mu}$. Using the full strength of the superregularity hypothesis $\langle \lambda, \alpha_i \rangle > 6\ell(w_0)$ for $G \neq G_2$, and the fact that $\langle \tilde{\mu}, \alpha_i \rangle \leq 2$ easily verified uising Table \ref{table:HighestCoroot}, we then have
\begin{equation}
\langle \mu, \alpha_i \rangle \geq \langle \lambda-\ell(w_0)\tilde{\mu}, \alpha_i \rangle \geq \langle \lambda, \alpha_i \rangle - \ell(w_0)\langle \tilde{\mu}, \alpha_i \rangle > 6\ell(w_0) - 2\ell(w_0) = 4\ell(w_0).
\end{equation}
(In type $G_2$, since $\langle \lambda, \alpha_i \rangle > 6\ell(w_0) > 5\ell(w_0)+2$ and $\langle \tilde{\mu}, \alpha_i \rangle \leq 1$, we are also guaranteed that $\langle \mu, \alpha_i \rangle > 4\ell(w_0)+2,$ as required to repeatedly apply Proposition \ref{T:cocover}.)
Therefore, applying the second of the two previous arguments, now instead fixing any path of minimal length from $(w'')^{-1}v'$ to $(w')^{-1}v'$ in the quantum Bruhat graph, we obtain a saturated chain from $y'$ to an element of the form $y=t^{v'(\mu')}w'$ which still lies in the $v'$-chamber and now also has finite part $w'$.  In addition, by our construction of repeatedly applying covering relations as in Proposition \ref{T:cocover}, we have $y \leq y' \leq x$.  

What we have shown thus far is that given $x=t^{v\lambda}w$ such that $\langle \lambda, \alpha_i \rangle >6\ell(w_0)$ and any pair of elements $v',w' \in W$, there exists an element of the form $y = t^{v'\mu}w'$ such that $y \leq x$.  It thus remains to show the uniqueness of a maximum $\mu \in Q^+$ such that $y \leq x$ is of this form.  Consider any saturated chain $x=x_0 \gtrdot x_1 \gtrdot \cdots \gtrdot x_m = x'$, where $x' = t^{v'\lambda'}w'$ with  $\lambda' \in Q^+$, and assume without loss of generality that $m$ is minimal such that $x'$ has this desired form.  Since $m$ is minimal and thus $m \leq 2\ell(w_0)$, by the same arguments provided above, each covering in this saturated chain corresponds to an edge in the quantum Bruhat graph.  Moreover, each edge of type (1) or (2) in the statement of Proposition \ref{T:cocover} can be concatenated to form a path from $v'$ to $v$ in the quantum Bruhat graph.  Define $j \in [m]$ to be the smallest value such that $x_{j-1} \gtrdot x_j$ is of type (1) or (2), meaning that each of the covering relations $x=x_0 \gtrdot x_1 \gtrdot \cdots \gtrdot x_{j-1}$ is of type (3) or (4) in the statement of Proposition \ref{T:cocover}, so that $x=x_0, x_1, \dots, x_{j-1}$ all lie in the $v$-chamber.  Record the path in the quantum Bruhat graph corresponding to these first $j-1$ covering relations as
\begin{equation}\label{E:localseq}
w^{-1}v \longrightarrow w^{-1}vr_{\gamma_1}  \longrightarrow \cdots \longrightarrow w^{-1}vr_{\gamma_1}\cdots r_{\gamma_{j-2}}r_\beta,
\end{equation}
 where we write $\beta = \gamma_{j-1}$ to distinguish this edge.  By definition of $j$ and Proposition \ref{T:cocover}, the next covering relation $x_{j-1} \gtrdot x_j$ instead corresponds to an edge of the form $vr_\alpha \longrightarrow v$ in the quantum Bruhat graph. Denote the elements in this subsequence of covering relations by $x_i = t^{v_i(\lambda_i)}w_i$ for all $i \in [j]$.  We record in particular that by Proposition \ref{T:cocover}, we have 
  \begin{alignat}{7}
w_{j-1} &={}&& r_{v\beta}r_{v\gamma_2}\cdots r_{v\gamma_1}w &\quad\quad\quad
v_{j-1}&={}&&v \\
 w_j &={}&& r_{v\alpha}r_{v\beta}r_{v\gamma_2}\cdots r_{v\gamma_1}w & 
 v_{j}&={}&&vr_{\alpha}.
\end{alignat}

We now modify this saturated chain by keeping $x_i$ for all $0\leq i \leq j-2$ the same, and then replacing the elements $x_i$ for $i \in \{j-1, j\}$ with two elements $x_i' = t^{v'_i(\lambda_i')}w_i'$ defined as follows:
 \begin{alignat}{7}
w'_{j-1} &={}&& r_{v\alpha}r_{v\gamma_2}\cdots r_{v\gamma_1}w &\quad\quad\quad
v'_{j-1}&={}&&vr_{\alpha}\\
 w'_j &={}&& r_{vr_\alpha(\beta)}r_{v\alpha}r_{v\gamma_2}\cdots r_{v\gamma_1}w & 
 v'_{j}&={}&&vr_{\alpha}.
 \end{alignat}
The corresponding modified subsequence of covering relations $x=x_0  \gtrdot \cdots \gtrdot x_{j-2} \gtrdot x'_{j-1} \gtrdot x'_j$ differs from the original by interchanging the type of the two final covering relations. Note that $v_j = v'_j$, and we claim that $w_j = w'_j$ as well.  To see this second equality, we simply compute that 
\begin{equation}
r_{vr_\alpha(\beta)}r_{v\alpha}  = (vr_\alpha r_\beta r_\alpha v^{-1})(vr_\alpha v^{-1}) = vr_\alpha v^{-1} v r_\beta v^{-1} = r_{v\alpha}r_{v\beta}.
\end{equation}
By the same argument as in the proof of Proposition \ref{T:QBGpaths}, exactly one of the two elements $v_i$ or $w_i^{-1}v_i$ changes with each covering relation; see \eqref{E:1vertexchanges}.  Therefore, $x_{j-2} \gtrdot x'_{j-1}$ is now the covering relation of type (1) or (2) corresponding to the edge $vr_\alpha \longrightarrow v$, while $x'_{j-1} \gtrdot x'_j$ is the same covering relation of type (3) or (4) corresponding to the final edge in \eqref{E:localseq}.  Since both the (truncated) original and this new saturated chain correspond to the same pair of paths of minimum length in the quantum Bruhat graph, then $\lambda_i = \lambda'_i$ as well, again by iterated application of Proposition \ref{T:cocover}.  Therefore, in fact, $x_j = x'_j$, and we may thus continue the chain with the original remaining covering relations
\begin{equation}
x=x_0 \gtrdot x_1 \gtrdot \cdots x_{j-2} \gtrdot x'_{j-1} \gtrdot x_j \gtrdot \cdots \gtrdot x_m = x'.
\end{equation}

By iterating this argument, we can continue to move all covering relations of type (1) or (2) to the left in this sequence, which will then also cluster all covering relations of type (3) or (4) to the right.  In other words, we may assume without loss of generality that this saturated chain from $x$ to $x'$ starts by applying a sequence of all non-local covering relations, each of which changes the Weyl chamber corresponding to the edges in some minimal path from $v'$ to $v$ in the quantum Bruhat graph.  The remainder of the saturated chain then still corresponds to a minimal path from the original vertex $w^{-1}v$ to $(w')^{-1}v'$ in the quantum Bruhat graph; again see \eqref{E:1vertexchanges}.  This second portion of the saturated chain can thus be assumed to perform a sequence of local covering relations entirely within the $v'$-chamber.  However, the altered sequence still terminates at the same element $x'$, which both has finite part $w'$ and lies in the $v'$-chamber.  

Every saturated chain of minimal length from $x=t^{v\lambda}w$ to an element of the form $x' = t^{v'\lambda'}w'$ thus corresponds to a pair of paths of minimal length in the quantum Bruhat graph. Recall by Proposition \ref{T:QBGcomb} that every minimal path from $v'$ to $v$ has the same weight, as does every minimal path from $w^{-1}v$ to $(w')^{-1}v'$.  Since we may assume without loss of generality that we first perform the covering relations corresponding to the edges of the path from $v'$ to $v$, and then follow with the covering relations corresponding to the path from $w^{-1}v$ to $(w')^{-1}v'$, the difference between $\lambda$ and $\lambda'$ is given precisely by the sum of the weights of these two minimal paths.  In particular, by Proposition \ref{T:QBGcomb}, we record for future use that
\begin{equation}\label{eq:lam'bound}
 \lambda' \geq \lambda - 2\ell(w_0)\tilde{\mu},
\end{equation}
but the exact value of $\lambda'$ is unique by Proposition \ref{T:QBGcomb}.
\end{proof}

Our final lemma says that, given an element $x = t^{v\lambda}w \in \widetilde{W}$, if we translate $x$ by a negative simple root with respect to $v$, the resulting alcove remains below $x$ in Bruhat order. It seems likely that this phenomenon is well-known, but we provide a proof for the sake of completeness. The strategy closely mirrors parts of the proof of Proposition \ref{T:cocover}, since the key idea is to express $y$ via a pair of covering relations in the Bruhat order.

\begin{lemma}\label{lem:DomBruhat}
Let $x = t^{v\lambda}w \in \widetilde{W}$ for $\lambda \in Q^+$ dominant, and suppose that $x$ lies in the Weyl chamber corresponding to $v \in W$.  If $y = t^{v(\lambda - \alpha^\vee_i)}w$ also lies in the $v$-chamber, then $y \leq x.$
\end{lemma}

\begin{proof}
Note by definition that $y = t^{-v \alpha_i^\vee} x$.  Our goal will thus be to express the translation $t^{-v\alpha_i^\vee}$ as the product of two affine reflections, such that the length decreases at each step.  
The alcove $x$ lies between two consecutive hyperplanes $H_{v\alpha_i, m}$ and $H_{v\alpha_i, m+1}$, where there are two possible values for $m \in \{\langle v\lambda, v\alpha_i \rangle, \langle v\lambda, v\alpha_i \rangle-1\} = \{ \langle \lambda, \alpha_i \rangle, \langle \lambda, \alpha_i \rangle -1 \}$.
  For any $m \in \Z$, compute that 
\begin{equation}\label{eq:transcomp}
\left(t^{(m-1)v\alpha_i^\vee}r_{v\alpha_i}\right) \left(t^{mv\alpha_i^\vee}r_{v\alpha_i} \right)= t^{(m-1)v\alpha_i^\vee + m(-v\alpha_i^\vee)} = t^{-v\alpha_i^\vee}.
\end{equation}
Therefore, $y$ can certainly be obtained from $x$ by performing a sequence of two affine reflections.  

In order to choose a pair of reflections such that the length decreases at each step, we proceed similarly to the proof of Proposition \ref{T:cocover}.  Our current hypothesis that $x$ lies in the $v$-chamber, even if $\lambda$ is not regular, implies that the length formula \eqref{E:xlength} still applies, since the regularity assumption in Lemma \ref{T:xlength} is required to guarantee that the element $v$ coincides with the Weyl chamber containing $x$.   In particular, $\ell(x)=\langle \lambda, 2\rho \rangle-\ell(w^{-1}v) + \ell(v).$
Now define $r_{\beta_1} = t^{mv\alpha_i^\vee}r_{v\alpha_i}$, and compute as in \eqref{E:cocoverform} that 
\begin{equation}
r_{\beta_1}x = t^{v(\lambda-(\langle \lambda, \alpha_i\rangle-m)\alpha_i^\vee)}r_{v\alpha_i}w,
\end{equation}
where the finite part $r_{v\alpha_i}w = vs_iv^{-1}w$, so that its inverse equals $(r_{v\alpha_i}w)^{-1} = (w^{-1}vs_i)v^{-1}$.
Since $s_i$ is a simple reflection, we know that $\ell(w^{-1}vs_i) = \ell(w^{-1}v) \pm 1$.  As in the proof of Proposition \ref{T:cocover}, since $| \langle \lambda, \alpha_i \rangle -m| \leq \ell(w_0)+1$ for $m \in \{ \langle \lambda, \alpha_i \rangle, \langle \lambda, \alpha_i \rangle -1 \}$, then $\lambda - (\langle \lambda, \alpha_i \rangle -m)\alpha_i^\vee$ is dominant.  Further, since the translation $y$ of $x$ by $-v\alpha_i^\vee$ remains in the $v$-chamber by hypothesis, then so does $r_{\beta_1}x$ for either of these two values of $m$.  Therefore, we may again apply \eqref{E:xlength} to compute
\begin{equation}
\ell(r_{\beta_1}x) = \langle \lambda - (\langle \lambda, \alpha_i \rangle -m)\alpha_i^\vee, 2\rho \rangle - \ell(w^{-1}vs_i)+\ell(v).
\end{equation}
If $\ell(w^{-1}vs_i) = \ell(w^{-1}v)+1$, using the value $m=\langle \lambda, \alpha_i \rangle$ gives $\ell(r_{\beta_1}x) = \ell(x)-1$.  Similarly, if $\ell(w^{-1}vs_i) = \ell(w^{-1}v)-1$, using the value $m=\langle \lambda, \alpha_i \rangle-1$ implies that $\ell(r_{\beta_1}x) = \ell(x)-1$.  In either scenario, we can choose  an affine reflection $r_{\beta_1}$ such that $r_{\beta_1}x \lessdot x$, as in cases (3) and (4) of Proposition \ref{T:cocover}.

Finally, define $r_{\beta_2} = t^{(m-1)v\alpha_i^\vee}r_{v\alpha_i}$ so that $r_{\beta_2}r_{\beta_1}x = t^{-v\alpha_i^\vee}x = y$, as shown in  \eqref{eq:transcomp}.  Since $y$ remains in the $v$-chamber by hypothesis, we again use \eqref{E:xlength} to compute
\begin{equation}
\ell(y) = \langle \lambda - \alpha_i^\vee, 2\rho \rangle - \ell(w^{-1}v)+\ell(v) = \ell(x) - \langle \alpha_i^\vee, 2\rho \rangle = \ell(x)-2.
\end{equation}
Therefore, $\ell(x) > \ell(r_{\beta_1}x) > \ell(r_{\beta_2}r_{\beta_1}x) = \ell(y)$, and $y \leq x$ by the definition of Bruhat order.
\end{proof}

By iterating Lemma \ref{lem:DomBruhat}, we obtain the following corollary relating the two natural partial orderings on all alcoves which lie in the same Weyl chamber and have the same finite part.

\begin{cor}\label{cor:DomBruhat}
Let $x=t^{v\lambda}w, \ y=t^{v\lambda'}w \in \widetilde{W}$, and suppose that both $x$ and $y$ lie in the $v$-chamber.  Further assume that $\langle \lambda, \alpha_i \rangle \geq 3$ for all $\alpha_i \in \Delta$, and also that $\rho^\vee \not> \lambda'$. If $\lambda \geq \lambda'$ in dominance order, then $x \geq y$ in Bruhat order.
\end{cor}

\begin{proof}
First note that our hypothesis that both $x$ and $y$ lie in the $v$-chamber implies that $\lambda, \lambda' \in Q^+$.  Therefore, the vector from $\lambda$ to $\lambda'$ is entirely contained in the dominant Weyl chamber. Now since $\lambda \geq \lambda'$, we may write  $\lambda - \lambda' = \sum c_i\alpha_i^\vee$ where all $c_i \in \Z_{\geq 0}$. Since $\langle \lambda, \alpha_i \rangle \geq 2$ for all $\alpha_i \in \Delta$, for any $j \in [r]$ such that $c_j \neq 0$, the vector from $\lambda-\alpha_j^\vee$ to $\lambda'$ remains in the dominant chamber. Moreover, note that for all $i \neq j$, we have $\langle \lambda - \alpha_j^\vee,\alpha_i \rangle  = \langle \lambda, \alpha_i \rangle - \langle \alpha_j^\vee, \alpha_i \rangle \geq \langle \lambda, \alpha_i \rangle$, since $\langle \alpha_j^\vee, \alpha_i \rangle$ is an off-diagonal entry of the Cartan matrix, and is hence nonpositive. The additional constant in the hypothesis $\langle \lambda, \alpha_i \rangle \geq 3$ further ensures that the corresponding alcove $x' = t^{v(\lambda-\alpha_j^\vee)}w$ remains in the $v$-chamber.  We may thus apply Lemma \ref{lem:DomBruhat} iteratively, choosing the order of application such that each of the intermediate alcoves $z= t^{v(\mu)}w$, where $\mu = \lambda - \sum b_i \alpha_i^\vee$ for some nonnegative integers $b_i \leq c_i,$ remains in the $v$-chamber. The remaining hypothesis $\rho^\vee \not> \lambda'$ guarantees that such a choice is possible, even if the end vertex $\lambda'$ lies on a wall of the dominant Weyl chamber.
\end{proof}


\section{Translations Dominate Newton Points}\label{S:TransPf}

We are now prepared to present our second of two key propositions, followed by the proof of our main theorem.  Recall Theorem \ref{T:Vform} which says that to compute $\nu_x$ we should take a maximum in dominance order over all $\nu(y)$ such that $x \geq y$.  The simple hint suggested by Lemma \ref{T:StrongMazur} and further supported by Lemma \ref{T:FiTranslation}, is that we can in fact reduce the problem to focusing exclusively on the pure translations below $x$.  Proposition \ref{T:trans} makes this reduction step precise, and the proof of Theorem \ref{T:main} follows immediately in Section \ref{S:MainPf}.  We conclude in Section \ref{S:HypRmks} with a discussion of the superregularity hypotheses required at various steps in the proof.

\begin{subsection}{Reduction to pure translations}\label{S:Reduction}

Our second key proposition says that in order to compute the generic Newton point in $IxI$, it suffices to find any maximal pure translation element which is less than or equal to $x$ in Bruhat order.  The proof uses critically both the correspondence to paths in the quantum Bruhat graph established in Proposition \ref{T:QBGpaths}, in addition to the entire sequence of lemmas from Section \ref{S:RootHyps}.  

One very special case in the proof of Proposition \ref{T:trans} requires a slightly stronger superregularlity hypothesis than has arisen in previous sections.  For this purpose, we define one final constant
\begin{equation}\label{E:MFormula}
M = M_k + 4\ell(w_0),
\end{equation}
where $M_k$ is as in Definition \ref{D:Mk}.

\begin{prop}\label{T:trans} 
Let $x=t^{v\lambda}w \in \widetilde{W}$, and consider any minimal length saturated chain in Bruhat order from $x$ to a pure translation, say $x \gtrdot x_1  \gtrdot \cdots \gtrdot x_k = t^{\mu}.$  If $ \langle \lambda, \alpha_j  \rangle > M$ for all simple roots $\alpha_j \in \Delta$,  then $\nu_x = \mu^+$. 
\end{prop}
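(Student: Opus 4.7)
The plan is to combine Viehmann's Theorem \ref{T:Vform} with the quantum-Bruhat-graph / affine-Bruhat-order correspondence of Proposition \ref{T:QBGpaths} and the geometric machinery of Section \ref{S:RootHyps}. The lower bound is immediate: since $t^\mu \leq x$ by construction, Theorem \ref{T:Vform} yields $\mu^+ = \nu(t^\mu) \leq \nu_x$. The content is the upper bound $\nu_x \leq \mu^+$, which I would prove in two stages: first reduce the computation of $\nu_x$ to a maximum over pure translations $\leq x$, then bound the dominant representatives of such translations by $\mu^+$ via the quantum Bruhat graph.

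For the first stage, let $z \leq x$ achieve $\nu(z) = \nu_x$, and invoke Lemma \ref{T:Dominant} to pass to the dominantified form $z_{\mathrm{dom}} = t^\gamma u$ with $\gamma$ dominant for Newton-point purposes. Suppose for contradiction that $u \neq 1$. Then Lemma \ref{T:Projection} applied to $z_{\mathrm{dom}}$ produces an index $i$ with $\nu_i(z_{\mathrm{dom}}) \geq \nu(z_{\mathrm{dom}}) = \nu_x$. Proposition \ref{T:QBGpaths}(i) applied to $z$ yields a minimal saturated chain $z \gtrdot \cdots \gtrdot t^\sigma$ with $t^\sigma \leq z \leq x$, and Lemma \ref{T:FiTranslation} at $z$ supplies the strict inequality
\[
F_i(\sigma^+) > F_i(\nu_i(z_{\mathrm{dom}})) \geq F_i(\nu(z)) = F_i(\nu_x).
\]
However, $t^\sigma \leq x$ combined with Theorem \ref{T:Vform} forces $\sigma^+ \leq \nu_x$ in dominance order, which by Lemma \ref{T:FiPos} implies $F_i(\sigma^+) \leq F_i(\nu_x)$---a contradiction. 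Hence $u = 1$, so $\nu_x$ is realized by a pure translation $t^\gamma \leq x$.

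For the second stage, any saturated chain $x \gtrdot \cdots \gtrdot t^\gamma$ may be identified, via iterated application of Proposition \ref{T:cocover} and Remark \ref{T:cocoverQBG}, with a walk in the quantum Bruhat graph from $w^{-1}v$ to $v$ whose cumulative translation correction satisfies $\gamma = \lambda - d$ for the walk weight $d$. By Proposition \ref{T:QBGcomb}(4), the weight of any such walk dominates the minimal weight $d_{\mathrm{min}}$ coefficient-wise in the simple-coroot basis, hence also in dominance order, so $\gamma \leq \lambda - d_{\mathrm{min}} = \mu^+$, giving $\nu_x \leq \mu^+$ as required.

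The principal obstacle is orchestrating Lemma \ref{T:Dominant}, Lemma \ref{T:FiTranslation}, and the chain construction simultaneously at $z$: the dominantification $z_{\mathrm{dom}}$ is not in general $\leq x$, while Lemmas \ref{T:Projection} and \ref{T:FiTranslation} are stated for dominant-translation form; one must verify that the superregularity of $\lambda$ propagates to $\gamma$ through the covers of Proposition \ref{T:cocover} (each of which changes the translation by at most one positive coroot) with sufficient margin against the bound $M_{k_z}$ governing the quantum Bruhat distance at $z$. Managing this propagation, and simultaneously ensuring that the saturated chain produced by Proposition \ref{T:QBGpaths}(i) at $z$ lives in $[1,x]$ so that its endpoint can be pitted against $\nu_x$ via Theorem \ref{T:Vform}, is the delicate technical heart of the argument.
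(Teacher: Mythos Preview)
Your lower bound is fine, and the geometric idea of using $F_i$ to separate $\mu^+$ from $\nu(x)$ is exactly right. The genuine gap is superregularity propagation, and it bites in both stages. In the first stage you apply Lemma~\ref{T:Projection}, Proposition~\ref{T:QBGpaths}(i), and Lemma~\ref{T:FiTranslation} at an arbitrary maximizer $z \leq x$. All three of these results carry a superregularity hypothesis on the translation part of the element to which they are applied, phrased in terms of $M_{k_z}$ for the quantum Bruhat distance $k_z$ at $z$. You only control $\langle \lambda, \alpha_j \rangle > M_k$ at $x$; the element $z$ can sit arbitrarily deep below $x$, so its dominant translation part can be far from superregular, and there is no mechanism to bound $k_z$ in terms of $k$. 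Your final paragraph correctly identifies this as the crux, but the sketch ``each cover changes the translation by at most one positive coroot'' would require a margin proportional to $\ell(x) - \ell(z)$, which $M_k$ does not provide. The second stage has the same defect: to convert an \emph{arbitrary} saturated chain $x \gtrdot \cdots \gtrdot t^\gamma$ into a quantum Bruhat walk you must invoke Proposition~\ref{T:cocover} at every intermediate element, and for a non-minimal chain (which is what you get for a generic translation $t^\gamma \leq x$) the chain length can vastly exceed $k$, so superregularity is lost along the way.

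The paper's remedy is to abandon the direct approach and induct on $k$. One applies Lemma~\ref{T:FiTranslation} only at $x$ itself, where the hypothesis is given, to rule out $\nu_x = \nu(x)$. Every strict $y < x$ is handled by choosing a cocover $x_1'$ with $y \leq x_1' \lessdot x$; the constant $M_k$ is engineered so that the translation part of $x_1'$ still satisfies $\langle \lambda', \alpha_j \rangle > M_{k-1}$, and the inductive hypothesis at $x_1'$ gives $\nu(y) \leq \nu_{x_1'} = \gamma^+$ for some $t^\gamma$ at the bottom of a minimal chain through $x_1'$. Finally, Proposition~\ref{T:QBGcomb}(3) (same weight for all shortest paths) forces all such $\gamma^+$ to coincide with $\mu^+$, so one never needs to compare along long chains or invoke part~(4). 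The induction is precisely what lets the argument stay within one step of $x$ and hence within the superregular regime.
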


\begin{proof}
Note by our superregularity hypothesis on $\lambda \in Q^+$ that $x$ is contained in the  Weyl chamber corresponding to $v \in W$. First consider the case when $k=0$.  Then $x= t^{v\lambda}=t^\mu $ is already a pure translation, and so Proposition \ref{T:NPforW} says that $\nu(x) = \lambda$. We also know by Mazur's inequality that $\nu_x \leq \lambda$.   Theorem \ref{T:Vform} says that $\nu_x = \max \{ \nu(y) \mid y \leq x \}$, and so clearly $\lambda \geq \nu_x \geq \nu(x) = \lambda$, which means that $\nu_x = \lambda = \mu^+$ in this case. 

Now consider any $x = t^{v\lambda} w \in \widetilde{W}$ such that the order of $w$ is nontrivial; \textit{i.e.} $x$ is not a translation. Let $x \gtrdot x_1 \gtrdot \cdots \gtrdot x_k = t^{\mu}$ be any minimal length saturated chain from $x$ to a pure translation.  First note that $k\geq 1$ since $x$ itself is not a translation.  In addition, the hypotheses of Proposition \ref{T:QBGpaths} are met by our superregularity condition on $\lambda$. Therefore, $k$ is also equal to the minimal length of any path in the quantum Bruhat graph for $W$ from $w^{-1}v$ to $v$.

We claim that $\nu(x) \not\geq \mu^+$.  First apply Lemma \ref{T:Projection} to obtain an index $1 \leq i \leq r$ such that $\nu_i(x) = \nu(t^{v\lambda} r_{\beta_i}) \geq \nu(x)$, where we are using the definition of $\nu_i(x)$ from Notation \ref{nu_i}. Of course, if it were the case that $\nu(x) \geq \mu^+$, then $\nu_i(x) \geq \nu(x) \geq \mu^+$ as well. We thus aim to prove instead that $\nu_i(x) \not\geq \mu^+$, from which it follows that $\nu(x) \not\geq \mu^+$.

We proceed to relate the elements $\nu_i(x)$ and $\mu^+$.  Recall Lemma \ref{T:AtiyahBott}, which reinterprets dominance order in terms of convexity.  The point $\nu_i(x)$ sits on the hyperplane $H_{\alpha_i}$, which is a wall of the dominant Weyl chamber.  Since both $\nu_i(x)$ and $\mu^+$ are points in the closed dominant Weyl chamber, then Lemma \ref{T:AtiyahBott} says that $\nu_i(x) \geq \mu^+$ if and only if $\mu^+ \in \operatorname{Conv}(W \nu_i(x))$.  On the other hand, one face of $\operatorname{Conv}(W\nu_i(x))$ which is contained in the dominant Weyl chamber is spanned precisely by the simple roots in $\Delta_P = \Delta \backslash \{ \alpha_i \}$, each shifted by the vector $\nu_i(x)$.  That is, this face of $\operatorname{Conv}(W\nu_i(x))$ is contained in an affine translate of the hyperplane $H_{\widehat{\alpha}_i}$ defined in Lemma \ref{T:FiFormula}, shifted so that $\nu_i(x)$ is the origin.  The hyperplane $H_{\widehat{\alpha}_i} + \nu_i(x)$ is therefore determined precisely by the linear functional $p_i: \R^r \rightarrow \R$ of Lemma \ref{T:FiFormula}, which means that $\vec{v} \in H_{\widehat{\alpha}_i} + \nu_i(x)$ if and only if $p_i(\vec{v}) = p_i(\nu_i(x))$.  In particular, since $\operatorname{Conv}(W\nu_i(x))$ contains the origin, if $p_i(\vec{v}) > p_i(\nu_i(x))$, then $\vec{v} \notin \operatorname{Conv}(W\nu_i(x))$.  Since by hypothesis $\langle \lambda, \alpha_j \rangle > M_k$ for all $\alpha_j \in \Delta$, we can apply Lemma \ref{T:FiTranslation}, which says that $p_i(\mu^+) > p_i(\nu_i(x)),$ and therefore $\mu^+ \notin \operatorname{Conv}(W\nu_i(x))$.  By Lemma \ref{T:AtiyahBott}, we thus have that $\nu_i(x) \not\geq \mu^+$, and hence we can also conclude that $\nu(x) \not\geq \mu^+.$  Moreover, since $\nu_x = \max \{ \nu(y) \mid y \leq x \}$ and $t^\mu \leq x$, we now know that $\nu_x \neq \nu(x)$.

More generally, now suppose that $y=t^{v'\lambda'}w' \leq x$, where $y$ is a non-translation alcove which lies in the $v'$-chamber.  There are two cases to consider, depending on whether or not $\lambda' \in Q^+$ satisfies the superregularity hypothesis $\langle \lambda', \alpha_j \rangle > M_k$ for all $\alpha_j \in \Delta$ required to apply Lemma \ref{T:FiTranslation}.   If $\lambda'$ is indeed superregular in this sense, then consider any minimal length saturated chain of the form $x \gtrdot \cdots \gtrdot y \gtrdot \cdots \gtrdot t^\gamma$. We know by the argument in the previous two paragraphs applied instead to $y$ that $\nu(y) \not\geq \gamma^+ = \nu(t^\gamma)$. But since $t^\gamma \leq x$ and $\nu_x  = \max \{ \nu(y) \mid y \leq x \}$, we conclude that $\nu_x \neq \nu(y)$ for any of these $y$.

Now suppose that $\langle \lambda', \alpha_j \rangle \leq M_k$ for at least one $\alpha_j \in \Delta$, and define the set of indices 
\begin{equation}\label{eq:Isr}
I_{sr} = \{ j \in [r] \mid \langle \lambda', \alpha_j \rangle  \leq M_k\} \neq \emptyset.\end{equation} Similarly, denote by $\Delta_{sr}$ the subset of simple roots $\Delta_{sr} = \{ \alpha_j \in \Delta \mid j \in I_{sr}\}$ with respect to which $\lambda'$ fails to meet this superregularity hypothesis.
We first discuss the case in which $I_{sr}= [r]$.  Since $\langle \rho^\vee, \alpha_j \rangle =1$ for all $\alpha_j \in \Delta$, we thus have $\lambda' \leq M_k\rho^\vee$ in this case. By Mazur's inequality \eqref{E:Mazur}, we know that $\nu(y) \leq \lambda' \leq M_k\rho^\vee$.  Recall that $x \gtrdot x_1 \gtrdot \cdots \gtrdot x_k = t^{\mu}$ is a saturated chain of length $k$, where $t^{\mu} = t^{u(\mu^+)}$ for  some $u \in W$.  By Proposition \ref{T:QBGpaths}, we know that $\mu^+ \geq \lambda - k\tilde{\mu}$.  Recalling that $k \leq \ell(w_0)$ by Lemma \ref{T:QBGcomb}, we compute that  for any $\alpha_j \in \Delta$, we have
\begin{equation}
\langle \mu^+, \alpha_j \rangle \geq \langle \lambda, \alpha_j \rangle - k\langle \tilde{\mu}, \alpha_j \rangle > M_k+4\ell(w_0) - 2k \geq M_k + 2\ell(w_0)>M_k,
\end{equation}
where we have calculated using the values of $\tilde{\mu}$ from Table \ref{table:HighestCoroot} that $\langle \tilde{\mu}, \alpha_j \rangle \leq 2$. In particular, we see that $\langle \mu^+, \alpha_j \rangle > M_k \geq 3$ for all $j \in [r]$, and more generally that $\mu^+>M_k\rho^\vee$.  Since $M_k > 1$ for any value of $k$, we also know that $\rho^\vee \not> M_k \rho^\vee$.  Therefore, Corollary \ref{cor:DomBruhat} says that $t^{u(\mu^+)} > t^{u(M_k\rho^\vee)}$, in which case $ t^{u(M_k\rho^\vee)} < t^\mu \leq x$.  Since $M_k\rho^\vee \in Q^+$, then $\nu(t^{u(M_k\rho^\vee)})=M_k\rho^\vee$ by Proposition \ref{T:NPforW}, and so Theorem \ref{T:Vform} implies that $\nu_x = \max \{ \nu(y) \mid y \leq x \} > M_k\rho^\vee$.  Since $\nu(y) \leq M_k \rho^\vee < \nu_x$ in this case, we conclude that $\nu_x \neq \nu(y)$ whenever $I_{sr} = [r]$.

From now on, we thus assume that $y=t^{v'\lambda'}w' \leq x$, where $\emptyset \subsetneq I_{sr} \subsetneq [r]$. Recall that $x=t^{v\lambda}w$, where $\langle \lambda, \alpha_j \rangle > M = M_k + 4\ell(w_0)$.  Since $M_k \geq 2\ell(w_0)$ by \eqref{E:Mkformula}, then $\langle \lambda, \alpha_j \rangle > 6\ell(w_0)$ for all $\alpha_j \in \Delta$.
 Applying Lemma \ref{lem:MaxFinPtw} to $x$ with the pair of elements $v', w' \in W$ associated to $y$, there exists a unique maximum, say $\lambda'_{x} \in Q^+$, such that $y_{x} := t^{v'\lambda'_{x}}w' \leq x$. In particular, the element $y_{x}$ lies in the same Weyl chamber and has the same finite part as $y$, in addition to the fact that $\lambda'_{x} \geq \lambda'$.
We now claim that the element $\lambda'_{x}$ is superregular in the sense of Lemma \ref{T:FiTranslation}.  To see this, recall from \eqref{eq:lam'bound} that $\lambda'_{x} \geq \lambda - 2\ell(w_0)\tilde{\mu}$, in which case for all $\alpha_j \in \Delta$, we have
\begin{equation}\label{eq:lamxsr}
\langle \lambda'_{x}, \alpha_j \rangle \geq \langle \lambda - 2\ell(w_0)\tilde{\mu}, \alpha_j \rangle > M - 2\ell(w_0)\langle \tilde{\mu}, \alpha_j \rangle \geq (M_k + 4\ell(w_0)) - (2\ell(w_0)\cdot 2) = M_k.
\end{equation}
Therefore,  we indeed have $\langle \lambda'_{x}, \alpha_j \rangle > M_k$ for all $\alpha_j \in \Delta$.

We proceed to relate $\lambda'_x$ and $\lambda'$ more explicitly.  Since $\lambda'_x \geq \lambda'$, we know by the definition that 
\begin{equation}\label{eq:lam'lamx}
\lambda' = \lambda'_x - \sum\limits_{i =1}^r m_i \alpha_i^\vee
\end{equation}
for some $m_i \in \Z_{\geq 0}$.  Now fix any $j \in I_{sr}$, and recall by \eqref{eq:Isr} that $\langle \lambda', \alpha_j \rangle \leq M_k$.  Using \eqref{eq:lam'lamx}, we then see that 
\begin{equation}\label{eq:lam'sr}
\left\langle \lambda', \alpha_j \right\rangle = \left\langle \lambda'_x - \sum\limits_{i=1}^r m_i \alpha_i^\vee, \alpha_j \right\rangle = \langle \lambda'_x, \alpha_j \rangle - \sum\limits_{i=1}^r m_i\left\langle \alpha_i^\vee, \alpha_j \right\rangle.
\end{equation}
Suppose for a contradiction that $m_j = 0$.  Since for any $i \neq j$, the value $\langle \alpha_i^\vee, \alpha_j \rangle$ is an off-diagonal entry of the Cartan matrix, then $\langle \alpha_i^\vee, \alpha_j \rangle \leq 0$ for all $i\neq j$.  Since $m_i \geq 0$, the entire sum on the right side of \eqref{eq:lam'sr} would then be nonpositive.  Therefore, $\langle \lambda', \alpha_j \rangle = \langle \lambda'_x, \alpha_j \rangle + N$ for some $N \geq 0$.  By the superregularity of $\lambda'_x$ proved in \eqref{eq:lamxsr}, we then have $\langle \lambda', \alpha_j \rangle > M_k + N \geq M_k$, contradicting the fact that $j \in I_{sr}$.  Therefore, for all $j \in I_{sr}$, we have $m_j \in \Z_{>0}$ in expression \eqref{eq:lam'lamx} for $\lambda'$.

We continue by translating the formula for the Newton point $\nu(y)$ via Proposition \ref{T:NPforW} into more convenient geometric language. Recall from the proof of Lemma \ref{T:Projection} that given $y = t^{v'\lambda'}w'$, we define $o(y) = \sum\limits_{i=1}^{|w'|} (w')^i(v'\lambda')$.  Denote by $R_{w'} \subseteq R^+$ the collection of positive roots such that $o(y) \in \bigcap\limits_{\beta \in R_{w'}} H_{\beta}$.   For brevity, denote this subspace by $H_{w'} = \bigcap\limits_{\beta \in R_{w'}} H_{\beta}$, and then denote by $\operatorname{pr}_{w'}: \R^r \longrightarrow H_{w'}$ the projection onto $H_{w'}$.  Note that $\operatorname{pr}_{w'}(v'\lambda') = \frac{1}{|w'|}o(y)$, and therefore $\nu(y) = \left(\frac{1}{|w'|}o(y)\right)^+ = \left( \operatorname{pr}_{w'}(v'\lambda') \right)^+$ by Proposition \ref{T:NPforW}.  Choose any $u \in W$ such that 
\begin{equation}\label{eq:uchoice}
\nu(y) = \operatorname{pr}_{w'}(v'\lambda')^+ = u(\operatorname{pr}_{w'}(v'\lambda')) \in C.\end{equation}
 This element $u \in W$ will remain fixed throughout the rest of the proof. 

There are now two further cases to consider.  First suppose there exists an index $j \in I_{sr}$ such that $u\left(\operatorname{pr}_{w'}(v'\alpha_j^\vee) \right) \not\leq 0.$  Interpreting antidominance in terms of Lemma \ref{T:FiFormula}, there exists an $i \in [r]$ such that $p_i \left( u\left(\operatorname{pr}_{w'}(v'\alpha_j^\vee) \right)\right) >0$. 
 Recall that since $j \in I_{sr}$, the coefficient $m_j \geq 1$ in expression \eqref{eq:lam'lamx}, in which case $\lambda'_x \geq \lambda'+\alpha_j^\vee$.   
Define an auxiliary element $y' = t^{v'(\lambda'+\alpha_j^\vee)}w'$, which lies in the $v'$-chamber together with $y_x$. We have $\rho^\vee \not> \lambda' + \alpha_j^\vee$, since otherwise $M_k \rho^\vee > \lambda' +\alpha_j^\vee > \lambda'$, contradicting the fact that $\lambda' \not\leq M_k\rho^\vee$ since $\emptyset \subsetneq I_{sr} \subsetneq [r]$. In addition, we have already seen that $\langle \lambda'_x, \alpha_i \rangle> M_k \geq 3$ for all $\alpha_i \in \Delta$.  Therefore, Corollary \ref{cor:DomBruhat} applies, in which case we conclude that $y' \leq y_x$. Recalling that $y_x \leq x$ by Lemma \ref{lem:MaxFinPtw}, then $y' \leq y_x \leq x$ as well. 
 Now compute 
\begin{equation}
\nu(y') = u\left(\operatorname{pr}_{w'}(v'(\lambda' +\alpha_j^\vee)) \right) = u\left(\operatorname{pr}_{w'}(v'\lambda')\right) + u\left( \operatorname{pr}_{w'}(v'\alpha_j^\vee) \right) = \nu(y) + u\left( \operatorname{pr}_{w'}(v'\alpha_j^\vee) \right).
\end{equation}
Applying the linear functional $p_i$ from Lemma \ref{T:FiFormula} then gives
\begin{equation}
p_i(\nu(y'))  = p_i(\nu(y)) + p_i \left( u\left( \operatorname{pr}_{w'}(v'\alpha_j^\vee) \right) \right).
\end{equation}
Since $p_i \left( u\left( \operatorname{pr}_{w'}(v'\alpha_j^\vee) \right) \right) > 0$ by hypothesis, we thus see that $p_i(\nu(y')) > p_i(\nu(y))$, in which case $\nu(y) \not\geq \nu(y')$ by Lemmas \ref{T:AtiyahBott} and \ref{T:FiFormula}.  Since $y' \leq x$ and $\nu_x  = \max \{ \nu(y) \mid y \leq x \}$, we conclude that $\nu_x \neq \nu(y)$ in this case.

It thus remains to consider the case in which $u\left(\operatorname{pr}_{w'}(v'\alpha_j^\vee) \right) \leq 0$ for all $j \in I_{sr}$.
We proceed to construct two local translates of the alcove $y=t^{v'\lambda'}w'$ whose Newton points will be suitable for comparing to $\nu(y)$ in this final case.  For this purpose, we first refine our expression for $\lambda'$ from above as follows:
\begin{equation}\label{eq:lam'srsum}
\lambda' = \sum\limits_{i \notin I_{sr}} a_i\alpha_i^\vee + \sum\limits_{j \in I_{sr}}a_j\alpha_j^\vee.
\end{equation}
Denote by $H_{sh} = \bigcap\limits_{j \in I_{sr}} H_{\alpha_j}$ the intersection of all hyperplanes with respect to which $\lambda'$ fails to meet the superregularity hypothesis, and denote by $\operatorname{pr}_{sh}: \R^r \longrightarrow H_{sh}$ the projection onto this subspace.  Now define $\lambda'_{sh} = \operatorname{pr}_{sh}(\lambda') \in H_{sh}$, and note that $\lambda'_{sh} \in (Q^\vee \otimes_{\Z} \Q)^+$ since the projection to $H_{sh}$ is obtained by subtracting a $\Q$-linear combination of simple coroots in $\Delta^\vee_{sr}$. Write
\begin{equation}\label{eq:lam'sh}
\lambda'_{sh} = \sum\limits_{i \notin I_{sr}} a_i\alpha_i^\vee + \sum\limits_{j \in I_{sr}}b_j\alpha_j^\vee,
\end{equation}
where $b_j \in \Q_{\geq 0}$, and  for all $j \in I_{sr}$, by comparing the second sums in \eqref{eq:lam'srsum} and \eqref{eq:lam'sh}, we have $a_j \geq b_j$ . Define 
\begin{equation}\label{eq:lamsh}
\lambda_{sh} = \sum\limits_{i \notin I_{sr}} a_i\alpha_i^\vee + \sum\limits_{j \in I_{sr}}\lceil b_j \rceil \alpha_j^\vee \in Q^\vee.
\end{equation}
Thus by definition, $\lambda' \geq \lambda_{sh}$, and of course $\lambda'=\lambda_{sh}$ is also possible. 
Let $y_{sh} = t^{v'(\lambda_{sh})}w'$, and note by construction that the element $y_{sh}$ is an alcove  ``close'' to $y$ whose coroot lattice point lies in the same Weyl chamber as that of $y$, but is no longer ``shrunken'' in the sense of \cite{Reu}, inspiring the choice of notation.

In the other direction, the final translate of $y$ will be superregular in the sense of Lemma \ref{T:FiTranslation}. The details are somewhat technical due to the discrepency between $P^\vee$ and $Q^\vee$, but the idea is straightforward: we will construct this new alcove by translating the element $\lambda_{sh} \in Q^\vee$ defined in \eqref{eq:lamsh}  by a prescribed element of $Q^\vee$, such that the new element $\lambda_{sr}$ satisfies $\langle \lambda_{sr}, \alpha_j \rangle > M_k$ for all $j \in I_{sr}$, but barely so.  To make this precise, first note that for all $j \in I_{sr}$, the element $\lambda_{sh} \in Q^\vee$ satisfies $\lambda_{sh} \in H_{\alpha_j, n_j}$ where $n_j = 0$ or $1$, depending on whether the coefficient $b_j$ from \eqref{eq:lam'sh} is an element of $\Z$ or $\Q$, respectively.  We partition the set $I_{sr} = N_0 \sqcup N_1$ to distinguish these two cases:
\begin{equation}
N_0 =\{ j \in I_{sr} \mid n_j=0\} \quad \text{and} \quad N_1 = \{ j \in I_{sr} \mid n_j =1\}.
\end{equation}
Therefore, the second sum in the expression for $\lambda_{sh}' = \operatorname{pr}_{sh}(\lambda')$ in \eqref{eq:lam'sh} can be refined as
\begin{equation}
\sum\limits_{j \in I_{sr}} b_j \alpha_j^\vee = \sum\limits_{j \in N_0}b_j \alpha_j^\vee + \sum\limits_{j \in N_1}b_j \alpha_j^\vee.
\end{equation}
For $l \in \{0,1\}$, denote by $\rho^\vee_{N_l} = \sum\limits_{j \in N_l} \omega^\vee_j \in P^\vee$ the dominant coweight which satisfies $\langle  \rho^\vee_{N_l}, \alpha_j \rangle = 1$ for all $j \in N_l$.   
Now define 
\begin{equation}
\rho^\vee_M = \left(M_k+1 \right)\rho^\vee_{N_0} +  M_k\rho^\vee_{N_1},
\end{equation}
in which case $\rho^\vee_M \in \bigcap\limits_{j \in I_{sr}} H_{\alpha_j, M_k+1}$ by construction.
In the expansion for $\rho^\vee_M = \sum\limits_{j \in I_{sr}}r_j \alpha_j^\vee \in P^\vee$ in terms of simple coroots, the $r_j \in \Q$ are determined by the inverse Cartan matrix.  We thus define 
\begin{equation}\label{eq:lamsr}
\lambda_{sr} = \sum\limits_{i \notin I_{sr}}a_i \alpha^\vee_i + \sum\limits_{j \in I_{sr}} \lceil r_j \rceil \alpha_j^\vee \in Q^\vee,
\end{equation}
and note that $\lambda_{sr} > \lambda'$, comparing \eqref{eq:lam'srsum}. By construction, $\lambda_{sr}$ is the minimum element of $Q^\vee$ which satisfies both $\lambda_{sr} > \lambda'$ and $\langle \lambda_{sr}, \alpha_j \rangle > M_k$ for all $\alpha_j \in \Delta$. Since we have shown above that $\lambda'_x$ also satisfies $\langle \lambda'_x, \alpha_j \rangle > M_k$ for all $\alpha_j \in \Delta$, then Lemma \ref{lem:MaxFinPtw} implies that $\lambda'_x \geq \lambda_{sr}$, 
where of course $\lambda'_x = \lambda_{sr}$ is also possible. Let $y_{sr} = t^{v'(\lambda_{sr})}w'$, which lies in the $v'$-chamber, and note that clearly $\rho^\vee \not> \lambda_{sr}$. Therefore, Corollary \ref{cor:DomBruhat} says that $y_{sr} \leq y_x \leq x$ as well.

Having identified an alcove $y_{sr} \leq x$ which is a local translate of $y$ satisfying the superregularity hypothesis required to apply Lemma \ref{T:FiTranslation}, our next step will be to relate the Newton points $\nu(y_{sr})$ and $\nu(y)$ using this lemma.  Choose any $i \in [r]$ such that $\nu(y_{sr}) \in H_{\alpha_i}$, and define $z = t^{v'(\lambda_{sr})}r_{v'\alpha_i}$.  Since $y$ is not a translation, we know $w' \neq 1,$ and so by Lemma \ref{T:Projection}, we have $\nu(z) = \nu_i(y_{sr}) \geq \nu(y_{sr})$.  In addition, for any minimal length saturated chain $y_{sr} \gtrdot \cdots \gtrdot t^{\gamma}$, Lemma \ref{T:FiTranslation} says that $p_i(\gamma^+) > p_i(\nu_i(y_{sr}))$.  Thus far, we have shown that
 \begin{equation}\label{E:initialineqs}
p_i(\gamma^+) > p_i(\nu_i(y_{sr}))  = p_i \left( \nu(z)\right).
 \end{equation}  
 Analogously, next define $z_{sh} = t^{v'(\lambda_{sh})}r_{v'\alpha_i}$.
  By Lemma \ref{T:Dominant}, we have $\nu(z) = \nu(z^d)$ and $\nu(z_{sh}) = \nu(z_{sh}^d)$, where $z^d =  t^{\lambda_{sr}}r_{\alpha_i}$ and $z_{sh}^d = t^{\lambda_{sh}}r_{\alpha_i}$ are based in the closed dominant Weyl chamber $C$.  
   By Proposition \ref{T:NPforW}, the Newton points $\nu(z^d)$ and $\nu(z_{sh}^d)$ are obtained by orthogonally projecting the vectors $\lambda_{sr}$ and $\lambda_{sh}$, respectively, onto the wall $H_{\alpha_i}$.  In particular, the points $\nu(z^d)$ and $\nu(z_{sh}^d)$ lie on the boundaries of $\operatorname{Conv}(W\lambda_{sr}) \cap C$ and $\operatorname{Conv}(W\lambda_{sh}) \cap C$, respectively.  Since $\lambda_{sr} \geq \lambda_{sh}$ by definition, Lemma \ref{T:AtiyahBott} implies that $\nu(z^d) \geq \nu(z_{sh}^d)$.
Therefore, 
\begin{equation}\label{E:shz}
\nu(z) = \nu(z^d) \geq \nu(z_{sh}^d) =  \nu(z_{sh}).
\end{equation}
By \eqref{eq:lamsh} and \eqref{eq:lamsr}, we see that $\lambda_{sr} - \lambda_{sh} = \sum f_j \alpha_j^\vee$ where $f_j \in \Z_{\geq 0},$ and $f_j \neq 0$ if and only if $j \in I_{sr} \neq [r]$. The fact that $\nu(y_{sr}) \in H_{\alpha_i}$ then implies that $\nu(y_{sh}) \in H_{\alpha_i}$ as well. Therefore, by Lemma \ref{T:Projection},
\begin{equation}\label{E:samenui}
\nu(z_{sh}) = \nu_i(y_{sh}) \geq \nu(y_{sh}).
\end{equation}

More generally, for any $j \in I_{sr}$, we have 
\begin{equation}
u\left(\operatorname{pr}_{w'}(v'(\lambda_{sr} - m\alpha_j^\vee)) \right)\in C
\end{equation}
for all integers $0 \leq m \leq \left\lfloor \frac{\langle \lambda_{sr}, \alpha_j \rangle}{2}\right\rfloor$.
In particular, each of the vectors  $u(\operatorname{pr}_{w'}(v'\lambda')), u(\operatorname{pr}_{w'}(v'\lambda_{sr})),$ and $u(\operatorname{pr}_{w'}(v'\lambda_{sh}))$ is dominant for the same choice of $u \in W$ made in \eqref{eq:uchoice}.     
Recalling that $\lambda' \geq \lambda_{sh}$, comparing \eqref{eq:lam'srsum} and \eqref{eq:lamsh} says that $\lambda' = \lambda_{sh} + \sum\limits_{j \in I_{sr}} c_j \alpha_j^\vee$ for some $c_j \in \Z_{\geq 0}$. 
Therefore, 
 \begin{equation}
 \nu(y) = u(\operatorname{pr}_{w'}(v'\lambda')) = u(\operatorname{pr}_{w'}(v'\lambda_{sh})) + \sum\limits_{j \in I_{sr}} c_j\left( u\left( \operatorname{pr}_{w'} \left( v' \alpha_j^\vee \right) \right)\right).
 \end{equation}
Since $u\left(\operatorname{pr}_{w'}(v'\alpha_j^\vee) \right) \leq 0$ for all $j \in I_{sr}$ by hypothesis, we then have 
\begin{equation}
\nu(y) = \nu(y_{sh}) - \sum\limits_{\ell \in [r]} d_\ell \alpha_\ell^\vee,
\end{equation}
 where $d_\ell \in \Z_{\geq 0}$.  Therefore, $\nu(y_{sh}) \geq \nu(y)$ in this case.  Combining this last observation with the inequalities from \eqref{E:initialineqs}, \eqref{E:shz}, and \eqref{E:samenui},  we have 
 \begin{equation}
p_i(\gamma^+) > p_i(\nu_i(y_{sr}))  = p_i \left( \nu(z)\right) \geq p_i(\nu(z_{sh})) = p_i(\nu_i(y_{sh})) \geq p_i(\nu(y_{sh})) \geq p_i(\nu(y)).
 \end{equation}
The fact that $p_i(\gamma^+) > p_i(\nu(y))$ thus implies by Lemmas \ref{T:AtiyahBott} and \ref{T:FiFormula} that $\nu(y) \not\geq \nu(t^\gamma)$.  Since $t^\gamma \leq y_{sr} \leq x$ by construction and $\nu_x = \max \{\nu(y) \mid y \leq x\}$ by Theorem \ref{T:Vform}, we conclude that $\nu_x \neq \nu(y)$ in this final case.

Altogether, we have thus proved that if $y \leq x$ is not a translation, then $\nu_x \neq \nu(y)$.  Therefore, by Theorem \ref{T:Vform}, we see that 
\begin{equation}\nu_x = \max \{ \gamma^+ \mid t^\gamma \leq x\}.\end{equation}
From Proposition \ref{T:QBGpaths}, we also know that the translation part of the elements in a saturated chain of the form $x \gtrdot \cdots \gtrdot y \gtrdot \cdots \gtrdot t^\gamma$ weakly decreases in dominance order.  Therefore, we can reduce to considering only those saturated chains $x \gtrdot x_1 \gtrdot \cdots \gtrdot x_k = t^\mu$ of minimal length.

On the other hand, by Proposition \ref{T:QBGcomb}, we know that all shortest paths between any pair of elements in the quantum Bruhat graph have the same weight.  Applying Propositions \ref{T:QBGcomb} and \ref{T:QBGpaths}, if there are saturated chains of minimal length from $x$ to both $t^\mu$ and $t^\eta$, then in fact $\mu^+ = \eta^+$.  It thus suffices to choose any of these translations in order to conclude that $\nu_x = \mu^+$.
\end{proof}

\end{subsection}

\begin{subsection}{Proof of the main theorem}\label{S:MainPf}

We are finally prepared to complete the proof of our main result, and so we provide a brief reminder of the theorem statement.  Given an affine Weyl group element $x = t^{v\lambda}w$, we start by considering any path of minimal length $k$ from $w^{-1}v$ to $v$ in the quantum Bruhat graph for $W$.  Under the superregularity hypothesis $\langle \lambda, \alpha_j \rangle > M$ for all simple roots $\alpha_j \in \Delta$, we must prove that the maximum Newton point in $\mathcal{N}(G)_x$ equals $\nu_x = \lambda - \alpha^{\vee}_x,$ where $\alpha^{\vee}_x$ is the weight of the chosen path from $w^{-1}v$ to $v$.  The proof now follows directly from the two key Propositions \ref{T:QBGpaths} and \ref{T:trans}.

\begin{proof}[Proof of Theorem \ref{T:main}]
First note that the superregularity hypothesis in Theorem \ref{T:main} implies the hypotheses required to apply Proposition \ref{T:QBGpaths}.  Use Proposition \ref{T:QBGpaths} ($ii$) to say that the chosen path of minimal length $k$ from $w^{-1}v$ to $v$ corresponds to $2^k$ different minimal length saturated chains in Bruhat order of the form $x \gtrdot x_1 \gtrdot \cdots \gtrdot x_k = t^\mu$.  Choosing any of these chains, we can now directly apply Proposition \ref{T:trans}, which says that $\nu_x = \mu^+$.  To find the value of $\mu^+$, recall from Proposition \ref{T:QBGpaths} that the difference between $\lambda$ and $\mu^+$ corresponds precisely to the sum of the coroots indexing the downward edges in the path from $w^{-1}v$ to $v$.  The weight of a path in the quantum Bruhat graph is defined by summing exactly those coroots labeling the downward edges in the path, and therefore $\mu^+ = \lambda - \alpha_x^\vee$, as desired.
\end{proof}

\end{subsection}

\subsection{Remarks on superregularity}\label{S:HypRmks}

Having proved our main result, we conclude with some remarks about each step in the proof which involves a superregularity hypothesis.  For the reader interested in using Theorem \ref{T:main} to compute smaller examples, we point out several ways in which one expects savings from the bound $\langle \lambda, \alpha_j \rangle > M$ in practice.  For clarity of the exposition, we restrict ourselves to the case $G\neq G_2$, although all comments apply equally to $G=G_2$ with the constants appropriately adjusted.

The first main superregularity hypothesis is introduced in Proposition \ref{T:cocover}, which requires that $\langle \lambda, \alpha_j \rangle > 2\ell(w_0)+2$. The coefficient is forced by noting that the maximum value of $\langle \alpha^\vee, \alpha_j \rangle$ for any $\alpha \in R^+$ and $\alpha_j \in \Delta$ is attained already in the important special case $\alpha = \alpha_j$, but the factor of $\ell(w_0)+1$ is a softer bound.  The basic principle of the proof uses $\ell(t^\lambda)$ as a proxy for $\ell(x)$ when $x=t^{v\lambda}w$, but this estimate is off by exactly $\ell(w^{-1}v)-\ell(v)$ for regular $\lambda$.  Since this difference can actually equal $\ell(w_0)$ in very special cases, this constant is forced by the desire to have a uniform proof for all pairs $w,v \in W$, in addition to the hypothesis from Lemma \ref{T:xlength} that $\lambda$ is regular.  However, in practice this bound is obviously much larger than necessary.

The subsequent need to iterate Proposition \ref{T:cocover} introduces yet an another superregularity criterion.  This additional linear term of $2k$ comes about from the fact that the proofs of the two main Propositions \ref{T:QBGpaths} and \ref{T:trans} iteratively apply Proposition \ref{T:cocover} exactly $k$ times.  However, the assumption made in those proofs, which is once again stronger than necessary, is that \emph{every} step in the chosen path from $w^{-1}v$ to $v$ is a downward edge.  In such circumstances, one does indeed subtract a coroot from $\lambda$ at each step, but in practice these are again fairly special cases.

The superregularity criterion $\langle \lambda, \alpha_j \rangle > 2kc_G$ is then introduced in the course of Lemma \ref{T:FiTranslation}.  This additional hypothesis comes about as a result of using the worst-case-scenario upper bound for the weight of a path of minimal length $k$ from $w^{-1}v$ to $v$ in the quantum Bruhat graph, namely $k\tilde{\mu}$.  In practice, this is an egregious bound, and the reader more well-versed in the combinatorics of the quantum Bruhat graph can quite likely propose a sharp(er) upper bound on the weight of such a path.  The analysis provided in Equation \eqref{E:Mk} shows, however, that the condition $\langle \lambda, \alpha_j \rangle > 2kc_G$ is \emph{barely} stronger than the superregularity hypotheses already required by other components of the proof.  For example, if $G$ is classical, then the condition $\langle \lambda, \alpha_j\rangle > 2kc_G$ is automatically implied by the superregularity hypothesis required by Proposition \ref{T:QBGpaths}.  Therefore, in practice one can completely ignore this additional factor of $2kc_G$, unless one is truly interested in computing examples in the exceptional groups.  Moreover, we point out that our request that $\langle \lambda, \alpha_j \rangle > 2kc_G$ for \emph{all} $\alpha_j \in \Delta$ is overly cautious.  In any given example, one need only be concerned with the roots $\alpha_j$ which actually \emph{occur} as labels on the edges in the path from $w^{-1}v$ to $v$.  In fact, Lemma \ref{T:FiTranslation} only requires the additional superregularity hypothesis $\langle \lambda, \alpha_i \rangle > 2kc_G$ for the \emph{single} root direction associated to $\nu_i(x)$.

One final strengthening of the superregularity hypothesis arises in a special case in the proof of Proposition \ref{T:trans}.  The additional factor of $4\ell(w_0)$ beyond the constant $M_k$ occurs when the coroot $\lambda'$ associated to the element $y \leq x$ fails to be superregular in at least one root direction, and the Newton points \emph{increase} as you shift toward any of these corresponding walls of the Weyl chamber containing $y$.  The key idea in this case is to shift the alcove $y$ deeper into the Weyl chamber so that the superregularity hypothesis required to apply Lemma \ref{T:FiTranslation} is satisfied.  In order for this shifted alcove $y_{sr}$ to provide a useful comparison, we must have $y_{sr} \leq x$ as well.  The additional hypothesis required to shift $y$ deeper into the Weyl chamber while satisfying $y_{sr} \leq x$ is determined by using $\ell(w_0)\tilde{\mu}$ in the proof of Lemma \ref{lem:MaxFinPtw} as an egregious universal upper bound on the relationship between $x$ and the nearest alcove below $x$ in Bruhat order which has the same finite part as $y$.  The quantity $\ell(w_0) \tilde{\mu}$ then further introduces a factor of the maximum value for $\langle \tilde{\mu}, \alpha_j \rangle$ into the new superregularity hypothesis.  This maximum equals 2, but is only attained in type $B_r$; in all other types, $\langle \tilde{\mu}, \alpha_j \rangle \in \{0,1\}$, in which case $\langle \tilde{\mu}, \alpha_j \rangle = 0$ for all but one simple root $\alpha_j$ (two in type $A_r$).

To provide a concrete example illustrating this discussion, if $G=SL_3$ and one traces each of these comments through, one concludes that $\langle \lambda, \alpha_j \rangle > 4$ for $j\in \{1,2\}$ is the sharpest possible superregularity hypothesis required to execute the proof for $G=SL_3$; compare the values of $M \in \{20, 22, 24\}$ for $k \in \{1,2, 3\}$, respectively.  In fact, by comparing \cite{Be1}, one sees that the formula provided in Theorem \ref{T:main} is in fact correct under the weaker hypothesis $\langle \lambda, \alpha_j \rangle >1$ for $j \in \{1,2\}$.  Conversely, the statement of Theorem \ref{T:main} does sometimes fail if $\langle \lambda, \alpha_j \rangle \in \{0,1\}$ for at least one value of $j$.  Although we are not yet prepared to formulate a precise conjecture, it is highly probable that the weakest possible hypothesis one can place on the main theorem is that $x$ lie in the ``shrunken'' Weyl chambers, which is a condition that exactly excludes alcoves which sit between the hyperplanes $H_{\alpha_j,0}$ and $H_{\alpha_j,1}$ for any $\alpha_j \in \Delta$.  See the discussion following the proof of Proposition 4.4 in \cite{LS} for additional evidence in this direction.

\bibliographystyle{alpha}
\bibliography{references}

\end{document}